\numberwithin{equation}{section}
\DeclareMathOperator{\dist}{dis}
\DeclareMathOperator{\Bog}{Bog}
\DeclareMathOperator{\Div}{div}
\newcommand{\R}{\mathbb R}
\newcommand{\N}{\mathbb N}
\newcommand{\dd}{\mathrm d}
\newcommand{\dt}{\,\mathrm{d} t}
\renewcommand{\bfB}{B}
\newcommand{\D}{\mathrm{D}}
\newcommand{\dx}{\,\mathrm{d}x}
\newcommand{\dy}{\,\mathrm{d}y}
\newcommand{\dxt}{\,\mathrm{d}x\,\mathrm{d}t}
\newcommand{\ds}{\,\mathrm{d}\sigma}
\newcommand{\dxs}{\,\mathrm{d}x\,\mathrm{d}\sigma}
\newcommand{\Oeta}{\Omega_\eta}
\newtheorem{theorem}{Theorem}[section]
\newtheorem{lemma}[theorem]{Lemma}
\newtheorem{proposition}[theorem]{Proposition}
\newtheorem{remark}[theorem]{Remark}
\theoremstyle{definition}
\newtheorem{definition}[theorem]{Definition}
\begin{document}

\title[Regularity for 2D FSI]
{Regularity results in 2D fluid-structure interaction}

\author{Dominic Breit}
\address{Department of Mathematics, Heriot-Watt University, Edinburgh, EH14 4AS, United Kingdom}
\email{d.breit@hw.ac.uk}
\address{Institute of Mathematics, TU Clausthal, Erzstra\ss e 1, 38678 Clausthal-Zellerfeld, Germany}
\email{dominic.breit@tu-clausthal.de}


\subjclass[2020]{35B65,35Q30,74F10,74K25,76D03,}

\date{\today}


\keywords{Incompressible Navier--Stokes system, Viscoelastic beam equation, Fluid-Structure interaction, Strong solutions, Maximal regularity theory, irregular domains,}

\begin{abstract}
We study the interaction of an incompressible fluid
in two dimensions with an elastic structure yielding the moving boundary of the physical domain. The displacement of the structure is described by a linear viscoelastic beam equation.
Our main result is the existence of a unique global strong solution.
Previously, only the ideal case of a flat reference geometry was considered such that the structure can only move in vertical direction. We allow for a general geometric set-up, where the structure can even occupy the complete boundary.

Our main tool -- being of independent interest -- is a maximal regularity estimate for the steady Stokes system in domains with minimal boundary regularity. In particular, we can control the velocity in $W^{2,2}$ in terms of a forcing in $L^2$ provided the boundary belongs roughly to $W^{3/2,2}$.
 This is applied to the momentum equation in the moving domain (for a fixed time) with the material derivative as right-hand side. Since the moving boundary belongs a priori only to the class $W^{2,2}$, known results do not apply here as they require a $C^2$-boundary.
 

\end{abstract}

\maketitle

\section{Introduction}

\subsection{The fluid-structure interaction problem}
The interactions of fluids with elastic structures are important for many applications
ranging from hydro- and aero-elasticity over bio-mechanics to hydrodynamics. We are interested in the case, where a viscous incompressible fluid interacts with a flexible shell which is located at one part of the boundary (or even describes the complete boundary) of the underlying domain $\Omega\subset\R^2$ denoted by $\omega$. The shell, described by a function $\eta:(0,T)\times\omega\rightarrow\R$, reacts to the surface forces induced by the fluid
and deforms the domain $\Omega$ to $\Omega_{\eta(t)}$, where the function $\bfvarphi_{\eta(t)}$ describes the coordinate transform (see figures 1 and 2 below) and $\bfn_\eta$ is the normal at the deformed boundary.

The motion of the fluid is governed by the Navier--Stokes equations
 \begin{align} \varrho_f\big(\partial_t \bfu+(\bfu\cdot\nabla)\bfu\big)&=\mu\Delta\bfu-\nabla \pi+\bff,\quad \Div\bfu=0,&\label{1}
 \end{align}
 in the moving domain $\Omega_\eta$, where $\bfu:(0,T)\times\Omega_\eta\rightarrow\R^2$ is the velocity field and $\pi:(0,T)\times\Omega_\eta\rightarrow\R$ the pressure function. The function $\bff:(0,T)\times\Omega_\eta\rightarrow\R^2$ is a given volume force. The equations are
 supplemented with initial conditions
 and the boundary condition $\bfu\circ \bfvarphi_\eta=\partial_t\eta\bfn$ at the flexible part of the boundary with normal $\bfn$.
 There exist various models in literature to model the behaviour of the shell and a typical example is given by
 \begin{align}\label{2}
\varrho_s \partial_t^2\eta-\gamma\partial_t\partial_y^2\eta+\alpha\partial_y^4\eta=g-\bfn \bftau\circ\bfvarphi_\eta\bfn_\eta\,\dd y_{\bfn_\eta}
 \end{align}
on $\omega$ supplemented with initial and boundary conditions. Here $\varrho_s,\gamma$ and $\alpha$ are positive constants and the function $g:(0,T)\times \omega\rightarrow$ is a given forcing term. Here $\bftau$ denotes the Cauchy stress of the fluid given by Newton's rheological law, that is
$\bftau=\mu\big(\nabla\bfu+\nabla\bfu^\top\big)-\pi\mathbb I_{2\times 2}$. The model \eqref{1}--\eqref{2} has been suggested, in particular, for blood vessels (where the 2D geometry is often sufficient), see \cite{Q1,Q2}. \\
There exists already results concerning the existence of local-in-time strong solutions to the coupled system \eqref{1}--\eqref{2}, see \cite{CS,DeSa,GraHilLe,Le,DRR}\footnote{Some of these results are concerned with the 3D case, where global existence of strong solutions is out of reach.}. Rather recently, even the existence of a global-in-time strong solution has been shown in \cite{GraHil}. All these papers are concerned with a simplified geometrical set-up, where the domain $\Omega$ is given by a rectangle and the flexible part of the boundary is flat, see figure 1. In this case the transformation between the reference domain and the moving domain is particularly easy, which simplifies the mathematical analysis significantly. While it is natural to start the investigation which such an idealised model, this model is not suitable for most real-world applications such as blood vessels. In the case of a more realistic non-flat geometry as in figure 2 only the existence of weak solutions to \eqref{1}--\eqref{2} is known, see \cite{LeRu,MuCa1,MuSc}$^1$. For a weak solution, the kinetic energy $\|\bfu\|_{L^2(\Omega_\eta)}$ is bounded, the velocity gradient belongs to $L^2$ and we have
\begin{align}\label{eq:290122}
\eta \in W^{1,\infty} \big(I; L^2(\omega) \big)\cap W^{1,2} \big(I; W^{1,2}(\omega) \big)\cap  L^\infty \big(I; W^{2,2}(\omega) \big).
\end{align}
This can be seen formerly by testing the momentum equation by $\bfu$ and the shell equation by $\partial_t\eta$ (note that the boundary terms cancel due to the condition $\bfu\circ \bfvarphi_\eta=\partial_t\eta\bfn$). 
With a weak solution at our disposal we are confronted with the question whether it enjoys additional regularity properties (in this case we speak about strong solutions) and is, in fact, unique. These properties are not only of theoretical interest but also crucial for robust numerical simulations.
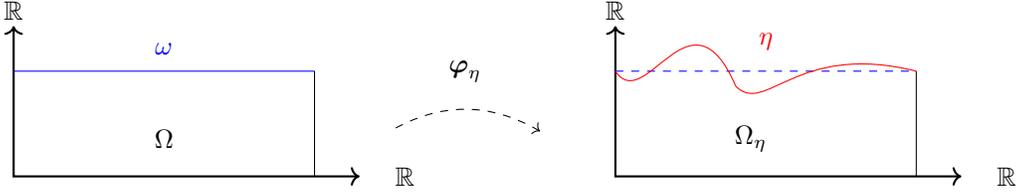
\begin{figure}\label{fig:1}
\begin{center}
\begin{tikzpicture}[scale=2]
  \begin{scope}     \draw [thick, <->] (0.5,0.5) -- (0.5,-0.5) -- (2.8,-0.5);
        \node at (0.5,0.6) {$\R$};
        \node at (1.5,-0.25) {$\Omega$};
        \node [blue] at (1.5,0.35) {$\omega$};
       \node at (3.1,-0.5) {$\R$};
        \draw [blue] (0.5,0.2) -- (2.5,0.2);
                \draw (2.5,0.2) -- (2.5,-0.5);
                   \node at (3.5,0.2) {$\bfvarphi_\eta$};
        \draw [thick, <->] (4.5,0.5) -- (4.5,-0.5) -- (6.8,-0.5);
        \node at (4.5,0.6) {$\R$};
       \node at (7.1,-0.5) {$\R$};
       \draw [blue,dashed] (4.5,0.2) -- (6.5,0.2);
                \draw (6.5,0.2) -- (6.5,-0.5);
                \draw [red] (4.5,0.2) .. controls (4.7,-0.1) and (5,0.8) .. (5.3,0.1);
                \draw [red] (5.3,0.1) .. controls (5.5,-0.1) and  (5.7,0.4) .. (6.5,0.2);
                        \node at (5.4,-0.25) {$\Omega_\eta$};
                         \node [red] at (5.5,0.4) {$\eta$};
          \draw [<-,dashed] (4,-0.2) to [out=150,in=30] (3,-0.2);
  \end{scope}
\end{tikzpicture}
\caption{Domain transformation in the simplified set-up.}
\end{center}
\end{figure}
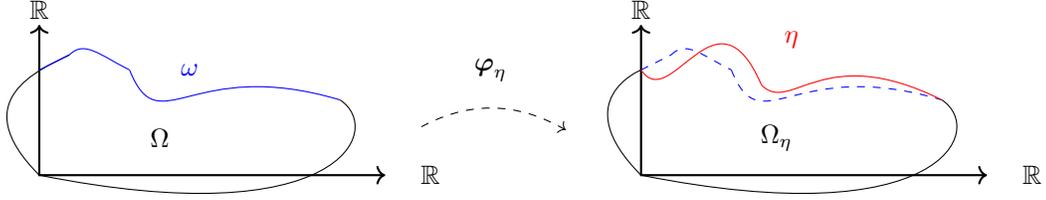
\begin{figure}\label{fig:2}
\begin{center}
\begin{tikzpicture}[scale=2]
  \begin{scope}     \draw [thick, <->] (0.5,0.5) -- (0.5,-0.5) -- (2.8,-0.5);
        \node at (0.5,0.6) {$\R$};
        \node at (1.3,-0.25) {$\Omega$};
          \node [blue] at (1.5,0.2) {$\omega$};
       \node at (3.1,-0.5) {$\R$};
       \draw (0.5,-0.5) .. controls (0,0) and (0.5,0.2) .. (0.7,0.3);
     \draw [blue] (0.5,0.2) .. controls (0.7,0.3) and (0.7,0.3) .. (0.7,0.3);
        \draw [blue] (0.7,0.3) .. controls (0.8,0.4) and (0.9,0.3) .. (1.1,0.2);
       \draw [blue] (1.1,0.2) .. controls (1.3,-0.3) and (1.5,0.3) .. (2.5,0);
       \draw (2.5,0) .. controls (2.8,-0.2) and (2.5,-0.9) .. (0.5,-0.5);
                   \node at (3.5,0.2) {$\bfvarphi_\eta$};
        \draw [thick, <->] (4.5,0.5) -- (4.5,-0.5) -- (6.8,-0.5);
        \node at (4.5,0.6) {$\R$};
       \node at (7.1,-0.5) {$\R$};
                     \draw (4.5,-0.5) .. controls (4,0) and (4.5,0.2) .. (4.5,0.2);
       \draw (6.5,0) .. controls (6.8,-0.2) and (6.5,-0.9) .. (4.5,-0.5);
        \draw [blue,dashed] (4.5,0.2) .. controls (4.7,0.3) and (4.7,0.3) .. (4.7,0.3);
        \draw [blue,dashed] (4.7,0.3) .. controls (4.8,0.4) and (4.9,0.3) .. (5.1,0.2);
       \draw [blue,dashed] (5.1,0.2) .. controls (5.3,-0.3) and (5.5,0.3) .. (6.5,0);
                \draw [red] (4.5,0.2) .. controls (4.7,-0.1) and (5,0.8) .. (5.3,0.1);
                \draw [red] (5.3,0.1) .. controls (5.5,-0.1) and  (5.7,0.4) .. (6.5,0);
                        \node at (5.4,-0.25) {$\Omega_\eta$};
                         \node [red] at (5.5,0.4) {$\eta$};
          \draw [<-,dashed] (4,-0.2) to [out=150,in=30] (3,-0.2);
  \end{scope}
\end{tikzpicture}
\caption{Domain transformation in the general set-up.}
\end{center}
\end{figure}
The analysis of regularity properties of solutions to \eqref{1}--\eqref{2} in the framework of figure 2 is the purpose of the present paper.
Our first main results shows that under natural assumptions on the data there is a unique global-in-time strong solution to \eqref{1}--\eqref{2}, see Theorem \ref{thm:main} for the precise statement. Here strong means that both equation hold in the strong sense, that is, all quantities exist as measurable functions. In particular, all terms in
the momentum equation \eqref{1} belong, in fact, to $L^2$ and we have
\begin{align}\label{eq:www}
\eta \in W^{1,\infty} \big(I; W^{1,2}(\omega) \big)\cap W^{1,2} \big(I; W^{2,2}(\omega) \big)\cap  L^\infty \big(I; W^{3,2}(\omega) \big) \cap W^{2,2}(I;L^2(\omega)).
\end{align}

\subsection{Stokes systems in irregular domains}
As in \cite{GraHil}
the crucial tool in our analysis of \eqref{1}--\eqref{2} is an elliptic estimate for the Stokes system
 \begin{align} \mu\Delta\bfu-\nabla \pi=-\bfg,\quad \Div\bfu=0,&\label{3}
 \end{align}
in a bounded domain $\mathcal O\subset\R^n$ (supplemented with homogeneous boundary conditions). To be more precise, we require an inequality of the form\footnote{This result is independent of the dimension.}
\begin{align}\label{4}
\|\bfu\|_{W^{2,p}(\mathcal O)}+\|\pi\|_{W^{1,p}(\mathcal O)}\lesssim \|\bfg\|_{L^p(\mathcal O)}
\end{align}
for $p\in(1,\infty)$ (in fact, $p=2$ is sufficient for the application to \eqref{1}--\eqref{2}). Such an estimate is well-known if the boundary of the underlying domains belongs to the class $C^2$. 
 We will apply \eqref{4} to \eqref{1} with $\bfg=\varrho_f\big(\partial_t \bfu+(\bfu\cdot\nabla)\bfu\big)$ and $\mathcal O=\Omega_{\eta(t)}$ for a fixed $t$. Hence the regularity of $\mathcal O$ is determined by $\eta$ which only belongs to $W^{2,2}$, see \eqref{eq:290122}. A version of \eqref{4} for the simplified framework from figure 1 is proved in \cite{GraHil}. It is, however, based on some cancellations which are not available in the general case. On the other hand, the question about minimal assumptions
on the regularity of $\partial\mathcal O$ for \eqref{4} is
of independent interest and seems to be missing in literature.
The only comparable result can be found in \cite{BS} (which is, in turn, based on results from \cite{FKV}), where an estimate for Lipschitz domains (that is, $\partial\mathcal O\in W^{1,\infty}$) is shown which controls fractional derivatives (of order 3/2 for $\bfu$ and 1/2 for $\pi$). The method from \cite{BS} is, unfortunately, designed specifically for Lipschitz domains and does not seem to apply in a more general framework.

In Theorem \ref{thm:stokessteady} we offer an exhaustive picture concerning
the maximal regularity theory for the Stokes system \eqref{3}
in irregular domains in the framework of fractional Sobolev spaces. This is based on the theory of Sobolev multipliers from \cite{MaSh} which has not been used in fluid mechanics before.
Our assumptions on the boundary coincide with those made in \cite[Section]{MaSh} for the Laplace equation which are known to be optimal. As a special case we obtain estimate \eqref{4} for $p=2$ provided the Lipschitz constant of $\partial\mathcal O$ is small and $\partial\mathcal O$ belongs -- roughly speaking -- to the class $W^{3/2,2}$ (we will make these concepts precise in Section \ref{sec:para}).
The relation between both spaces is that $W^{3/2,2}$ is the trace-space of $W^{2,2}$ (the space for the velocity field in \eqref{4}) in the sense that the linear mapping
\begin{align}\label{eq:traceembedding}
W^{2,2}(\mathbb R^n)\ni\varphi\mapsto \varphi(\cdot,0)\in W^{3/2,2}(\mathbb R^{n-1})
\end{align}
is continuous. 

\subsection{The acceleration estimate}\label{sec:accintro}
With estimate \eqref{4} at our disposal we return to the fluid-structure interaction problem \eqref{1}--\eqref{2}. We
aim at testing the structure equation \eqref{2} with $\partial_t^2\eta$ and seek for an appropriate test-function for the momentum equation. Due to the condition $\bfu\circ\bfvarphi_\eta=\partial_t\eta\bfn$ at the boundary we have 
\begin{align*}
\frac{\D}{\D t}\bfu&=\partial_t\bfu(t,x+\eta\bfn)+(\bfu(t,x+\eta\bfn)\cdot\nabla)\bfu(t,x+\eta\bfn)\\
&=\partial_t\bfu(t,x+\eta\bfn)+\partial_t\eta\bfn\cdot\nabla\bfu(t,x+\eta\bfn)\quad \text{on}\quad \partial\Omega_\eta.
\end{align*}
Hence the material derivative of the velocity field is the corresponding test-function for the momentum equation. To be precise we use 
\begin{align}\label{eq:materialtest}\partial_t\bfu+(\mathscr F_\eta(\partial_t\eta\bfn)\cdot\nabla)\bfu,
\end{align}
where $\mathscr F_\eta$ is an appropriate extension operator, see Section \ref{sec:ext}. The drawback with the function in \eqref{eq:materialtest} is that it is not solenoidal. This problem is overcome in \cite{GraHil} by using instead
\begin{align}\label{eq:materialtest'}\partial_t\bfu+(\mathscr F_\eta(\partial_t\eta\bfn)\cdot\nabla)\bfu-(\bfu\cdot\nabla )\mathscr F_\eta(\partial_t\eta\bfn).
\end{align}
In fact, in the simplified geometric set-up used in \cite{GraHil} it is possible to construct an extension which is at the same time solenoidal and satisfies
\begin{align}\label{on}
\nabla \mathscr F_\eta(\partial_t\eta\bfn)\cdot \bfn=0\quad \text{on}\quad \partial\Omega_\eta.
\end{align}
In conclusion, the function in \eqref{eq:materialtest'} is solenoidal and equals to $\partial_t^2\eta$ at the boundary.
In general case, some elementary calculations based on Fourier expansion reveal that both conditions cannot hold simultaneous. Therefor, the existing extension operators from \cite{LeRu} and \cite{MuSc} are solenoidal but do not satisfy \eqref{on}. In Section \ref{sec:accest} we propose an alternative approach which is based on \eqref{eq:materialtest} and an elementary extension operator introduced in Section
\ref{sec:ext}. It is not solenoidal but, different to those from  \cite{LeRu} and \cite{MuSc}, has the usual regularisation property (which is inverse to the trace embedding from \eqref{eq:traceembedding}). Accordingly, me must introduce the pressure function and estimate it. This can be done with the help of \eqref{4}, see the proof of Proposition \ref{prop2} for details.

In order to implement this idea rigorously we first prove the existence of a local-in-time strong solution in Section  \ref{sec:strong}. As in previous papers, where the flat geometry is considered, we follow a standard approach based on a transformation of \eqref{1} to the reference geometry, linearisation and a fixed point argument. Our situation is, however, technically more complicated due to the non-trivial transformation map between the reference and moving geometry.

\section{Preliminaries}
\subsection{Conventions}
For notational simplicity we set all physical constants in \eqref{1}--\eqref{2} to 1. The analysis is not effected as long as they are strictly positive.
We write $f\lesssim g$ for two non-negative quantities $f$ and $g$ if there is a $c>0$ such that $f\leq\,c g$. Here $c$ is a generic constant which does not depend on the crucial quantities. If necessary we specify particular dependencies. We write $f\approx g$ if $f\lesssim g$ and $g\lesssim f$.
We do not distinguish in the notation for the function spaces between scalar- and vector-valued functions. However, vector-valued functions will usually be denoted in bold case.
For simplicity we supplement \eqref{2} with periodic boundary conditions and identify $\omega$ (which represents the complete boundary of $\Omega$) with the interval $(0,1)$. We consider periodic function spaces for zero-average functions.
It is only a technical matter to consider instead \eqref{2} on a nontrivial subset of $\partial\Omega$ together with zero boundary conditions for $\eta$ and $\partial_y\eta$, see, e.g., \cite{LeRu} or \cite{BrSc} for the corresponding geometrical set-up.
We shorten the time interval $(0,T)$ by $I$.

\subsection{Classical function spaces}
Let $\mathcal O\subset\R^m$, $m\geq 1$, be open.
Function spaces of continuous or $\alpha$-H\"older-continuous functions, $\alpha\in(0,1)$,
 are denoted by $C(\overline{\mathcal O})$ or $C^{0,\alpha}(\overline{\mathcal O})$ respectively. Similarly, we write $C^1(\overline{\mathcal O})$ and $C^{1,\alpha}(\overline{\mathcal O})$.
We denote as usual by $L^p(\mathcal O)$ and $W^{k,p}(\mathcal O)$ for $p\in[1,\infty]$ and $k\in\mathbb N$ Lebesgue and Sobolev spaces over $\mathcal O$. For a bounded domain $\mathcal O$ the space $L^p_\perp(\mathcal O)$ denotes the subspace of  $L^p(\mathcal O)$ of functions with zero mean, that is $(f)_{\mathcal O}:=\dashint_{\mathcal O}f\dx:=\mathcal L^m(\mathcal O)^{-1}\int_{\mathcal O}f\dx=0$.
 We denote by $W^{k,p}_0(\mathcal O)$ the closure of the smooth and compactly supported functions in $W^{k,p}(\mathcal O)$. If $\partial\mathcal O$ is regular enough, this coincides with the functions vanishing $\mathcal H^{m-1}$ -a.e. on $\partial\mathcal O$. 
 We also denote by $W^{-k,p}(\mathcal O)$ the dual of $W^{k,p}_0(\mathcal O)$.
  Finally, we consider subspaces
$W^{1,p}_{\Div}(\mathcal O)$ and $W^{1,p}_{0,\Div}(\mathcal O)$ of divergence-free vector fields which are defined accordingly. The space $L^p_{\Div}(\mathcal O)$ is defined as the closure of the smooth and compactly supported solenoidal functions in $L^p(\mathcal O).$ We will use the shorthand notations $L^p_x$ and $W^{k,p}_x$ in the case of $n$-dimensional domains (typically spaces defined over $\Omega\subset\R^n$ or $\Omega_\eta\subset\R^n$) and   
$L^p_y$ and $W^{k,p}_y$ for $(n-1)$ dimensional sets (typcially spaces of periodic functions defined over $\omega\subset\R$). 

For a separable Banach space $(X,\|\cdot\|_X)$ we denote by $L^p(0,T;X)$ the set of (Bochner-) measurable functions $u:(0,T)\rightarrow X$ such that the mapping $t\mapsto \|u(t)\|_{X}\in L^p(0,T)$. 
The set $C([0,T];X)$ denotes the space of functions $u:[0,T]\rightarrow X$ which are continuous with respect to the norm topology on $(X,\|\cdot\|_X)$. For $\alpha\in(0,1]$ we write
$C^{0,\alpha}([0,T];X)$ for the space of H\"older-continuous functions with values in $X$. The space $W^{1,p}(0,T;X)$ consists of those functions from $L^p(0,T;X)$ for which the distributional time derivative belongs to $L^p(0,T;X)$ as well. The space $W^{k,p}(0,T;X)$ is defined accordingly.
We use the shorthand $L^p_tX$ for $L^p(0,T;X)$. For instance, we write $L^p_tW^{1,p}_x$ for $L^p(0,T;W^{1,p}(\mathcal O))$. Similarly, $W^{k,p}_tX$ stands for $W^{k,p}(0,T;X)$.

\subsection{Fractional differentiability and Sobolev mulitpliers}
For $p\in[1,\infty)$ the fractional Sobolev space (Sobolev-Slobodeckij space) with differentiability $s>0$ with $s\notin\mathbb N$ will be denoted by $W^{s,p}(\mathcal O)$. For $s>0$ we write $s=\lfloor s\rfloor+\lbrace s\rbrace$ with $\lfloor s\rfloor\in\N_0$ and $\lbrace s\rbrace\in(0,1)$.
 We denote by $W^{s,p}_0(\mathcal O)$ the closure of the smooth and compactly supported functions in $W^{1,p}(\mathcal O)$. For $s>\frac{1}{p}$ this coincides with the functions vanishing $\mathcal H^{m-1}$ -a.e. on $\partial\mathcal O$ provided $\partial\mathcal O$ is regular enough. We also denote by $W^{-s,p}(\mathcal O)$ for $s>0$ the dual of $W^{s,p}_0(\mathcal O)$. Similar to the case of unbroken differentiabilities above we use the shorthand notations $W^{s,p}_x$  and $W^{s,p}_y$. 
We will denote by $\bfB^s_{p,q}(\R^m)$ the standard Besov spaces on $\R^m$ with differentiability $s>0$, integrability $p\in[1,\infty]$ and fine index $q\in[1,\infty]$. They can be defined (for instance) via Littlewood-Paley decomposition leading to the norm $\|\cdot\|_{\bfB^s_{p,q}(\R^m)}$. 
 We refer to \cite{RuSi} and \cite{Tr,Tr2} for an extensive picture. 
 The Besov spaces $\bfB^s_{p,q}(\mathcal O)$ for a bounded domain $\mathcal O\subset\R^m$ are defined as the restriction of functions from $\bfB^s_{p,q}(\R^m)$, that is
 \begin{align*}
 \bfB^s_{p,q}(\mathcal O)&:=\{f|_{\mathcal O}:\,f\in \bfB^s_{p,q}(\R^m)\},\\
 \|g\|_{\bfB^s_{p,q}(\mathcal O)}&:=\inf\{ \|f\|_{\bfB^s_{p,q}(\R^m)}:\,f|_{\mathcal O}=g\}.
 \end{align*}
 If $s\notin\mathbb N$ and $p\in(1,\infty)$ we have $\bfB^s_{p,p}(\mathcal O)=W^{s,p}(\mathcal O)$.
 
In accordance with \cite[Chapter 14]{MaSh} the Sobolev multiplier norm  is given by
\begin{align}\label{eq:SoMo}
\|\varphi\|_{\mathcal M(W^{s,p}(\mathcal O))}:=\sup_{\bfv:\,\|\bfv\|_{W^{s-1,p}(\mathcal O)}=1}\|\nabla\varphi\cdot\bfv\|_{W^{s-1,p}(\mathcal O)},
\end{align}
where $p\in[1,\infty]$ and $s\geq1$.
The space $\mathcal M(W^{s,p}(\mathcal O))$ of Sobolev multipliers is defined as those objects for which the $\mathcal M(W^{s,p}(\mathcal O))$-norm is finite. By mathematical induction with respect to $s$ one can prove for Lipschitz-continuous functions $\varphi$ that membership to $\mathcal M(W^{s,p}(\mathcal O))$  in the sense of \eqref{eq:SoMo} implies that
\begin{align}\label{eq:SoMo'}
\sup_{w:\,\|w\|_{W^{s,p}(\mathcal O)}=1}\|\varphi \,w\|_{W^{s,p}(\mathcal O)}<\infty.
\end{align}
The quantity \eqref{eq:SoMo'} also serves as customary definition of the Sobolev multiplier norm in the literature but \eqref{eq:SoMo} is more suitable for our purposes.
Note that in our applications we always assume that the functions in question are Lipschitz continuous such that the implication above is given. 

Let us finally collect some some useful properties of Sobolev multipliers.
By \cite[Corollary 14.6.2]{MaSh} we have
\begin{align}\label{eq:MSa}
\|\phi\|_{\mathcal M(W^{s,p}(\R^{m}))}\lesssim\|\nabla\phi\|_{L^{\infty}(\R^m)},
\end{align}
provided that one of the following conditions holds:
\begin{itemize}
\item $p(s-1)<m$ and $\phi\in \bfB^{s}_{\varrho,p}(\R^{m})$ with $\varrho\in\big[\frac{pm}{p(s-1)-1},\infty\big]$;
\item $p(s-1)=m$ and $\phi\in\bfB^{s}_{\varrho,p}(\R^m)$ with $\varrho\in(p,\infty]$.
\end{itemize}
Note that the hidden constant in \eqref{eq:MSa} depends on the $\bfB^{s}_{\varrho,p}(\R^{m})$-norm of $\phi$.
By \cite[Corollary 4.3.8]{MaSh} it holds
\begin{align}\label{eq:MSb}
\|\phi\|_{\mathcal M(W^{s,p}(\R^{m}))}\approx
\|\nabla\phi\|_{W^{s-1,p}(\R^{m})} 
\end{align}
for $p(s-1)>m$. 
 Finally, we note the following rule about the composition with Sobolev multipliers which is a consequence of \cite[Lemma 9.4.1]{MaSh}. For open sets $\mathcal O_1,\mathcal O_2\subset\R^m$, $u\in W^{s,p}(\mathcal O_2)$ and a Lipschitz continuous function $\bfphi:\mathcal O_1\rightarrow\mathcal O_2$ with Lipschitz continuous inverse and $\bfphi\in \mathcal M(W^{s,p}(\mathcal O_1))$ we have
\begin{align}\label{lem:9.4.1}
\|u\circ\bfphi\|_{W^{s,p}(\mathcal O_1)}\lesssim \|u\|_{W^{s,p}(\mathcal O_2)}
\end{align}
with constant depending on $\bfphi$. Using Lipschitz continuity
of $\bfphi$ and $\bfphi^{-1}$, estimate \eqref{lem:9.4.1} is obvious for $s\in(0,1]$. The general case can be proved by mathematical induction with respect to $s$.

\subsection{Function spaces on variable domains}
\label{ssec:geom}
 The spatial domain $\Omega$ is assumed to be an open bounded subset of $\mathbb{R}^n$, $n=2,3$, with smooth boundary and an outer unit normal ${\bfn}$. Wee assume that
 $\partial\Omega$ can be parametrised by an injective mapping ${\bfvarphi}\in C^k(\omega;\R^n)$ for some sufficiently large $k\in\N$. If $n=3$ we suppose for all points $y=(y_1,y_2)\in \omega$ that the pair of vectors  
$\partial_i {\bfvarphi}(y)$, $i=1,2,$ are linearly independent.
If $n=2$ the corresponding assumption simply asks for $\partial_y\bfvarphi$ not to vanish.
 For a point $x$ in the neighborhood
or $\partial\Omega$ we can define the functions $y$ and $s$ by
\begin{align*}
 y(x)=\arg\min_{y\in\omega}|x-\bfvarphi(y)|,\quad s(x)=(x-y(x))\cdot\bfn(y(x)).
 \end{align*}
Moreover, we define the projection $\bfp(x)=\bfvarphi(y(x))$. We define $L>0$ to be the largest number such that $s,y$ and $\bfp$ are well-defined on $S_L$, where
\begin{align}
\label{eq:boundary1}
S_L=\{x\in\R^n:\,\mathrm{dist}(x,\partial\Omega)<L\}.
\end{align}
Due to the smoothness of $\partial\Omega$ for $L$ small enough we have $\abs{s(x)}=\min_{y\in\omega}|x-\bfvarphi(y)|$ for all $x\in S_L$. This implies that $S_L=\{s\bfn(y)+y:(s,y)\in (-L,L)\times \omega\}$.
%
For a given function $\eta : I \times \omega \rightarrow\R$ we parametrise the deformed boundary by
\begin{align*}
{\bfvarphi}_\eta(t,y)={\bfvarphi}(y) + \eta(t,y){\bfn}(y), \quad \,y \in \omega,\,t\in I.
\end{align*}
By possibly decreasing $L$, one easily deduces from this formula that $\Omega_{\eta}$ does not degenerate, that is
\begin{align}\label{eq:1705}
\begin{aligned}
\partial_y\bfvarphi_\eta(t,y)\neq0\quad\text{if} \quad n&=2,\quad\partial_1\bfvarphi_\eta\times\partial_2\bfvarphi_\eta(t,y)\neq0\quad\text{if} \quad n=3,\\
 \bfn(y)\cdot\bfn_{\eta(t)}(y)&>0,\quad \,y \in \omega,\,t\in I,
 \end{aligned}
\end{align}
provided $\|\eta\|_{L^\infty_{t,x}}<L$. Here $\bfn_{\eta(t)}$
is the normal of the domain $\Omega_{\eta(t)}$
 defined through
\begin{align}\label{eq:2612}
\partial\Omega_{\eta(t)}=\set{{\bfvarphi}(y) + \eta(t,y){\bfn}(y):y\in \omega}.
\end{align}
With some abuse of notation we define deformed space-time cylinder $I\times\Omega_\eta=\bigcup_{t\in I}\set{t}\times\Omega_{\eta(t)}\subset\R^{1+n}$.
The corresponding function spaces for variable domains are defined as follows.
\begin{definition}{(Function spaces)}
For $I=(0,T)$, $T>0$, and $\eta\in C(\overline{I}\times\omega)$ with $\|\eta\|_{L^\infty(I\times\omega)}< L$ we define for $1\leq p,r\leq\infty$
\begin{align*}
L^p(I;L^r(\Omega_\eta))&:=\big\{v\in L^1(I\times\Omega_\eta):\,\,v(t,\cdot)\in L^r(\Omega_{\eta(t)})\,\,\text{for a.e. }t,\,\,\|v(t,\cdot)\|_{L^r(\Omega_{\eta(t)})}\in L^p(I)\big\},\\
L^p(I;W^{1,r}(\Omega_\eta))&:=\big\{v\in L^p(I;L^r(\Omega_\eta)):\,\,\nabla v\in L^p(I;L^r(\Omega_\eta))\big\}.
\end{align*}
\end{definition}
For various purposes it is useful to relate the time dependent domain and the fixed domain. This can be done by the means of the Hanzawa transform. Its construction can be found in
\cite[pages 210, 211]{LeRu}. Note that variable domains in \cite{LeRu} are defined via functions $\zeta:\partial\Omega\rightarrow\R$ rather than functions $\eta:\omega\rightarrow\R$ (clearly, one can link them by setting
$\zeta=\eta\circ\bfvarphi^{-1}$). For any  $\eta:\omega\rightarrow(-L,L)$ we define the Hanzawa transform $\bfPsi_\eta: \Omega  \to \Omega_\eta$
by
\begin{align}
\label{map}
\begin{aligned}
\bfPsi_\eta(x)&= \begin{cases}\bfp(x)+\Big(s(x)+\eta(y(x))\phi(s(x))\Big)\bfn(y(x)),&\text{ if } \dist(x,\partial\Omega)<L,\\
\quad x,\quad &\text{elsewhere}.
\end{cases}
\end{aligned}
\end{align}
Here $\phi\in C^\infty(\mathbb R)$ is such that 
$\phi\equiv 0$ in neighborhood of $-L$ and $\phi\equiv 1$ in a neighborhood of $0$.
Due to the size of $L$, we find that $\bfPsi_\eta$ is a homomorphism such that $\bfPsi_\eta|_{\Omega\setminus S_L}$ is the identity. We clearly have for $k\in\N$ and $p\in[1,\infty]$
\begin{align}\label{est:psieta}
\|\bfPsi_\eta\|_{W^{k,p}_x}\lesssim 1+\|\eta\|_{W^{k,p}_y},\quad \eta\in W^{k,p}(\omega),
\end{align}
as well as
\begin{align}\label{est:psietazeta}
\|\bfPsi_\eta-\bfPsi_\zeta\|_{W^{k,p}_x}\lesssim \|\eta-\zeta\|_{W^{k,p}_y},\quad \eta,\zeta\in W^{k,p}(\omega),
\end{align}
where the hidden constant only depends on the reference geometry. 

If $\|\eta\|_{L^\infty_y}<\alpha<L$ and $\|\nabla\eta\|_{L^\infty_y}< R$ for some $\alpha,R>0$
the inverse\footnote{It exists provided we choose $\phi$ such that $|\phi'|<L/\alpha$.} $\bfPsi_\eta^{-1}:\Omega_\eta\rightarrow\Omega$ satisfies for $k\in\N$ and $p\in[1,\infty]$
\begin{align}\label{est:psi-1eta}
\|\bfPsi_\eta^{-1}\|_{W^{k,p}_x}\lesssim 1+\|\eta\|_{W^{k,p}_y},\quad \eta\in W^{k,p}(\omega),
\end{align}
as well as
\begin{align}\label{est:psi-1etazeta}
\|\bfPsi_\eta^{-1}-\bfPsi^{-1}_\zeta\|_{W^{k,p}_x}\lesssim \|\eta-\zeta\|_{W^{k,p}_y},\quad \eta,\zeta\in W^{k,p}(\omega),
\end{align}
if $\|\zeta\|_{L^\infty_y}<\alpha$ and $\|\nabla\zeta\|_{L^\infty_y}< R$.
In \eqref{est:psi-1eta} and \eqref{est:psi-1etazeta} the hidden constant depends on the reference geometry (which is assumed to be sufficiently smooth), on $L-\alpha$ and $R$. Similarly, we obtain fractional estimates, that is
\begin{align}
\label{est:psietas}
\|\bfPsi_\eta\|_{W^{s,p}_x}\lesssim 1+\|\eta\|_{W^{s,p}_y},\quad \eta\in W^{s,p}(\omega),\\
\label{est:psi-1etas}
\|\bfPsi_\eta^{-1}\|_{W^{s,p}_x}\lesssim 1+\|\eta\|_{W^{s,p}_y},\quad \eta\in W^{s,p}(\omega),
\end{align}
for $s>0 $ with $s\notin\N$ and
\begin{align}\label{est:psietazetas}
\|\bfPsi_\eta-\bfPsi_\zeta\|_{W^{s,p}_x}\lesssim \|\eta-\zeta\|_{W^{k,p}_y},\quad \eta,\zeta\in W^{s,p}(\omega),\\
\label{est:psi-1etazetas}
\|\bfPsi_\eta^{-1}-\bfPsi^{-1}_\zeta\|_{W^{s,p}_x}\lesssim \|\eta-\zeta\|_{W^{s,p}_y},\quad \eta,\zeta\in W^{s,p}(\omega).
\end{align}
 Finally, it holds
\begin{align}
\label{est:psietast}
\|\partial_t\bfPsi_\eta\|_{W^{s,p}_x}\lesssim 1+\|\partial_t\eta\|_{W^{s,p}_y},\quad \eta\in W^{1,1}(I;W^{s,p}(\omega)),
\end{align}
 uniformly in time.

\subsection{Extensions on variable domains}\label{sec:ext}
In this subsection we construct an extension operator
which extends functions from $\omega$ to the moving domain $\Omega_\eta$ for a given function $\eta$ defined on $\omega$. We follow \cite[Section 2.3]{BrSc}.
Since $\Omega$ is assumed to be sufficiently smooth, it is well-known that there is an extension operator $\mathscr F_\Omega$ which extends functions from $\partial\Omega$ to $\R^n$ and satisfies
\begin{align}\label{2.17a}
\mathscr F_\Omega:W^{\sigma,p}(\partial\Omega)\rightarrow W^{\sigma+1/p,p}(\R^n)
\end{align}
for all $p\in[1,\infty]$ and $\sigma\geq0$, all as well as $\mathscr F_\Omega v|_{\partial\Omega}=v$. Now we define $\mathscr F_\eta$ by 
\begin{align}\label{eq:2401b}
\mathscr F_\eta b=\mathscr F_\Omega ((b\bfn)\circ\bfvarphi^{-1})\circ{\bfPsi}_\eta^{-1},\quad b\in W^{\sigma,p}(\omega),
\end{align}
where $\bfvarphi$ is the function in the parametrisation of $\Omega$.
If $\eta$ is regular enough, $\mathscr F_\eta$ behaves as a classical extension. To be more precise, we can use the formula
\begin{align*}
\nabla\mathscr F_\eta b&=\nabla\mathscr F_\Omega ((b\bfn)\circ\bfvarphi^{-1})\circ{\bfPsi}_\eta^{-1}\nabla{\bfPsi}_\eta^{-1},
\end{align*}
estimate \eqref{est:psi-1eta} and \eqref{2.17a} to obtain the following.
\begin{lemma}\label{lem:3.8}
Let $\eta\in C^{0,1}(\omega)$ with $\|\eta\|_{L^\infty_y}<\alpha<L$. 
 The operator
$\mathscr F_\eta$ defined in \eqref{eq:2401b} satisfies for all $p\in(1,\infty]$, \footnote{It is possible to obtain a theory for any $\sigma\geq0$ provided $\eta$ is sufficiently regular.} $\sigma\in(0,1-\tfrac{1}{p}]$ and $s\in(0,\tfrac{1}{p})$,
\begin{align*}
\mathscr F_\eta: W^{\sigma,p}(\omega)\rightarrow W^{\sigma+1/p,p}(\Omega\cup S_\alpha),\quad \mathscr F_\eta: L^{p}(\omega)\rightarrow W^{s,p}(\Omega\cup S_\alpha)
\end{align*}
and $(\mathscr F_\eta b)\circ\bfvarphi_\eta=b\bfn$ on $\omega$ for all $b\in L^{p}(\omega)$. In particular, we have
\begin{align*}
\|\mathscr F_\eta b\|_{W^{\sigma+1/p,p}(\Omega\cup S_\alpha)}\lesssim\|b\|_{W^{\sigma,p}(\omega)},\quad \|\mathscr F_\eta b\|_{W^{s,p}(\Omega\cup S_\alpha)}\lesssim\|b\|_{L^{p}(\omega)},
\end{align*}
where the hidden constant depends only on $\Omega,p,\sigma$, $\|\nabla\eta\|_{L^\infty_y}$ and $L-\alpha$.
\end{lemma}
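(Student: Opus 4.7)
The operator $\mathscr F_\eta$ defined in \eqref{eq:2401b} factors as a three-fold composition: (i) $b \mapsto (b\bfn)\circ\bfvarphi^{-1}$, moving a scalar field on $\omega$ to a vector field on $\partial\Omega$; (ii) extension from $\partial\Omega$ into $\R^n$ via $\mathscr F_\Omega$; (iii) precomposition with the Hanzawa pull-back $\bfPsi_\eta^{-1}$. The plan is to bound each step separately in the appropriate Sobolev-Slobodeckij scale and then verify the trace identity by inspection.

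For step (i), the smoothness of $\bfvarphi$ and $\bfn$ makes the map bounded from $W^{\sigma,p}(\omega)$ to $W^{\sigma,p}(\partial\Omega)$ for every $\sigma\geq 0$ (and in particular from $L^p(\omega)$ to $L^p(\partial\Omega)$), with constants depending only on the reference geometry. For step (ii), \eqref{2.17a} gives $\mathscr F_\Omega((b\bfn)\circ\bfvarphi^{-1})\in W^{\sigma+1/p,p}(\R^n)$ with the desired norm bound; in the borderline case $\sigma=0$ the target $W^{1/p,p}(\R^n)$ still embeds into $W^{s,p}(\R^n)$ for any $s\in(0,1/p)$, and this embedding is precisely the origin of the restriction $s<1/p$ in the $L^p$-statement.

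The delicate part is step (iii). Under the assumption $\eta\in C^{0,1}(\omega)$ with $\|\eta\|_{L^\infty_y}<\alpha<L$, the Hanzawa map and its inverse are globally bi-Lipschitz, with Lipschitz constants controlled by $\|\nabla\eta\|_{L^\infty_y}$ and $L-\alpha$ (cf.\ \eqref{est:psi-1eta} applied with $k=1$, $p=\infty$), and equal to the identity outside $S_L$. Both target regularities $\sigma+1/p\in(1/p,1]$ and $s\in(0,1/p)$ lie in $(0,1]$, the range for which the composition estimate \eqref{lem:9.4.1} is an immediate consequence of the bi-Lipschitz property (as noted right after \eqref{lem:9.4.1}). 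Applying it with $\bfphi=\bfPsi_\eta^{-1}$ on $\Omega\cup S_\alpha$ (whose image remains inside the set where $\mathscr F_\Omega((b\bfn)\circ\bfvarphi^{-1})$ is controlled) transfers the bound from step (ii) to $W^{\sigma+1/p,p}(\Omega\cup S_\alpha)$, respectively to $W^{s,p}(\Omega\cup S_\alpha)$, with dependence of the constant exactly as stated.

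The trace identity is then automatic: since the cut-off $\phi$ satisfies $\phi(0)=1$, the formula \eqref{map} yields $\bfPsi_\eta(\bfvarphi(y))=\bfvarphi(y)+\eta(y)\bfn(y)=\bfvarphi_\eta(y)$, hence $\bfPsi_\eta^{-1}\circ\bfvarphi_\eta=\bfvarphi$ on $\omega$, and the extension property $\mathscr F_\Omega v|_{\partial\Omega}=v$ gives $(\mathscr F_\eta b)\circ\bfvarphi_\eta=(b\bfn)\circ\bfvarphi^{-1}\circ\bfvarphi=b\bfn$. The main conceptual obstacle is the endpoint $\sigma=0$, where $\mathscr F_\Omega$ lands in $W^{1/p,p}$ (no trace is defined) and the subsequent composition with a merely Lipschitz map is borderline; this is side-stepped precisely by sacrificing an arbitrarily small amount of fractional differentiability, which is what the strict inequality $s<1/p$ buys.
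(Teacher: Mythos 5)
Your proposal is correct and takes essentially the same approach as the paper's (very brief) proof: factor $\mathscr F_\eta$ as the smooth boundary chart, the extension operator $\mathscr F_\Omega$ via \eqref{2.17a}, and the bi-Lipschitz Hanzawa pull-back, bounding each factor in turn and reading off the constant's dependence on $\|\nabla\eta\|_{L^\infty_y}$ and $L-\alpha$. You formalize the composition step through \eqref{lem:9.4.1} in the fractional range $(0,1]$, which is equivalent to the paper's sketch via the chain-rule identity for $\nabla\mathscr F_\eta b$ combined with \eqref{est:psi-1eta}.
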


\subsection{The concept of solutions and the main results}
In this section we introduce the framework for the system \eqref{1}--\eqref{2} and present our main results concerning the regularity of solutions. We start with the definition of a weak solution.
Note that different to the previous subsection we assume again that $n=2$.
\begin{definition}[Weak solution] \label{def:weakSolution}
Let $(\bff, g, \eta_0,  \bfu_0, \eta_1)$ be a dataset such that
\begin{equation}
\begin{aligned}
\label{dataset}
&\bff \in L^2\big(I; L^2_{\mathrm{loc}}(\mathbb{R}^2)\big),\quad
g \in L^2\big(I; L^2(\omega)\big), \quad
\eta_0 \in W^{2,2}(\omega) \text{ with } \Vert \eta_0 \Vert_{L^\infty( \omega)} < L, 
\\
& 
\bfu_0\in L^2_{\mathrm{\Div}}(\Omega_{\eta_0}) \text{ is such that }\bfu_0\circ\bfvarphi_{\eta_0} =\eta_1 \bfn\text{ on $\omega$}, \quad
\eta_1 \in L^2(\omega).
\end{aligned}
\end{equation} 
We call the tuple
$(\eta,\bfu)$
a weak solution to the system \eqref{1}--\eqref{2} with data $(\bff, g, \eta_0,  \eta_1,\bfu_0)$ provided that the following holds:
\begin{itemize}
\item[(a)] The structure displacement $\eta$ satisfies
\begin{align*}
\eta \in W^{1,\infty} \big(I; L^2(\omega) \big)\cap W^{1,2} \big(I; W^{1,2}(\omega) \big)\cap  L^\infty \big(I; W^{2,2}(\omega) \big) \quad \text{with} \quad \Vert \eta \Vert_{L^\infty(I \times \omega)} <L,
\end{align*}
as well as $\eta(0)=\eta_0$ and $\partial_t\eta(0)=\eta_1$.
\item[(b)] The velocity field $\bfu$ satisfies
\begin{align*}
 \bfu \in L^\infty \big(I; L^2(\Omega_{\eta}) \big)\cap  L^2 \big(I; W^{1,2}_{\Div}(\Omega_{\eta}) \big) \quad \text{with} \quad 
\bfu\circ\bfvarphi_\eta =\partial_t \eta {\bfn}\quad\text{on}\quad I\times\omega,
\end{align*}
as well as $\bfu(0)=\bfu_0$.
\item[(c)] For all  $(\phi, {\bfphi}) \in C^\infty(\overline{I}\times\omega) \times C^\infty_{\Div}(\overline{I}\times \R^2; \R^2)$ with $\phi(T,\cdot)=0$, ${\bfphi}(T,\cdot)=0$ and $\bfphi\circ\bfvarphi_\eta =\phi {\bfn}$ on $I\times\omega$, we have
\begin{align*}
\int_I  \frac{\mathrm{d}}{\dt}\bigg(\int_{\Oeta}\bfu  \cdot {\bfphi}\dx
+\int_\omega \partial_t \eta \, \phi \dy
\bigg)\dt 
&=\int_I  \int_{\Oeta}\big(  \bfu\cdot \partial_t  {\bfphi} + \bfu \otimes \bfu: \nabla {\bfphi} 
  \big) \dx\dt
\\&
-\int_I  \int_{\Oeta}\big(   
\nabla \bfu:\nabla {\bfphi} -\bff\cdot{\bfphi} \big) \dx\dt\\
&+
\int_I \int_\omega \big(\partial_t \eta\, \partial_t\phi-\partial_t\partial_y\eta\,\partial_y \phi-
 g\, \phi \big)\dy\dt\\&-\int_I\int_\omega \partial_y^2\eta\,\partial_y^2 \phi\dy\dt.
\end{align*}
\end{itemize}
\end{definition}
The existence of a weak solution can be shown as in \cite{LeRu}. The term $\partial_t\partial_y^2\eta$ is not included there, but it does not alter the arguments. 
Note that we use a pressure-free formulation (that is, with test-function satisfying additionally $\Div\bfphi=0$) here. If the solution possess more regularity,
the pressure can be recovered by setting
\begin{align*}
\tilde \pi_0:=\Delta_{\eta}^{-1}\Div((\nabla\bfu)\bfu),\quad \pi_0:=\tilde\pi_0-(\tilde\pi_0)_{\Omega_{\eta}}.
\end{align*}
For $\mathcal O\subset\R^n$ open and bounded with normal $\bfn_{\mathcal O}$ 
we denote by $\Delta^{-1}_{\mathcal O}\Div$ the solution operator to the equation
\begin{align*}
\Delta h=\Div\bfg\quad\text{in}\quad\mathcal O,\quad \bfn_{\mathcal O}\cdot(\nabla h-\bfg)=0\quad\text{on}\quad\partial\mathcal O.
\end{align*}
We must complement $\pi_0$ by a function depending on time only being uniquely determined by the structure equation. Setting
$\pi(t)=\pi_0(t)+c_\pi(t)$ and 
testing the structure equation with 1 we obtain
\begin{align}\label{eq:pressure}
c_\pi(t)\int_{\omega}\bfn\cdot\bfn_\eta|\partial_y\bfvarphi_\eta|\dy=\int_{\omega}\bfn\big(\nabla\bfu+\nabla\bfu^\top-\pi_0\mathbb I_{2\times 2}\big)\circ\bfvarphi_\eta\bfn_\eta|\partial_y\bfvarphi_\eta|\dy+\int_\omega\partial_t^2\eta\dy-\int_\omega g\dy
\end{align}
 Since $\Omega_\eta$ is Lipschitz uniformly in time the operator $\Delta_{\Omega\eta}^{-1}\Div$ has the usual properties. In particular, it is continuous $L^2\rightarrow W^{1,2}$ such that
\begin{align*}
\int_I\int_{\Omega_\eta}|\nabla\pi|^2\dxt\lesssim \int_I\int_{\Omega_\eta}|(\nabla\bfu)\bfu)|^2\dxt\lesssim\bigg( \int_I\|\bfu\|^4_{L^4(\Omega_\eta)}\bigg)^{\frac{1}{2}}\bigg(\int_I\|\nabla\bfu\|^4_{L^{4}(\Omega_\eta)}\dt\bigg)^{\frac{1}{2}}
\end{align*}
by Ladyshenskaya's inequality (using again that $\Omega_\eta$ is Lipschitz uniformly in time). Hence we have $\pi\in L^2(I\;W^{1,2}(\Omega_\eta))$ provided the right-hand side is finite (which is the case if $\bfu$ and $\nabla\bfu$ belong to $L^4$ in space-time). This is the case for a strong solution which is defined as follows.

\begin{definition}[Strong solution] \label{def:strongSolution}
We call the triple $(\eta,\bfu,\pi)$ a strong solution to \eqref{1}--\eqref{2} provided $(\eta,\bfu)$ is a weak solution to \eqref{1}--\eqref{2}, it satisfies
\begin{align*}
\eta \in W^{1,\infty} \big(I; W^{1,2}(\omega) \big)\cap W^{1,2} \big(I; W^{2,2}(\omega) \big)\cap  L^\infty \big(I; W^{3,2}(\omega) \big) \cap W^{2,2}(I;L^2(\omega)),\\
 \bfu \in W^{1,2} \big(I; L^{2}(\Omega_{\eta}) \big)\cap  L^2 \big(I; W^{2,2}(\Omega_{\eta}) \big),\quad\pi\in  L^2 \big(I; W^{1,2}(\Omega_{\eta}) \big),
\end{align*}
and we have $\nabla\pi=\nabla\Delta_{\Omega_\eta}^{-1}\Div((\nabla\bfu)\bfu)$.
\end{definition}
For a strong solution $(\eta,\bfu,\pi)$ the momentum equation holds in the strong sense, that is we have
 \begin{align} \partial_t \bfu+(\nabla\bfu)\bfu&=\Delta\bfu-\nabla \pi+\bff&\label{1'}
 \end{align}
a.a. in $I\times\Omega_\eta$. The beam equation together with the regularity properties above yield $\eta\in L^2(I;W^{4,2}(\omega))$.  Hence the beam equation holds in the strong sense as well, that is we have
 \begin{align}\label{2'}
\ \partial_t^2\eta-\partial_t\partial_y^2\eta+\partial_y^4\eta=g-\bfn \bftau\circ\bfvarphi_\eta\partial_y\bfvarphi_\eta^\perp
 \end{align}
 a.a. in $I\times\omega$. 
Note that for a strong solution the Cauchy stress $\bftau=\nabla\bfu+\nabla\bfu^\top-\pi\mathbb I_{2\times 2}$ possesses enough regularity to be evaluated at the moving boundary (this is due to the trace theorem and the uniform Lipschitz continuity of $\Omega_\eta$).

We are finally ready to state our main result concerning the existence of a unique strong solution to \eqref{1}--\eqref{2}.
\begin{theorem}\label{thm:main}
Suppose that the dataset $(\bff, g, \eta_0,  \bfu_0, \eta_1)$
satisfies in addition to \eqref{dataset} that
\begin{equation}
\begin{aligned}
\label{dataset'}
g \in L^2\big(I; W^{1,2}(\omega)\big), \quad
\eta_0 \in W^{3,2}(\omega) , \quad \eta_1 \in W^{1,2}(\omega),\quad 
\bfu_0\in W^{1,2}_{\mathrm{\Div}}(\Omega_{\eta_0}).
\end{aligned}
\end{equation}
Then there is a unique strong solution to \eqref{1}--\eqref{2} in the sense of Definition \ref{def:strongSolution}. The interval of existence is of the form $I = (0, t)$, where $t < T$ only in case $\Omega_{\eta(s)}$ approaches a self-intersection when $s\rightarrow t$ or it degenerates\footnote{Self-intersection and degeneracy are excluded if $\|\eta\|_{L^\infty_{t,x}}<L$, cf. \eqref{eq:boundary1} and \eqref{eq:1705}.} (namely, if $\lim_{s\rightarrow t}\partial_y\bfvarphi_\eta(s,\omega)=0$ or $\lim_{s\rightarrow t}\bfn(y)\cdot\bfn_{\eta(s)}(y)=0$ for some $y\in\omega$).
\end{theorem}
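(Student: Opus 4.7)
My approach splits into two stages: a local existence/uniqueness result, followed by an a priori estimate that allows the local solution to be continued as long as the geometry does not degenerate.

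For the local-in-time stage, I transform the problem to the fixed reference domain $\Omega$ using the Hanzawa map $\bfPsi_\eta$ of \eqref{map}, so that the unknown shell displacement $\eta$ only appears through the coefficients. I then linearise: given a candidate $\tilde\eta$ with the regularity in \eqref{eq:www}, I solve the resulting linear parabolic Stokes problem for $(\bfu,\pi)$ on $\Omega$ using standard maximal regularity (the reference $\partial\Omega$ is smooth, so classical theory applies), and I solve the viscoelastic beam equation for $\eta$ with the resulting Cauchy-stress trace as forcing. The fixed point is set up in the class of $\eta$ satisfying \eqref{eq:www} with the compatibility condition $\bfu\circ\bfvarphi_\eta=\partial_t\eta\bfn$. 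Contraction on small time intervals follows from the Lipschitz bounds \eqref{est:psietazeta}, \eqref{est:psi-1etazeta}, \eqref{est:psietazetas}, \eqref{est:psi-1etazetas} together with the composition rule \eqref{lem:9.4.1}. This yields a maximal time of existence $T_\star$.

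For the continuation, the core is the acceleration estimate discussed in Section~\ref{sec:accintro}. I test the beam equation with $\partial_t^2\eta$ and the momentum equation with the non-solenoidal material-derivative test field in \eqref{eq:materialtest}, namely $\partial_t\bfu+(\mathscr F_\eta(\partial_t\eta\bfn)\cdot\nabla)\bfu$, constructed using the extension operator of Lemma \ref{lem:3.8}. On $\partial\Omega_{\eta(t)}$ this test field coincides with $\partial_t^2\eta\bfn$, so the boundary contributions from the viscous and inertial terms cancel against those produced by $\partial_t^2\eta$ tested into the beam equation. The resulting identity produces, on the left-hand side, the square norms $\|\partial_t\bfu\|_{L^2(\Omega_\eta)}^2$ and $\|\partial_t^2\eta\|_{L^2(\omega)}^2$ (together with time derivatives of the elastic energy $\|\partial_y^2\eta\|_{L^2(\omega)}^2$ and the first-order dissipation $\|\partial_t\partial_y\eta\|_{L^2(\omega)}^2$).

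The main obstacle is that the test field is not divergence-free: the pressure integral $\int_{\Omega_\eta}\pi\,\Div(\mathscr F_\eta(\partial_t\eta\bfn)\cdot\nabla)\bfu\dx$ does not vanish and must be controlled. This is exactly where the steady Stokes maximal regularity in low-regularity domains (Theorem \ref{thm:stokessteady}) enters: at each fixed $t$ I view \eqref{1'} as a steady Stokes system on $\Omega_{\eta(t)}$ with datum $\bfg=\partial_t\bfu+(\nabla\bfu)\bfu-\bff$, and since $\eta(t)\in W^{3,2}(\omega)$ provides a boundary of the multiplier class required by Theorem \ref{thm:stokessteady}, I obtain
\[
\|\bfu\|_{W^{2,2}(\Omega_{\eta(t)})}+\|\pi\|_{W^{1,2}(\Omega_{\eta(t)})}\lesssim \|\partial_t\bfu\|_{L^2(\Omega_{\eta(t)})}+\|(\nabla\bfu)\bfu\|_{L^2(\Omega_{\eta(t)})}+\|\bff\|_{L^2(\Omega_{\eta(t)})}.
\]
The convective term is absorbed via Ladyzhenskaya's inequality (this is where $n=2$ is crucial), and the remaining pressure term is reabsorbed into the left-hand side with the help of the extension bounds of Lemma \ref{lem:3.8}, the divergence-free correction via $\Delta_{\Omega_\eta}^{-1}\Div$, and the beam-equation identity \eqref{eq:pressure} that fixes the mean value of $\pi$. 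A Gronwall argument then closes the estimate and yields the regularity \eqref{eq:www}.

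Once the improved bounds are in hand, the continuation criterion from the local theory rules out blow-up except through degeneracy of $\Omega_{\eta(t)}$, giving the maximal interval described in the theorem. Uniqueness is obtained by subtracting two strong solutions with the same data, mapping them to a common reference domain, and running an energy estimate on the difference; all nonlinear terms are controlled by the $L^\infty_t W^{1,\infty}_x$ regularity of $\bfu$ and the $L^\infty_t W^{2,\infty}_y$ regularity of $\eta$ provided by \eqref{eq:www} and Sobolev embedding in $2$D, yielding Gronwall closure.
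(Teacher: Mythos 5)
Your plan follows essentially the same route as the paper: Hanzawa transform to the reference domain, linearise, fixed point (Lemmas~\ref{lem1}--\ref{lem3} give Proposition~\ref{prop:local}); then the acceleration estimate with the test field $\partial_t\bfu+(\mathscr F_\eta(\partial_t\eta\bfn)\cdot\nabla)\bfu$ and Theorem~\ref{thm:stokessteady} to control the pressure (Proposition~\ref{prop2}); then glue. However, there is one misstatement that touches the crux of the bootstrap.

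You write that Theorem~\ref{thm:stokessteady} applies ``since $\eta(t)\in W^{3,2}(\omega)$ provides a boundary of the multiplier class required.'' That is the wrong hypothesis to invoke here, and as phrased it risks circularity. The acceleration estimate is run to \emph{produce} the $L^\infty_t W^{3,2}_y$ bound on $\eta$; at that stage the only uniform bound available from the energy estimate~\eqref{eq:energy} is $\eta\in L^\infty(I;W^{2,2}(\omega))$. The entire point of Theorem~\ref{thm:stokessteady}, and of Remark~\ref{rem:stokes}, is that this energy-level regularity is already sufficient: after the local straightening in Remark~\ref{rem:stokes}, $\partial\Omega_{\eta(t)}$ belongs to $\bfB^{s}_{2,2}$ for some $s>3/2$ with a small local Lipschitz constant, and crucially the elliptic constant in~\eqref{eq:main} depends only on $\|\eta(t)\|_{W^{2,2}_y}$, hence is uniform in $t$ by~\eqref{eq:apriorieta}. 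If instead the Stokes constant were allowed to depend on $\|\eta(t)\|_{W^{3,2}_y}$, the right-hand side of~\eqref{eq:reg2} would contain the very quantity $\sup_{I^\ast}\|\partial_y^3\eta\|_{L^2_y}$ that you are trying to estimate, and the Gronwall closure collapses. So you should replace the appeal to $W^{3,2}$ by an appeal to the $W^{2,2}$-bound from~\eqref{eq:apriorieta} and spell out that the constant in the Stokes estimate is controlled by it.

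A secondary issue: in the uniqueness sketch you invoke ``$L^\infty_t W^{1,\infty}_x$ regularity of $\bfu$,'' which does not follow from~\eqref{eq:www}. In 2D one has $\bfu\in L^2(I;W^{2,2}(\Omega_\eta))$, and $W^{2,2}$ embeds into $W^{1,q}$ for every finite $q$ but not into $W^{1,\infty}$, and there is no $L^\infty$ time integrability either. The paper avoids this by deriving uniqueness as part of the local fixed-point argument (Proposition~\ref{prop:local}); if you prefer a direct Gronwall argument on the difference of two solutions, you should instead exploit $\bfu\in L^2_tW^{2,2}_x$ combined with Ladyzhenskaya and interpolation, not a pointwise-in-time Lipschitz bound.
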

The proof of Theorem \ref{thm:main} can be found in Section \ref{sec:reg}.

\begin{remark}
A drawback of Theorem \ref{thm:main} compared to the corresponding statement for the flat geometry from \cite{GraHil} is that we can currently not exclude a self-intersection of the moving domain for arbitrary times. It would be a of great interest to prove a distance estimate as in \cite[Section 4.2]{GraHil}
in the present set-up. 
\end{remark}

\section{The Stokes equations in non-smooth domains}
This section is devoted to the study of the Stokes equations
in a domain $\mathcal O\subset\R^n$, $n\geq 2$, with minimal regularity.
We start by introducing the necessary framework to parametrise
the boundary of the underlying domain by local maps of a certain regularity. This yields, in particular, a rigorous definition of a  $\bfB^s_{\rho,q}$-boundary. In Section \ref{sec:stokessteady} we consider the steady Stokes system. This will be crucial for the acceleration estimate for the fluid-structure problem in Section \ref{sec:accest} (we explain in Remark \ref{rem:stokes} how to parametrise the sets $\Omega_\eta$ introduced in Section \ref{ssec:geom} by local maps). 
 
\subsection{Parametrisation of domains}\label{sec:para}
 Let $\mathcal O\subset\R^n$, $n\geq 2$, by a bounded open set.
We assume that $\partial{\mathcal{O}}$ can be covered by a finite
number of open sets $\mathcal U^1,\dots,\mathcal U^\ell$ for some $\ell\in\mathbb N$, such that
the following holds. For each $j\in\{1,\dots,\ell\}$ there is a reference point
$y^j\in\R^n$ and a local coordinate system $\{e^j_1,\dots,e_n^j\}$ (which we assume
to be orthonormal and set $\mathcal Q_j=(e_1^j|\dots |e_n^j)\in\mathbb R^{n\times n}$), a function
$\varphi_j:\mathbb R^{n-1}\rightarrow\mathbb R$
and $r_j>0$
with the following properties:
\begin{enumerate}[label={\bf (A\arabic{*})}]
\item\label{A1} There is $h_j>0$ such that
$$\mathcal U^j=\{x=\mathcal Q_jz+y^j\in\mathbb R^n:\,z=(z',z_n)\in\R^n,\,|z'|<r_j,\,
|z_n-\varphi_j(z')|<h_j\}.$$
\item\label{A2} For $x\in\mathcal U^j$ we have with $z=\mathcal Q_j^\top(x-y^j)$
\begin{itemize}
\item $x\in\partial{\mathcal{O}}$ if and only if $z_n=\varphi_j(z')$;
\item $x\in{\mathcal{O}}$ if and only if $0<z_n-\varphi_j(z')<h_j$;
\item $x\notin{\mathcal{O}}$ if and only if $0>z_n-\varphi_j(z')>-h_j$.
\end{itemize}
\item\label{A3} We have that
$$\partial{\mathcal{O}}\subset \bigcup_{j=1}^\ell\mathcal U^j.$$
\end{enumerate}
In other words, for any $x_0\in\partial{\mathcal{O}}$ there is a neighborhood $U$ of $x_0$ and a function $\varphi:\mathbb R^{n-1}\rightarrow\mathbb R$ such that after translation and rotation\footnote{By translation via $y_j$ and rotation via $\mathcal Q_j$ we can assume that $x_0=0$ and that the outer normal at~$x_0$ is pointing in the negative $x_n$-direction.}
 \begin{align}\label{eq:3009}
 U \cap {\mathcal{O}} = U \cap G,\quad G = \set{(x',x_n)\in \R^n \,:\, x' \in \R^{n-1}, x_n > \varphi(x')}.
 \end{align}
 The regularity of $\partial{\mathcal{O}}$ will be described by means of local coordinates as just described.
 \begin{definition}\label{def:besovboundary}
 Let ${\mathcal{O}}\subset\R^n$ be a bounded domain, $s>0$ and $1\leq \rho,q\leq\infty$. We say that $\partial{\mathcal{O}}$ belongs to the class $\bfB^s_{\rho,q}$ if there is $\ell\in\mathbb N$ and functions $\varphi_1,\dots,\varphi_\ell\in\bfB^s_{\rho,q}(\mathbb R^{n-1})$ satisfying \ref{A1}--\ref{A3}.
 \end{definition}
Clearly, a similar definition applies for a Lipschitz boundary (or a $C^{1,\alpha}$-boundary with $\alpha\in(0,1)$) by requiring that $\varphi_1,\dots,\varphi_\ell\in W^{1,\infty}(\mathbb R^{n-1})$ (or $\varphi_1,\dots,\varphi_\ell\in C^{1,\alpha}(\mathbb R^{n-1})$). We say that the local Lipschitz constant of $\partial{\mathcal{O}}$, denoted by $\mathrm{Lip}(\partial{\mathcal{O}})$, is (smaller or) equal to some number $L>0$ provided the Lipschitz constants of $\varphi_1,\dots,\varphi_\ell$ are not exceeding $L$. Our main result depends on the assumption of a sufficiently small local Lipschitz constant. While this seems rather restrictive at first glance, it appears quite natural when looking closer. Indeed, it holds, for instance, if the regularity
of $\partial{\mathcal{O}}$ is better than Lipschitz (such as $C^{1,\alpha}$ for some $\alpha>0$). By means of the transformations $\mathcal Q_j$ introduced above, we can assume that the reference point $y^j$ in question is the origin and that $\nabla\varphi_j(0)=0$. Choosing $r_j$ in \ref{A1} small enough (which can be achieved simply by allowing more sets in the cover $\mathcal U^1,\dots,\mathcal U^l$) we have
\begin{align*}
|\nabla\varphi_j(z')|=|\nabla\varphi_j(z')-\nabla\varphi_j(0)|\leq\,r_j^\alpha[\nabla\varphi_j]_{C^\alpha}\ll 1
\end{align*}
for all $z'$ with $|z'|\leq r_j$.

In order to describe the behaviour of functions defined in ${\mathcal{O}}$ close to the boundary we need to extend the functions $\varphi_1,\dots,\varphi_\ell$  from \ref{A1}--\ref{A3} to the half space
$\mathbb H := \set{z = (z',z_n)\,:\, z_n > 0}$. Hence we are confronted with the task of extending a function~$\phi\,:\, \R^{n-1}\to \R$ to a mapping $\Phi\,:\, \mathbb H \to \R^n$ that maps the 0-neighborhood in~$\mathbb H$ to the $x_0$-neighborhood in~${\mathcal{O}}$. The mapping $(z',0) \mapsto (z',\phi(z'))$ locally maps the boundary of~$\mathbb H$ to the one of~$\partial {\mathcal{O}}$. We extend this mapping using the extension operator of Maz'ya and Shaposhnikova~\cite[Section 9.4.3]{MaSh}. Let $\zeta \in C^\infty_c(B_1(0'))$ with $\zeta \geq 0$ and $\int_{\R^{n-1}} \zeta(x')\dx'=1$. Let $\zeta_t(x') := t^{-(n-1)} \zeta(x'/t)$ denote the induced family of mollifiers. We define the extension operator 
\begin{align*}
  (\mathcal{T}\phi)(z',z_n)=\int_{\R^{n-1}} \zeta_{z_n}(z'-y')\phi(y')\dy',\quad (z',z_n) \in \mathbb H,
\end{align*}
where~$\phi:\R^n\to \R$ is a Lipschitz function with Lipschitz constant~$K$.
Then the estimate 
\begin{align}\label{est:ext}
  \norm{\nabla (\mathcal{T} \phi)}_{\bfB_{\rho,q}^{s}(\setR^{n})}\le c\norm{\nabla \phi}_{\bfB_{\rho,q}^{s-\frac 1 p}(\setR^{n-1})}
\end{align}
follows from~\cite[Theorem 8.7.2]{MaSh}. Moreover, \cite[Theorem 8.7.2]{MaSh} yields
\begin{align}\label{eq:MS}
\|\mathcal T\phi\|_{\mathcal M(W^{s,p}(\mathbb H))}\lesssim \|\phi\|_{\mathcal M(W^{s-1/p,p}(\R^{n-1}))}.
\end{align}
It is shown in \cite[Lemma 9.4.5]{MaSh} that (for sufficiently large~$N$, i.e., $N \geq c(\zeta) K+1$) the mapping
\begin{align*}
  \alpha_{z'}(z_n) \mapsto N\,z_n+(\mathcal{T} \phi)(z',z_n)
\end{align*}
is for every $z' \in \setR^{n-1}$ one to one and the inverse is Lipschitz with its gradient
bounded by $(N-K)^{-1}$.
Now, we define the mapping~$\bfPhi\,:\, \mathbb H \to \R^n$ as a rescaled version of the latter one by setting
\begin{align}\label{eq:Phi}
  \bfPhi(z',z_n)
  &:=
    \big(z',
    \alpha_{z'}(z_n)\big) = 
    \big(z',
    \,z_n + (\mathcal{T} \phi)(z',z_n/K)\big).
\end{align}
Thus, $\bfPhi$ is one-to-one (for sufficiently large~$N=N(K)$) and we can define its inverse $\bfPsi := \bfPhi^{-1}$.
The mapping $\bfPhi$ has the Jacobi matrix of the form
\begin{align}\label{J}
  J = \nabla \bfPhi = 
  \begin{pmatrix}
    \mathbb I_{(n-1)\times (n-1)}&0
    \\
    \partial_{z'} (\mathcal{T}  \phi)& 1+ 1/N\partial_{z_n}\mathcal{T}  \phi
  \end{pmatrix}.
\end{align}
Since 
$\abs{\partial_{z_n}\mathcal{T}  \phi} \leq K$, we have \begin{align}\label{eq:detJ}\frac{1}{2} < 1-K/N \leq \abs{\det(J)} \leq 1+K/N\leq 2\end{align}
using that $N$ is large compared to~$K$. Finally, we note the implication
\begin{align} \label{eq:SMPhiPsi}
\bfPhi\in\mathcal M(W^{s,p}(\mathbb H))\,\,\Rightarrow \,\,\bfPsi\in\mathcal M(W^{s,p}(\mathbb H)),
\end{align}
which holds, for instance, if $\bfPhi$ is Lipschitz continuous, cf. \cite[Lemma 9.4.2]{MaSh}. In fact, one can prove \eqref{eq:SMPhiPsi} with the help of \eqref{lem:9.4.1} and \eqref{eq:detJ}.

\subsection{The steady Stokes problem}\label{sec:stokessteady}
In this section we consider the steady Stokes system
\begin{align}\label{eq:Stokes}
\Delta \bfu-\nabla\pi=-\bff,\quad\Div\bfu=0,\quad\bfu|_{\partial{\mathcal{O}}}=\bfu_{\partial},
\end{align}
in a domain ${\mathcal{O}}\subset\R^n$ with unit normal $\bfn$. The result given in the following theorem is a maximal regularity estimate for the solution in terms of the right-hand side and the boundary datum under minimal assumption on the regularity of $\partial\mathcal O$. 
\begin{theorem}\label{thm:stokessteady}
Let $p\in(1,\infty)$, $s\geq 1+\frac{1}{p}$ and 
\begin{align}\label{eq:SMp}
\varrho\geq p\quad\text{if}\quad p(s-1)\geq n,\quad \varrho\geq \tfrac{p(n-1)}{p(s-1)-1}\quad\text{if}\quad p(s-1)< n,
\end{align}
 such that 
$n\big(\frac{1}{p}-\frac{1}{2}\big)+1\leq  s$.
 Suppose that ${\mathcal{O}}$ is a $\bfB^{\theta}_{\varrho,p}$-domain for some $\theta>s-1/p$ with locally small Lipschitz constant, $\bff\in W^{s-2,p}({\mathcal{O}})$ and $\bfu_{\partial}\in W^{s-1/p,p}(\partial{\mathcal{O}})$ with $\int_{\partial{\mathcal{O}}}\bfu_\partial\cdot\bfn\,\dd\mathcal H^{n-1}=0$. Then there is a unique solution to \eqref{eq:Stokes} and we have
\begin{align}\label{eq:main}
\|\bfu\|_{W^{s,p}({\mathcal{O}})}+\|\pi\|_{W^{s-1,p}({\mathcal{O}})}\lesssim\|\bff\|_{W^{s-2,p}({\mathcal{O}})}+\|\bfu_{\partial}\|_{W^{s-1/p,p}(\partial{\mathcal{O}})}.
\end{align}
\end{theorem}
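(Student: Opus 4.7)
The plan is to reduce the problem, via a partition of unity subordinate to the cover $\mathcal U^1,\dots,\mathcal U^\ell$ of $\partial\mathcal O$, to a finite collection of local problems: an interior problem on $\mathcal O$ away from $\partial\mathcal O$ (to which classical regularity theory applies directly) and boundary problems on each $\mathcal U^j\cap\mathcal O$. Each boundary piece is then straightened via the mapping $\bfPhi$ from \eqref{eq:Phi} (built from $\varphi_j$ through the Maz'ya--Shaposhnikova extension operator $\mathcal T$) to a problem on the half-space $\mathbb H$. Setting $\tilde\bfu=\bfu\circ\bfPhi$, $\tilde\pi=\pi\circ\bfPhi$, the transformed system takes the schematic form
\begin{align*}
\Delta\tilde\bfu-\nabla\tilde\pi=-\tilde\bff+\mathcal R_1(\tilde\bfu,\tilde\pi),\qquad\Div\tilde\bfu=\mathcal R_2(\tilde\bfu),\qquad\tilde\bfu|_{\partial\mathbb H}=\tilde\bfu_\partial,
\end{align*}
where the remainders $\mathcal R_1,\mathcal R_2$ depend linearly on $\tilde\bfu,\tilde\pi$ through the coefficients $\nabla\bfPhi-\mathbb I$, $(\nabla\bfPhi)^{-1}-\mathbb I$ and their first derivatives.

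The main step is then a perturbation argument on the half-space. Classical maximal regularity for the Stokes system on $\mathbb H$ (available in $W^{s,p}$ for any $s\geq 1+1/p$ and $p\in(1,\infty)$, by e.g.\ interpolation between the standard $W^{2,p}$-theory and higher-order estimates obtained by differentiating tangentially) yields an estimate controlling $\|\tilde\bfu\|_{W^{s,p}(\mathbb H)}+\|\tilde\pi\|_{W^{s-1,p}(\mathbb H)}$ by the $W^{s-2,p}$-norm of the right-hand side (plus the divergence and boundary data in the appropriate trace spaces). Applying this to the transformed system, the key point is to absorb $\mathcal R_1$ and $\mathcal R_2$ into the left-hand side. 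This is exactly where the Sobolev multiplier theory comes in: by \eqref{eq:MS} combined with \eqref{est:ext}, the coefficients $\nabla\bfPhi-\mathbb I$ define bounded multipliers on $W^{s-1,p}(\mathbb H)$, and the assumption that $\partial\mathcal O\in\bfB^{\theta}_{\varrho,p}$ with $\theta>s-1/p$ places $\nabla\varphi_j$ in $\bfB^{\theta-1}_{\varrho,p}(\R^{n-1})$, which via \eqref{eq:MSa}/\eqref{eq:MSb} under the condition \eqref{eq:SMp} makes the multiplier norm $\|\bfPhi\|_{\mathcal M(W^{s,p}(\mathbb H))}$ controlled by $\|\nabla\varphi_j\|_{L^\infty}$ (up to a constant depending on the Besov norm of the parametrisation).

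The smallness of the local Lipschitz constant is used decisively: because $\|\nabla\bfPhi-\mathbb I\|_{L^\infty}\lesssim\mathrm{Lip}(\partial\mathcal O)$ and the multiplier norms of the perturbation are bounded by the same quantity, choosing $\mathrm{Lip}(\partial\mathcal O)$ sufficiently small makes the operator norm of $(\mathcal R_1,\mathcal R_2)$ on $W^{s,p}\times W^{s-1,p}$ smaller than the reciprocal of the maximal regularity constant. A Neumann series then inverts $\mathrm{Id}-(\mathcal R_1,\mathcal R_2)$ and yields the desired estimate on $\mathbb H$. Pulling back via $\bfPsi=\bfPhi^{-1}$ (whose multiplier norm is controlled by \eqref{eq:SMPhiPsi}) and combining with the interior estimate and the partition of unity, we recover \eqref{eq:main}. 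Uniqueness follows by applying the same estimate to the difference of two solutions (with zero data) and invoking the trivial zero-solution case. Existence is obtained by the standard method of constructing a solution through the a priori estimate: on each local chart the contraction produced by the Neumann series gives a solution to the flattened problem, and these are glued together consistently via the partition of unity; alternatively, one solves \eqref{eq:Stokes} in a smooth approximating domain and passes to the limit using the uniform bound \eqref{eq:main}.

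The main obstacle is the sharp tracking of the multiplier estimates under the condition \eqref{eq:SMp}: one must verify that $\mathcal T\varphi_j$ (and its gradient) produces multipliers on $W^{s,p}(\mathbb H)$ and $W^{s-1,p}(\mathbb H)$ with norms controlled by $\|\nabla\varphi_j\|_{L^\infty}$ (times a constant depending on the Besov norm), both in the subcritical regime $p(s-1)<n$ (forcing the lower bound on $\varrho$) and in the supercritical regime via \eqref{eq:MSb}. The second delicate point is the commutator arising from the divergence constraint: since $\Div\tilde\bfu=\mathcal R_2(\tilde\bfu)\neq 0$, one must either correct $\tilde\bfu$ by a Bogovskii-type lift with the required regularity on $\mathbb H$, or absorb the divergence term directly into the maximal regularity estimate for the non-homogeneous Stokes system; both require the Bogovskii operator (or equivalent) to act $W^{s-1,p}\to W^{s,p}$ on the half-space with operator norm compatible with the Neumann iteration.
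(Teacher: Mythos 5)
Your overall strategy — localize, flatten via the Maz'ya--Shaposhnikova extension, read the coefficient perturbations as Sobolev multipliers, and use the small local Lipschitz constant to absorb them — is the same as the paper's. But there is a genuine gap in the absorption step, which is exactly where the paper has to work harder than a pure Neumann series would suggest.

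When you localize with a partition of unity $(\xi_j)$ and set $\bfu_j=\xi_j\bfu$, $\Pi_j=\xi_j\pi$, the transformed system on the half-space does not only carry the coefficient perturbations $\nabla\bfPhi-\mathbb I$ etc. It also carries the commutator terms $[\Delta,\xi_j]\bfu$, $[\nabla,\xi_j]\Pi$ and $\nabla\xi_j\cdot\bfu$ on the right-hand side (see \eqref{eq:Stokes2} in the paper). These are \emph{lower-order} in $(\bfu,\pi)$ but are \emph{not} small in any multiplier norm — they scale with the size of $\nabla\xi_j$, not with $\mathrm{Lip}(\partial{\mathcal{O}})$. Your Neumann-series argument would absorb the coefficient perturbations, which by \eqref{eq:MSa}/\eqref{eq:MSb} have multiplier norm controlled by the small Lipschitz constant, but it cannot absorb the commutator terms: after the local estimate you are left with $\|\bfu\|_{W^{s-1,p}}$ and $\|\pi\|_{W^{s-2,p}}$ on the right-hand side (estimate \eqref{almost0} in the paper), and these do not become small just by shrinking the charts.

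This is precisely where the hypothesis $n\big(\tfrac1p-\tfrac12\big)+1\leq s$ enters — a condition your proposal never uses, which is a red flag. The paper invokes it to ensure $W^{s-2,p}\hookrightarrow W^{-1,2}$ and $W^{s-1/p,p}(\partial{\mathcal{O}})\hookrightarrow W^{1/2,2}(\partial{\mathcal{O}})$, guaranteeing that a weak energy solution in $W^{1,2}_{0,\Div}\times L^2_\perp$ exists; it then interpolates $\|\bfu\|_{W^{s-1,p}}\leq\|\bfu\|_{W^{s,p}}^\alpha\|\bfu\|_{W^{1,2}}^{1-\alpha}$ and bounds the $W^{1,2}$-norm by the energy estimate against $\|\bff\|_{W^{-1,2}}$. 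This produces \eqref{almost1}--\eqref{almost2}, i.e.\ $\|\bfu\|_{W^{s-1,p}}\leq\kappa\|\bfu\|_{W^{s,p}}+c(\kappa)\|\bff\|_{W^{s-2,p}}$, and similarly for $\pi$, with $\kappa$ freely small. Only then can the commutator contributions be absorbed. Without this interpolation step the closure of the a priori estimate fails. You should also note that the estimate is first derived under an a priori smoothness assumption on $(\bfu,\pi)$ and then removed by mollifying the functions $\varphi_j$, rather than by solving the flattened chart problems and gluing; the latter would require an argument for compatibility of the local solutions which the paper does not attempt.
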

\begin{remark}
The theorem holds under the slightly weaker assumption
that $\partial{\mathcal{O}}\in\mathcal M(W^{s-1/p,p})(\delta)$ for $\delta$ sufficiently small. This means that the functions $\varphi_1,\dots,\varphi_\ell$ from the parametrisation of $\partial{\mathcal{O}}$ belong to the the multiplier space $\mathcal M(W^{s,p}(\R^{n-1}))$ with norm
bounded by $\delta$. This is a sharp assumption for the corresponding theory for the Laplace equation, cf. \cite[Chapter 14]{MaSh}. The relationship between $\mathcal M(W^{s-1/p,p})(\delta)$ and Besov spaces can be seen from \eqref{eq:MSa} and \eqref{eq:MSb}. 
\end{remark}
\begin{proof}
By use of a standard extension operator we can assume that $\bfu_\partial=0$. Otherwise we can solve the homogeneous problem with solution $\tilde{\bfu}$ and set $$\bfu:=\tilde\bfu+\mathcal E_{{\mathcal{O}}}\bfu_\partial-\Bog_{\mathcal{O}}(\Div \mathcal E_{{\mathcal{O}}}\bfu_\partial)$$ where $$\mathcal E_{{\mathcal{O}}}:W^{s-1/p,p}(\partial{\mathcal{O}})\rightarrow W^{s,p}({\mathcal{O}})$$
is a continuous linear extension operator and $\Bog_{\mathcal{O}}$ the Bogovskii-operator. The latter solves the divergence equation (with respect to homogeneous boundary conditions on $\partial{\mathcal{O}}$) and satisfies 
\begin{align}\label{eq:bog}
\Bog_{\mathcal{O}}\Div :W^{s,p}\cap \bigg\{\bfw:\,\int_{\partial{\mathcal{O}}}\bfw\cdot\bfn\,\dd\mathcal H^{n-1}=0\bigg\}\rightarrow W^{s,p}_0({\mathcal{O}})
\end{align} for all $s\geq 1$ and $p\in (1,\infty)$. See \cite{Ga}[Section III.3] for the case $s\in\N_0$, the case of fractional $s$ follows by interpolation. 

Our assumption \small$n\big(\frac{1}{p}-\frac{1}{2}\big)+1\leq  s$\normalsize\, implies $W^{s-2,p}({\mathcal{O}})\hookrightarrow W^{-1,2}({\mathcal{O}})$ and $W^{s-1/p,p}(\partial{\mathcal{O}})\hookrightarrow W^{1/2,2}(\partial{\mathcal{O}})$ such that a unique weak solution $(\bfu,\pi)\in W^{1,2}_{0,\Div}({\mathcal{O}})\times L^2_{\perp}({\mathcal{O}})$ to \eqref{eq:Stokes} exists. Furthermore, let us suppose that $\bfu$ and $\pi$ are sufficiently smooth. We will remove this restriction at the end of the proof. 
By assumption there is $\ell\in\mathbb N$ and functions $\varphi_1,\dots,\varphi_\ell\in\bfB^{\theta}_{\rho,p}(\mathbb R^{n-1})$ satisfying \ref{A1}--\ref{A3}.
We clearly find an open set $\mathcal U^0\Subset{\mathcal{O}}$ such that ${\mathcal{O}}\subset \cup_{j=0}^\ell \mathcal U^j$. Finally, we consider a decomposition of unity $(\xi_j)_{j=0}^\ell$ with respect to the covering
$\mathcal U^0,\dots,\mathcal U^\ell$ of ${\mathcal{O}}$. 
For $j\in\{1,\dots,\ell\}$ we consider the extension $\bfPhi_j$ of $\varphi_j$ given by \eqref{est:ext} with inverse $\bfPsi_j$.
 
 Let us fix $j\in\{1,\dots,\ell\}$ and assume, without loss of generality, that the reference point $y_j=0$ and that the outer normal at~$0$ is pointing in the negative $x_n$-direction (this saves us some notation regarding the translation and rotation of the coordinate system).
We multiply $\bfu$ by $\xi_j$ and obtain for $\bfu_j:=\xi_j\bfu$, $\Pi_j:=\xi_j\pi$ and $\bff_j:=\xi_j\bff$ the equation
\begin{align}\label{eq:Stokes2}
\Delta \bfu_j-\nabla\Pi_j=[\Delta,\xi_j]\bfu-[\nabla,\xi_j]\Pi-\bff_j,\quad\Div\bfu_j=\nabla\xi_j\cdot\bfu,\quad\bfu_j|_{\partial{\mathcal{O}}}=0,
\end{align}
with the commutators $[\Delta,\xi_j]=\Delta\xi_j+2\nabla\xi_j\cdot\nabla$ and $[\nabla,\xi_j]=\nabla\xi_j$.
Finally, we set $\bfv_j:=\bfu_j\circ\bfPhi_j$, $\theta_j:=\Pi_j\circ\bfPhi_j$, $\bfg_j:=\mathrm{det}(\nabla\bfPhi_j)([\Delta,\xi_j]\bfu-[\nabla,\xi_j]\Pi-\bff_j)\circ\bfPhi_j$, $h_j=\mathrm{det}(\nabla\bfPhi_j)(\nabla\xi_j\cdot\bfu)\circ\bfPhi_j$ and 
obtain the equations
\begin{align}\label{eq:Stokes3}
\begin{aligned}
&\Div\big(\bfA_j\nabla\bfv_j)-\Div(\mathbf{B}_j\theta_j)=\bfg_j,
\quad\mathbf{B}_j^\top:\nabla\bfv_j=h_j,\quad\bfv_j|_{\partial\mathbb H}=0,
\end{aligned}
\end{align}
where $\bfA_j:=\mathrm{det}(\nabla\bfPhi_j)\nabla\bfPsi_j^\top\circ\bfPhi_j\nabla\bfPsi_j\circ\bfPhi_j$ and $\mathbf{B}_j:=\mathrm{det}(\nabla\bfPhi_j)\nabla\bfPsi_j\circ\bfPhi_j$
 (note that we have $\Div\mathbf{B}_j=0$ due to the Piola identity).
 This can be rewritten as
\begin{align}\label{eq:Stokes3}
\begin{aligned}
\Delta\bfv_j-\nabla\theta_j&=\Div\big((\mathbb I_{n\times n}-\bfA_j)\nabla\bfv_j)+\Div((\mathbf{B}_j-\mathbb I_{n\times n})\theta_j)+\bfg_j,\\
&\Div\bfv_j=(\mathbb I_{n\times n}-\mathbf{B}_j)^\top:\nabla\bfv_j+h_j,\quad\bfv_j|_{\partial\mathbb H}=0.
\end{aligned}
\end{align}


Setting 
\begin{align*}
\mathcal S (\bfv,\theta)&=\mathcal S _1(\bfv)+\mathcal S_2 (\theta),\\
\mathcal S_1 (\bfv)&=\Div\big((\mathbb I_{n\times n}-\bfA_j)\nabla\bfv),\\
\mathcal S_2 (\theta)&=\Div((\mathbf{B}_j-\mathbb I_{n\times n})\theta),\\
\mathfrak s(\bfv)&=(\mathbb I_{n\times n}-\mathbf{B}_j)^\top:\nabla\bfv,
\end{align*} we can finally write
\eqref{eq:Stokes3} as
\begin{align}\label{eq:Stokes3'}
\begin{aligned}
\Delta\bfv_j-\nabla\theta_j=\mathcal S (\bfv_j,\theta_j)+\bfg_j,
\quad\Div\bfv_j=\mathfrak s(\bfv_j)+h_j,\quad\bfv_j|_{\partial\mathbb H}=0,
\end{aligned}
\end{align}
in $\mathbb H.$
Estimates for the Stokes system on the half space are well-known: We apply \cite[Thm. IV 2.1]{Ga} to \eqref{eq:Stokes3'} which yields
\begin{align}\label{eq:0201a}
\|\nabla\bfv_j\|_{L^p_x}+\|\theta_j\|_{L^p_x}\lesssim \|\mathcal S (\bfv_j,\theta_j)+\bfg_j\|_{W^{-1,p}_x}+\|\mathfrak s(\bfv_j)+h_j\|_{L^p_x}.
\end{align}
Similarly, we obtain from \cite[Thm. IV 3.3]{Ga} for $k\geq2$
\begin{align}\label{eq:0201b}
\|\nabla^k\bfv_j\|_{L^p_x}+\|\nabla^{k-1}\theta_j\|_{L^p_x}\lesssim \|\nabla^{k-2}(\mathcal S (\bfv_j,\theta_j)+\bfg_j)\|_{L^{p}_x}+\|\nabla^{k-1}(\mathfrak s(\bfv_j)+h_j)\|_{L^p_x}.
\end{align}
Since $\bfv_j$ is compactly supported (with support included in $\xi_j\circ\bfPhi_j$) we conclude by Poincar\'e's inequality
\begin{align}\label{eq:0201c}
\|\bfv_j\|_{W^{k,p}_x}+\|\theta_j\|_{W^{k-1,p}_x}\lesssim \|\mathcal S (\bfv_j,\theta_j)+\bfg_j\|_{W^{k-2}_x}+\|\mathfrak s(\bfv_j)+h_j\|_{W^{k-1,p}_x}
\end{align}
for all $k\geq 1$. Interpolation implies
\begin{align}\label{eq:0201c}
\|\bfv_j\|_{W^{s,p}_x}+\|\theta_j\|_{W^{s-1,p}_x}\lesssim \|\mathcal S (\bfv_j,\theta_j)+\bfg_j\|_{W^{s-2}_x}+\|\mathfrak s(\bfv_j)+h_j\|_{W^{s-1,p}_x}
\end{align}
for all $s\geq 1$. Our remaining task consists in estimating the right-hand side.
In order to estimate $\|\mathcal S(\bfv,\theta)\|_{W^{s-2,p}_x}$ and $\|\mathfrak s(\bfv)\|_{W^{s-1,p}_x}$ we
use the Sobolev multiplier norm introduced in \eqref{eq:SoMo}. 
By our assumptions on $\varphi_j$ we infer from
 \eqref{eq:MSa} and \eqref{eq:MSb} that $\phi_j\in \mathcal M(W^{s-1/p,p}(\mathbb H))$. Thus $\bfPhi_j\in \mathcal M(W^{s,p}(\mathbb H))$ by \eqref{eq:Phi},  \eqref{eq:MS}, and \eqref{eq:SMp} and $\bfPsi_j\in \mathcal M(W^{s,p}(\mathbb H))$ by \eqref{eq:SMPhiPsi}. Hence we obtain
by \eqref{J}, \eqref{lem:9.4.1} and the definitions of $\bfA_j$ and $\bfPhi_j$ 
\begin{align*}
\|\mathcal S_1(\bfv)\|_{W^{s-2,p}(\mathbb H)}&\lesssim \sup_{\|\bfw\|_{W^{s-1,p}_x}\leq 1}\|(\mathbb I_{n\times n}-\bfA_j)\bfw\|_{W^{s-1,p}(\mathbb H)}\|\nabla\bfv\|_{W^{s-1,p}_x}\\
&\lesssim \sup_{\|\bfw\|_{W^{s-1,p}(\mathbb H)}\leq 1}\|(1-\mathrm{det}(\nabla\bfPhi_j))\bfw\|_{W^{s-1,p}(\mathbb H)}\|\bfv\|_{W^{s,p}_x}\\
&+ \sup_{\|\bfw\|_{W^{s-1,p}_x}\leq 1}\|\mathrm{det}(\nabla\bfPhi_j)(\mathbb I_{n\times n}-\nabla\bfPsi_j^\top\circ\bfPhi_j)\bfw\|_{W^{s-1,p}(\mathbb H)}\|\bfv\|_{W^{s,p}_x}\\
&+ \sup_{\|\bfw\|_{W^{s-1,p}_x}\leq 1}\|\mathrm{det}(\nabla\bfPhi_j)\nabla\bfPsi_j^\top\circ\bfPhi_j(\mathbb I_{n\times n}-\nabla\bfPsi_j\circ\bfPhi_j)\bfw\|_{W^{s-1,p}(\mathbb H)}\|\bfv\|_{W^{s,p}_x}\\
&\lesssim \|\mathcal T\phi_j\|_{\mathcal M(W^{s,p}(\mathbb H))}\|\bfv\|_{W^{s,p}_x}\\&+ \|\bfPhi_j\|_{\mathcal M(W^{s,p}(\mathbb H))}^n\sup_{\|\bfw\|_{W^{s-1,p}_x}\leq 1}\|(\mathbb I_{n\times n}-\nabla\bfPsi_j\circ\bfPhi_j)\bfw\|_{W^{s-1,p}(\mathbb H)}\|\bfv\|_{W^{s,p}_x}\\
&+ \|\bfPhi_j\|_{\mathcal M(W^{s,p}(\mathbb H))}^n\|\bfPsi_j\|_{\mathcal M(W^{s,p}(\mathbb H))}\sup_{\|\bfw\|_{W^{s-1,p}_x}\leq 1}\|(\mathbb I_{n\times n}-\nabla\bfPsi_j\circ\bfPhi_j)\bfw\|_{W^{s-1,p}(\mathbb H)}\|\bfv\|_{W^{s,p}_x}\\
&\lesssim \Big(\|\mathcal T\phi_j\|_{\mathcal M(W^{s,p}(\mathbb H))}+ \sup_{\|\bfw\|_{W^{s-1,p}_x}\leq 1}\|(\mathbb I_{n\times n}-\nabla\bfPsi_j\circ\bfPhi_j)\bfw\|_{W^{s-1,p}(\mathbb H)}\Big)\|\bfv\|_{W^{s,p}_x},
\end{align*}
where 
\begin{align*}
\sup_{\|\bfw\|_{W^{s-1,p}_x}\leq 1}&\|(\mathbb I_{n\times n}-\nabla\bfPsi_j\circ\bfPhi_j)\bfw\|_{W^{s-1,p}(\mathbb H)}\\&=\sup_{\|\bfw\|_{W^{s-1,p}_x}\leq 1}\|(\mathbb I_{n\times n}-\mathrm{det}(\nabla\bfPsi_j)\mathrm{cof}(\nabla\bfPhi_j\circ\bfPhi_j))\bfw\|_{W^{s-1,p}(\mathbb H)} \\&\leq\sup_{\|\bfw\|_{W^{s-1,p}_x}\leq 1}\|(1-\mathrm{det}(\nabla\bfPsi_j))\bfw\|_{W^{s-1,p}(\mathbb H)}\\
&+\sup_{\|\bfw\|_{W^{s-1,p}_x}\leq 1}\|\mathrm{det}(\nabla\bfPsi_j)(\mathbb I_{n\times n}-\mathrm{cof}(\nabla\bfPhi_j\circ\bfPhi_j))\bfw\|_{W^{s-1,p}(\mathbb H)}\\
&\lesssim \|\mathcal T\phi_j\|_{\mathcal M(W^{s,p}(\mathbb H))}\|\bfPsi_j\|_{\mathcal M(W^{s,p}(\mathbb H))}^n\lesssim \|\mathcal T\phi_j\|_{\mathcal M(W^{s,p}(\mathbb H))}.
\end{align*}
So we finally have
\begin{align*}
\|\mathcal S_1(\bfv)\|_{W^{s-2,p}(\mathbb H)}&\lesssim \|\mathcal T\phi_j\|_{\mathcal M(W^{s,p}(\mathbb H))}\|\bfv\|_{W^{s,p}(\mathbb H)}
\end{align*}
and, similarly, 
\begin{align*}
\|\mathcal S_2(\theta)\|_{W^{s-2,p}(\mathbb H)}&\lesssim \sup_{\|\bfw\|_{W^{s-1,p}(\mathbb H)}\leq 1} \|(\mathbf{B}_j-\mathbb I_{n\times n})\bfw\|_{W^{s-1,p}(\mathbb H)}\|\theta\|_{W^{s-1,p}(\mathbb H)}\\
&\lesssim \|\mathcal T\phi_j\|_{\mathcal M(W^{s,p}(\mathbb H))}\|\theta\|_{W^{s-1,p}(\mathbb H)},
\end{align*}
as well as
\begin{align*}
\|\mathfrak s(\bfv)\|_{W^{s-1,p}(\mathbb H)}
&\lesssim \sup_{\|\bfw\|_{W^{s-1,p}(\mathbb H)}\leq 1}\|(\mathbf{B}_j-\mathbb I_{n\times n})\bfw\|_{W^{s-1,p}(\mathbb H)}\|\nabla\bfv\|_{W^{s-1,p}(\mathbb H)}\\
&\lesssim \|\mathcal T\phi_j\|_{\mathcal M(W^{s,p}(\mathbb H))}\|\bfv\|_{W^{s,p}(\mathbb H)},
\end{align*}
By \eqref{eq:MS} we have
\begin{align}\label{eq:MS'}
\|\mathcal T\phi_j\|_{\mathcal M(W^{s,p}(\mathbb H))}\lesssim \|\varphi_j\|_{\mathcal M(W^{s-1/p,p}(\mathbb H))}.
\end{align}
Finally, in the case $p\leq n$ the right-hand side can be bounded by the Lipschitz constant due to by \eqref{eq:MSa} and the embedding $\bfB_{p,p}^{\theta}\hookrightarrow\bfB^{s-1/p}_{\rho,p}$ for $\theta>s-1/p$ and $\varrho$ satisfying \eqref{eq:SMp}. Hence it is conveniently small by our assumption.
If $p>n$ we have by \eqref{eq:MSb}
 \begin{align}\label{eq:MS''}
 \begin{aligned}
 \|\mathcal T\phi_j\|_{\mathcal M(W^{s,p}(\mathbb H))}&\lesssim \|\nabla(\mathcal T\phi_j)\|_{W^{s-1,p}(\mathbb H)}\lesssim \|\varphi_j\|_{W^{s-1/p,p}(\R^{n-1})}\\&\lesssim \|\varphi_j\|_{W^{\theta,p}(\R^{n-1})}^\alpha  \|\varphi_j\|_{W^{1,p}(\R^{n-1})}^{1-\alpha}\lesssim \|\varphi_j\|_{W^{\theta,p}(\R^{n-1})}^\alpha  \|\varphi_j\|_{W^{1,\infty}(\R^{n-1})}^{1-\alpha}
 \end{aligned}
 \end{align}
 for an appropriate choice of $\alpha\in(0,1)$. This is again suitably small. We conclude that
 \begin{align}\label{eq:0301}
 \|\mathcal S(\bfv_j,\theta_j)\|_{W^{s-2,p}_x}+\|\mathfrak s(\bfv)\|_{W^{s-1,p}_x}\leq \delta\big(\|\bfv_j\|_{W^{s,p}(\mathbb H)}+\|\theta_j\|_{W^{s-1,p}(\mathbb H)}\big)
 \end{align}
 for some small $\delta>0$. On the other hand, we have
 \begin{align*}
 \|\bfg_j\|_{W^{s-2,p}_x}&\lesssim \|\bfu\circ\bfPhi_j\|_{W^{s-1,p}_x}+ \|\pi\circ\bfPhi_j\|_{W^{s-2,p}_x}+ \|\bff\circ\bfPhi_j\|_{W^{s-2}_x}\\
& \lesssim \|\bfu\|_{W^{s-1,p}_x}+ \|\pi\|_{W^{s-2,p}_x}+\|\bff\|_{W^{s-2}_x},
 \end{align*}
 where the hidden constant depends on $\mathrm{det}(\nabla\bfPhi_j)$ and  $\|\bfPhi_j\|_{\mathcal M(W^{s,p}(\mathbb H))}$ being controlled by \eqref{eq:detJ},
 \eqref{eq:MS'} and \eqref{eq:MS''} (see \eqref{lem:9.4.1} for the composition with Sobolev multipliers).
 Similarly, we obtain
 \begin{align*}
 \|h_j\|_{W^{s-1,p}_x}&\lesssim \|\bfu\|_{W^{s-1,p}_x}.
 \end{align*}

 Plugging this and \eqref{eq:0301} into \eqref{eq:0201c} shows for all $j\in\{1,\dots,\ell\}$
 \begin{align}\label{almost0}
\|\bfv_j\|_{W^{s,p}_x}+\|\theta_j\|_{W^{s-1,p}_x}\lesssim
 \|\bfu\|_{W^{s-1}_x}+ \|\pi\|_{W^{s-2}_x}+\|\bff\|_{W^{s-2}_x}
\end{align}
provided $\delta$ is sufficiently small. Clearly, the same estimate (even without the first two terms on the right-hand side) holds for $j=0$ by local regularity theory for the Stokes system.
Choosing $s_0\in\R$ such that $W^{1,2}({\mathcal{O}})\hookrightarrow W^{s_0,p}({\mathcal{O}})$, there is $\alpha\in(0,1)$ such that
 \begin{align*}
 \|\bfu\|_{W^{s-1,p}_x}&\leq \|\bfu\|_{W^{s,p}_x}^{\alpha}\|\bfu\|_{W^{s_0,p}_x}^{1-\alpha}\lesssim \|\bfu\|_{W^{s,p}_x}^{\alpha}\|\bfu\|_{W^{1,2}_x}^{1-\alpha}\lesssim\|\bfu\|_{W^{s,p}_x}^{\alpha}\|\bff\|_{W^{-1,2}_x}^{1-\alpha}\lesssim\|\bfu\|_{W^{s,p}_x}^{\alpha}\|\bff\|_{W^{s-2,p}_x}^{1-\alpha}
 \end{align*}
 by the assumption \small$n\big(\frac{1}{p}-\frac{1}{2}\big)+1\leq  s$\normalsize\, and the standard energy estimate for the Stokes system. Hence we obtain
  \begin{align}\label{almost1}
 \|\bfu\|_{W^{s-1,p}_x}&\leq\kappa\|\bfu\|_{W^{s,p}_x}+c(\kappa)\|\bff\|_{W^{s-2,p}_x}
 \end{align}
for any $\kappa>0$. Similarly,
  \begin{align}\label{almost2}
 \|\pi\|_{W^{s-2,p}_x}&\leq\kappa\|\pi\|_{W^{s-1,p}_x}+c(\kappa)\|\bff\|_{W^{s-2,p}_x}
 \end{align}
 using that $\|\pi\|_{L^2_x}\lesssim \|\bff\|_{W^{-1,2}_x}$ as well.
 Plugging \eqref{almost1} and \eqref{almost2} into \eqref{almost0} summing over $j=0,1,\dots,\ell$ and choosing
 $\kappa$ small enough proves the claim provided $\bfu$ and $\pi$ are sufficiently smooth. Let us finally remove this assumption which is not a priori given.
Applying a standard regularisation procedure (by convolution with mollifying kernel) to the functions $\varphi_1,\dots,\varphi_\ell$ from \ref{A1}--\ref{A3} in the parametrisation of $\partial{\mathcal{O}}$ we obtain a smooth boundary. Classically, the solution to the corresponding Stokes system is smooth.
 Such a procedure is standard and has been applied, for instance, in \cite[Section 4]{CiMa}. It is possible to do this in a way that the original domain is included in the regularised domain to which we extend the function $\bff$ by means of an extension operator.
 The regularisation applied to the $\varphi_j's$ converges on all Besov spaces with $p<\infty$. It does not converge on $W^{1,\infty}(\R^{n-1})$, but the regularisation does not expand the $W^{1,\infty}(\R^{n-1})$-norm, which is sufficient. Following the arguments above we obtain
 \eqref{eq:main} for the regularised problem with a uniform constant. The limit passage is straightforward since \eqref{eq:Stokes} is linear.
\end{proof}

\begin{remark}\label{rem:stokes}
In Section \ref{sec:accest} we have to apply Theorem \ref{thm:stokessteady} in the case $n=2$ to the domain $\mathcal O={\Omega}_{\eta(t)}$ for a fixed $t$. We exclude self-intersection and degeneracy by assumption.
 In the framework of Theorem \ref{thm:main}
we have $\eta\in L^\infty(I;W^{2,2}(\omega))$ and ${\Omega}_{\eta(t)}$ is defined in accordance with \eqref{eq:2612}. We must argue that $\partial{\Omega}_\eta\in \bfB^{s}_{2,2}({\mathcal{O}})$ (in the sense of Definition \ref{def:besovboundary}) for some $s>\frac{3}{2}$ and has a small local Lipschitz constant (both uniformly in time). While the Besov regularity is initially clear, we have to introduce local coordinates to control the Lipschitz constant appropriately. Eventually, we must check the Besov regularity again.
Given $x_0\in \partial{\Omega}_{\eta(t)}$ for some $t\in I$ fixed we can rotate the coordinate system such that $\bfn_{\eta(t)}(y(x_0))=(0,1)^\top$ (recall that $\bfn_{\eta(t)}$ is well-defined since $\partial_y\bfvarphi_\eta\neq 0$ by assumption). Accordingly, it holds
\begin{align*}\partial_y\bfvarphi_{\eta(t)}(y(x_0))=\begin{pmatrix}\partial_y\varphi^1_{\eta(t)}(y(x_0))\\\partial_y\varphi^2_{\eta(t)}(y(x_0))\end{pmatrix}=\begin{pmatrix}1\\0\end{pmatrix}.
\end{align*}
Hence the function $\varphi^1_{\eta(t)}$ is invertible in a neighborhood $\mathcal U$ of $y(x_0)$. We define in
$\varphi^1_{\eta(t)}(\mathcal U)$ the function
\begin{align*}
\widetilde\bfvarphi_{x_0}(z)=\begin{pmatrix}z\\ \widetilde\varphi_{x_0}(z))\end{pmatrix}=\begin{pmatrix}z\\ \varphi^2_{\eta(t)}((\varphi^1_{\eta(t)})^{-1}(z))\end{pmatrix}.
\end{align*}
It describes the boundary $\partial{\Omega}_{\eta(t)}$ close to $x_0$.
One easily checks with $z_0=\varphi^1_{\eta(t)}(y(x_0))$ that $\partial_z\widetilde\varphi_{x_0}(z_0)=0$ such that $\partial_z\widetilde\varphi$ is small close to $z_0$. Also, we obtain from the chain rule and the one-dimensional Sobolev embedding that
$\widetilde\varphi_{x_0}\in W^{2,2}$ and hence $\widetilde\varphi_{x_0}\in W^{s,2}=\bfB^s_{2,2}$ for all $s\in(1,2)$ in a neighborhood of $z_0$.
\end{remark}

\begin{remark}
A result in the spirit of Theorem \ref{thm:stokessteady} is proved in \cite[Lemma 3.1]{CS}. However, it only applies in spaces of high regularity with $s\geq 3$ (and only the case $p=2$ is considered) which is too restrictive for our application in Section \ref{sec:reg}.
Moreover, it is assumed that the global parametrisation is a small perturbation of a smooth reference domain (such as the half space). 
First of all, the assumption of a global parametrisation restricts the result to applications ins fluid-structure interaction as generally only local charts are available on bounded domains. Second, global smallness is a small data assumption, while local smallness (as the small Lipschitz constant) can be achieved by local re-parametrisation as in Remark \ref{rem:stokes}.
\end{remark}

\section{Local strong solutions}
\label{sec:strong}
In this subsection we prove the existence of a unique strong solution to \eqref{1}--\eqref{2} which exists locally in time:
\begin{proposition}\label{prop:local}
Suppose that the assumptions of Theorem \ref{thm:main} hold. There is $T^\ast>0$ such that there is a unique strong solution to \eqref{1}--\eqref{2} in $I^\ast=(0,T^\ast)$ in the sense of Definition \ref{def:strongSolution}.
\end{proposition}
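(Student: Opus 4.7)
The plan is to follow the classical Lagrangian-type strategy: transform the fluid equations to the reference geometry via the Hanzawa map $\bfPsi_\eta$ from \eqref{map}, linearise the resulting system around a candidate deformation, and then close a contraction argument on a short time interval. Writing $\bar\bfu=\bfu\circ\bfPsi_\eta$ and $\bar\pi=\pi\circ\bfPsi_\eta$, the momentum equation \eqref{1} becomes a parabolic system on the fixed domain $\Omega$ with coefficients that depend on $\eta$, $\nabla\bfPsi_\eta$, $\partial_t\bfPsi_\eta$, together with the boundary condition $\bar\bfu\circ\bfvarphi=\partial_t\eta\,\bfn$ on $\omega$. The target space for the solution is exactly the space \eqref{eq:www} for $\eta$ together with $\bar\bfu\in W^{1,2}(I;L^2(\Omega))\cap L^2(I;W^{2,2}(\Omega))$ and $\bar\pi\in L^2(I;W^{1,2}(\Omega))$, chosen so that the trace-type constraints are consistent with \eqref{eq:traceembedding}.

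First I would set up the solution map. Given a candidate $\tilde\eta$ in the closed ball
\[
\mathcal B_{T^\ast,R}=\bigl\{\tilde\eta\in W^{1,\infty}(I^\ast;W^{1,2}_y)\cap W^{1,2}(I^\ast;W^{2,2}_y)\cap L^\infty(I^\ast;W^{3,2}_y)\cap W^{2,2}(I^\ast;L^2_y):\|\tilde\eta-\eta_0\|_{\ldots}\le R,\ \tilde\eta(0)=\eta_0,\,\partial_t\tilde\eta(0)=\eta_1\bigr\},
\]
freeze the geometry by $\bfPsi_{\tilde\eta}$ and solve the linear Stokes-type system on $\Omega$ with the transformed Laplacian and divergence constraint, driven by the terms that a Picard iteration would treat as data (the convection term, the commutators generated by the transformation, and the volume force $\bff\circ\bfPsi_{\tilde\eta}$). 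The boundary condition enforces $\bar\bfu\circ\bfvarphi=\partial_t\tilde\eta\,\bfn$ on $\omega$, so the beam equation for the new $\eta$ decouples into a linear viscoelastic problem
\[
\partial_t^2\eta-\partial_t\partial_y^2\eta+\partial_y^4\eta=g-\bfn\,\bar{\bftau}\circ\bfvarphi\,\partial_y\bfvarphi_{\tilde\eta}^\perp\quad\text{on }I^\ast\times\omega,
\]
driven by the Cauchy stress of the just-produced $(\bar\bfu,\bar\pi)$. Maximal $L^2$-regularity for the transformed Stokes problem on the $\tilde\eta$-dependent geometry -- obtained by combining the steady estimate of Theorem \ref{thm:stokessteady} applied pointwise in time (whose hypotheses are verified exactly as in Remark \ref{rem:stokes} since $\tilde\eta\in L^\infty(I^\ast;W^{3,2}_y)\hookrightarrow C(\bar I^\ast;W^{2,2}_y)$ gives $\bfB^{s}_{2,2}$-boundary regularity with small local Lipschitz constant, provided $R$ and $T^\ast$ are small) with standard $L^2$ maximal regularity in time for the heat-type evolution -- yields the required control of $\partial_t\bar\bfu$, $\nabla^2\bar\bfu$ and $\nabla\bar\pi$ in $L^2$. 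In parallel, parabolic maximal regularity for the linear beam operator $\partial_t^2-\partial_t\partial_y^2+\partial_y^4$ yields $\eta$ in the space \eqref{eq:www}; the pressure correction $c_\pi(t)$ coming from the global compatibility \eqref{eq:pressure} is treated as in the statement of Theorem \ref{thm:main}.

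To close the argument I would estimate the map $\mathcal{T}\colon\tilde\eta\mapsto\eta$. For $T^\ast$ sufficiently small and $R$ chosen appropriately in terms of the data, the continuity bounds \eqref{est:psietazeta}--\eqref{est:psi-1etazetas} for $\bfPsi_{\tilde\eta}$ and $\bfPsi_{\tilde\eta}^{-1}$ together with the extension estimates of Lemma \ref{lem:3.8} show that $\mathcal{T}$ maps $\mathcal B_{T^\ast,R}$ into itself: the quadratic convective and commutator terms pick up a small factor $T^\ast$ in a standard way, and the deviation from the initial data in the ambient norms is controlled by the initial energy plus $\sqrt{T^\ast}$ times bounded quantities. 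The same Lipschitz estimates, now applied to differences $\tilde\eta_1-\tilde\eta_2$ of two candidates, give Lipschitz-continuity of $\mathcal T$ with a small constant for $T^\ast\ll 1$; Banach's fixed point theorem then produces a unique fixed point, which is the desired strong solution on $(0,T^\ast)$.

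The two genuinely delicate points I anticipate are the following. First, verifying that $\Omega_{\tilde\eta(t)}$ satisfies uniformly in $t\in I^\ast$ the smallness-of-local-Lipschitz-constant hypothesis of Theorem \ref{thm:stokessteady}: this is where the refined Besov regularity of $\partial\Omega_{\tilde\eta(t)}$ derived in Remark \ref{rem:stokes} is essential, and where the smallness of $R$ and $T^\ast$ must be balanced against the $W^{2,2}_y$-norm of $\eta_0$ so that the local charts of $\partial\Omega_{\tilde\eta(t)}$ remain small perturbations of those of $\partial\Omega$. Second, organising the compatibility of the transformed divergence constraint with the inhomogeneous boundary data $\partial_t\tilde\eta\,\bfn$: as in Section \ref{sec:accintro} the extension from $\omega$ into $\Omega$ is the key, and a Bogovski\u{\i} correction as in \eqref{eq:bog} restores solenoidality without losing the maximal-regularity class. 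Once these two ingredients are in place, the contraction argument is routine and produces the unique local strong solution claimed.
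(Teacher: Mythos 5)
Your proposal takes a genuinely different route from the paper, and this route has two real gaps.

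\emph{Where you depart from the paper.} The paper does not freeze the geometry at the time-dependent candidate $\tilde\eta(t)$; it freezes at the \emph{initial} displacement $\eta_0$. In Lemma~\ref{lem1} the system is transformed to the fixed reference domain $\Omega$, but the leading-order operator is taken with the time-\emph{independent} coefficients $J_{\eta_0}$, $\bfA_{\eta_0}$, $\bfB_{\eta_0}$, and all $\eta$-dependent deviations from these are moved to the right-hand side ($\bfh_\eta$, $\bfH_\eta$, $h_\eta$). Lemma~\ref{lem2} is then a linear autonomous maximal-regularity estimate: one further transforms to $\Omega_{\eta_0}$ via the fixed map $\bfPsi_{\eta_0}^{-1}$ and invokes \emph{classical} parabolic maximal regularity for the Stokes system on a fixed $C^2$ domain (since $\eta_0\in W^{3,2}(\omega)\hookrightarrow C^{2,1/2}(\omega)$). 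Theorem~\ref{thm:stokessteady} is not used at all in the local-existence step; it is only needed in Section~\ref{sec:reg}, where the bootstrapped geometry has only $W^{2,2}$ regularity uniformly in time. Finally, the contraction in Lemma~\ref{lem3} iterates on the full triple $(\zeta,\overline\bfw,\overline q)$, not just on the displacement.

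\emph{Gap 1: non-autonomous maximal regularity.} Freezing at $\tilde\eta(t)$ and transforming by $\bfPsi_{\tilde\eta(t)}$ leaves you with a Stokes-type evolution on $\Omega$ whose leading coefficients depend on $t$. You cannot obtain parabolic maximal $L^2$-regularity for this by simply applying the steady estimate of Theorem~\ref{thm:stokessteady} ``pointwise in time'' and superposing ``standard $L^2$ maximal regularity in time'': the commutators between $\partial_t$ and the time-dependent coefficients must be controlled, and this is precisely a non-autonomous maximal regularity problem requiring, at minimum, some temporal continuity/perturbation machinery that your sketch does not supply. The paper circumvents this entirely by working with the time-independent operator built from $\eta_0$. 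Moreover, for the local step there is no advantage to Theorem~\ref{thm:stokessteady}: since $\tilde\eta\in L^\infty(I^\ast;W^{3,2}_y)$, the domain is already (at least) $C^{2,\alpha}$ at each $t$, so classical theory applies.

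\emph{Gap 2: the fixed point variable.} Iterating only on $\tilde\eta$ does not close the loop. The right-hand side of the momentum equation involves the convective term $(\nabla\overline\bfu)\overline\bfu$ and transformation commutators depending on $\overline\bfu$ and $\overline\pi$; if these are not frozen at candidate $(\overline\bfw,\overline q)$, each iteration becomes a genuinely nonlinear problem and you lose the linear solve that makes the contraction argument clean. This is precisely why the paper sets up the map $\mathscr T_{\eta_0}\colon(\zeta,\overline\bfw,\overline q)\mapsto(\eta,\overline\bfu,\overline\pi)$ on a ball in the product space $\mathscr Y^\ast$ and proves all the Lipschitz estimates in Lemma~\ref{lem3} in these three components simultaneously. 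You should either iterate on the triple or be explicit that your $\mathcal{T}$ hides a further inner nonlinear solve -- but the latter defeats the purpose of the linearisation.
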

 The strategy to prove Proposition \ref{prop:local} is rather standard and similar to previous papers \cite{Le,GraHil,GraHilLe}:
\begin{itemize}
\item We transform the system to the reference domain, cf. Lemma \ref{lem1}.
\item We linearise the system from Lemma \ref{lem1} and obtain estimates for the linearised system, cf. Lemma \ref{lem2}.
\item We construct a contraction map for the linearised problem in Lemma \ref{lem3} (by choosing the end-time small enough) which gives the local solution to \eqref{1}--\eqref{2}.   
\end{itemize}

\subsection{The transformed problem}
For a solution $(\eta,\bfu,\pi)$ to \eqref{1}--\eqref{2} we define $\overline \pi=\pi\circ\bfPsi_\eta$ and $\overline{\bfu}=\bfu\circ\bfPsi_\eta$, where $\bfPsi_\eta$ is defined in \eqref{map}. We also introduce
\begin{align*}
\bfh_\eta(\overline\bfu)&=-(J_{\eta}-J_{\eta_0})\partial_t\overline\bfu- J_\eta(\nabla \bfPsi_\eta^{-1}\circ\bfPsi_\eta\nabla\overline\bfu)\big(\partial_t\bfPsi_\eta^{-1}\circ\bfPsi_\eta+\overline\bfu\big)+J_\eta\bff\circ \bfPsi_\eta^{-1},\\
\bfA_\eta&=J_\eta\big(\nabla \bfPsi_\eta^{-1}\circ\bfPsi_\eta\big)^{\top}\nabla \bfPsi_\eta^{-1}\circ\bfPsi_\eta,\quad\mathbf{B}_\eta=J_\eta\nabla \bfPsi_\eta^{-1}\circ\bfPsi_\eta,\\
\bfH_\eta(\overline\bfu,\overline\pi)&=(\bfA_{\eta_0}-\bfA_\eta)\nabla\overline\bfu-(\mathbf{B}_{\eta_0}-\mathbf{B}_{\eta})\overline\pi,\quad h_\eta(\overline\bfu)=(\mathbf{B}_{\eta_0}-\mathbf{B}_{\eta}):\nabla\overline\bfu,
\end{align*}
where $J_\eta=|\mathrm{det}\nabla\bfPsi_\eta|$.
We see that $(\eta,\overline\bfu,\overline\pi)$ is a strong solution to the coupled system
\begin{align}\label{momref}
J_{\eta_0}\partial_t\overline\bfu+\Div\big(\mathbf{B}_{\eta_0}\overline\pi\big)-\Div\big(\bfA_{\eta_0}\nabla\overline\bfu\big)&=\bfh_\eta(\overline\bfu)-\Div\bfH_\eta(\overline\bfu,\overline\pi),\\
\label{divref}\mathbf{B}_{\eta_0}:\nabla\overline\bfu&=h_\eta(\overline\bfu),\\
\label{beamref}
 \partial_t^2\eta-\partial_t\partial_y^2\eta+\partial_y^4\eta&=-\bfn \big(\bfA_{\eta_0}\nabla\overline\bfu-\mathbf{B}_{\eta_0}\overline\pi\big)\circ\bfvarphi\,\bfn\\
&+\bfn \bfH_\eta(\overline\bfu,\overline\pi)\circ\bfvarphi\,\bfn,
\nonumber
\end{align}
in $I\times\Omega$ and we have
\begin{align}\label{boundaryref'}
\overline\bfu\circ {\bfvarphi} =\partial_t \eta \bfn\quad \text{on}\quad I\times\omega.
\end{align}
 Here equations \eqref{momref} and \eqref{beamref} are understood in the strong sense (satisfied a.a. in $I\times\Omega$ and $I\times\omega$ respectively).

We call $(\eta,\overline\bfu,\overline\pi)$ a strong solution to \eqref{momref}--\eqref{boundaryref'} provided
\begin{align}\label{datasetref}
\begin{aligned}
\eta \in W^{1,\infty} \big(I; W^{1,2}(\omega) \big)\cap W^{1,2} \big(I; W^{2,2}(\omega) \big)\cap  L^\infty \big(I; W^{3,2}(\omega) \big) \cap W^{2,2}(I;L^2(\omega)),\\
 \overline\bfu \in L^\infty \big(I; W^{1,2}(\Omega) \big)\cap  L^2 \big(I; W^{2,2}(\Omega) \big)\cap  W^{1,2} \big(I; L^{2}(\Omega) \big),\quad\overline\pi\in  L^2 \big(I; W^{1,2}(\Omega) \big).
 \end{aligned}
\end{align}
Note that we construct a weak solution to equation \eqref{beamref} meaning we have
\begin{align}\label{beamrefweek}
\begin{aligned}
\int_I \int_\omega \big(\partial_t \eta\, \partial_t\phi&-\partial_t\partial_y\eta\,\partial_y \phi-
 g\, \phi \big)\dy\dt-\int_I\int_\omega \partial_y^2\eta\,\partial_y^2 \phi\dy\dt\\&=-\int_\omega\bfn \big(\bfA_{\eta_0}\nabla\overline\bfu\big)-\mathbf{B}_{\eta_0}\overline\pi\big)\circ\bfvarphi\,\bfn\phi\dy\dt\\
 &+\int_\omega\bfn \bfH_\eta(\overline\bfu,\overline\pi)\circ\bfvarphi\,\bfn\phi\dy\dt
 \end{aligned}
\end{align}
for all $\phi\in C^\infty(\overline I\times\omega)$. However, one can use the regularity properties \eqref{datasetref} to infer
that $\eta\in L^2(I;W^{4,2}(\omega))$ such that all quantities in
\eqref{beamrefweek} are, in fact, $L^2$-functions and we have indeed a strong solution.

We obtain the following characterisation regarding \eqref{momref}--\eqref{boundaryref}.
\begin{lemma}\label{lem1}
Suppose that the dataset $(\bff, g, \eta_0,  \bfu_0, \eta_1)$
satisfies \eqref{dataset} and \eqref{dataset'}. Then $(\eta,\bfu,\pi)$ is a strong solution to \eqref{1}--\eqref{2} (in the sense of Definition \ref{def:strongSolution})
if and only if  $(\eta,\overline\bfu,\overline\pi)$ is a strong solution to \eqref{momref}--\eqref{boundaryref'}.
\end{lemma}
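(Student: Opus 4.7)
The plan is to establish the equivalence via direct change of variables, using the fact that under the regularity \eqref{datasetref} the Hanzawa transform $\bfPsi_\eta$ is a bi-Lipschitz diffeomorphism between $\Omega$ and $\Omega_\eta$, uniformly in time, with enough smoothness for a pointwise chain rule. The key inputs are the bounds \eqref{est:psieta}--\eqref{est:psietast} for $\bfPsi_\eta$ and $\bfPsi_\eta^{-1}$, together with the composition property \eqref{lem:9.4.1}.

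First I would check that the regularity classes correspond. From \eqref{datasetref}, in particular $\eta\in L^\infty(I;W^{3,2}(\omega))$ and $\eta\in W^{1,2}(I;W^{2,2}(\omega))$, together with the one-dimensional Sobolev embedding $W^{3,2}(\omega)\hookrightarrow C^{2,1/2}(\omega)$, we see via \eqref{est:psietas} and \eqref{est:psi-1etas} that $\bfPsi_\eta,\bfPsi_\eta^{-1}\in L^\infty(I;W^{3,2}(\Omega))$, while \eqref{est:psietast} gives the required time regularity for $\partial_t\bfPsi_\eta$. Combined with the composition rule \eqref{lem:9.4.1}, the map $\bfu\mapsto\overline\bfu=\bfu\circ\bfPsi_\eta$ is then a bijection between $L^2(I;W^{k,2}(\Omega_\eta))$ and $L^2(I;W^{k,2}(\Omega))$ for $k\in\{0,1,2\}$, and similarly for the pressure and for the time-derivative spaces $W^{1,2}(I;L^2)$. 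Hence the regularity in Definition~\ref{def:strongSolution} for $(\bfu,\pi)$ is equivalent to that in \eqref{datasetref} for $(\overline\bfu,\overline\pi)$.

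Second, I would verify the pointwise equivalence of the equations by the chain rule. Differentiating $\overline\bfu=\bfu\circ\bfPsi_\eta$ in $t$ gives
\begin{align*}
(\partial_t\bfu)\circ\bfPsi_\eta=\partial_t\overline\bfu+(\nabla\overline\bfu\,\nabla\bfPsi_\eta^{-1}\circ\bfPsi_\eta)(\partial_t\bfPsi_\eta^{-1}\circ\bfPsi_\eta),
\end{align*}
and $(\nabla\bfu)\circ\bfPsi_\eta=\nabla\overline\bfu\,\nabla\bfPsi_\eta^{-1}\circ\bfPsi_\eta$, so that the convective derivative $\partial_t\bfu+(\bfu\cdot\nabla)\bfu$ pulls back to the expression appearing in $\bfh_\eta(\overline\bfu)$. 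Multiplying \eqref{1} by $J_\eta$ and using the Piola identity $\Div\mathbf{B}_\eta=0$, the diffusion and pressure terms transform as $J_\eta(\Delta\bfu-\nabla\pi)\circ\bfPsi_\eta=\Div(\bfA_\eta\nabla\overline\bfu)-\Div(\mathbf{B}_\eta\overline\pi)$, while $\Div\bfu=0$ translates to $\mathbf{B}_\eta:\nabla\overline\bfu=0$. Writing $\bfA_\eta=\bfA_{\eta_0}+(\bfA_\eta-\bfA_{\eta_0})$ and $\mathbf{B}_\eta=\mathbf{B}_{\eta_0}+(\mathbf{B}_\eta-\mathbf{B}_{\eta_0})$ and moving the perturbation terms to the right-hand side produces \eqref{momref} and \eqref{divref}. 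For the boundary, one reads from \eqref{map} with $\phi(0)=1$ that $\bfPsi_\eta\circ\bfvarphi=\bfvarphi_\eta$ on $\omega$, so $\overline\bfu\circ\bfvarphi=\bfu\circ\bfvarphi_\eta=\partial_t\eta\,\bfn$, i.e.\ \eqref{boundaryref'}. The structure equation \eqref{beamref} is obtained analogously: the trace of the Cauchy stress $(\nabla\bfu+\nabla\bfu^\top-\pi\mathbb I_{2\times 2})\bfn_\eta$ on $\partial\Omega_\eta$ pulls back to the trace of $\bfA_\eta\nabla\overline\bfu-\mathbf{B}_\eta\overline\pi$ on $\partial\Omega$ (up to the Jacobian factor $|\partial_y\bfvarphi_\eta|$), and the same splitting gives the claimed form.

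The main obstacle is purely technical: ensuring that all the products and compositions with $\bfPsi_\eta^{\pm 1}$, $\nabla\bfPsi_\eta^{\pm 1}$, $J_\eta^{\pm 1}$ and $\partial_t\bfPsi_\eta$ act continuously on the relevant Sobolev spaces so that the chain rule manipulations above are genuine identities (not just formal). Since $\omega$ is one-dimensional and the assumption \eqref{datasetref} places $\bfPsi_\eta$ in $L^\infty(I;C^{1,1/2}(\Omega))\cap W^{1,2}(I;W^{2,2}(\Omega))$, this is routine using \eqref{est:psieta}--\eqref{est:psi-1etazetas} and \eqref{lem:9.4.1}. Every step in the computation is an equivalence, which yields the ``if and only if'' statement.
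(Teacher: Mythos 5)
Your proposal follows essentially the same route as the paper: pull back by the Hanzawa transform, use the chain rule and the Piola identity $\Div\mathbf B_\eta=0$ to obtain the transformed equations with coefficients $\bfA_\eta,\mathbf B_\eta,J_\eta$, split $\bfA_\eta=\bfA_{\eta_0}+(\bfA_\eta-\bfA_{\eta_0})$ (and similarly for $\mathbf B_\eta,J_\eta$) to recover \eqref{momref}--\eqref{divref}, and invoke $\bfPsi_\eta\circ\bfvarphi=\bfvarphi_\eta$ for the boundary/coupling conditions, with reversibility under the regularity \eqref{datasetref}. Your explicit verification of the regularity correspondence between the two solution classes is a useful addition that the paper leaves implicit, but the argument is otherwise the same.
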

\begin{proof}
Transforming the momentum equation to the reference domain we obtain
\begin{align*}
J_{\eta}\partial_t\overline\bfu+\Div\big(\mathbf{B}_{\eta}\overline\pi\big)-\Div\big(\bfA_{\eta}\nabla\overline\bfu\big)&=- J_\eta(\nabla \bfPsi_\eta^{-1}\circ\bfPsi_\eta\nabla\overline\bfu)\big(\partial_t\bfPsi_\eta^{-1}\circ\bfPsi_\eta+\overline\bfu\big)+\bff\circ \bfPsi_\eta^{-1},\end{align*}
while the incompressibility constraint gives 
$\mathbf{B}_{\eta}^\top:\nabla\overline\bfu=0$. Reordering terms and recalling the definitions
of $\bfB_{\eta}$, $\bfA_{\eta_0}$, $\bff_\eta$ and $\bfH_\eta$ yields
\begin{align}\label{eq:2811}
J_{\eta_0}\partial_t\overline\bfu+\Div\big(\mathbf{B}_{\eta_0}\overline\pi\big)-\Div\big(\bfA_{\eta_0}\nabla\overline\bfu\big)&=\bfh_\eta(\overline\bfu)-\Div\bfH_\eta(\overline\bfu,\overline\pi)\end{align}
and 
\begin{align*}
\bfB_{\eta_0}:\nabla\overline\bfu&=h_\eta(\overline\bfu).
\end{align*}
 Allowing now a couple of test-functions
$(\phi, {\bfphi}) \in C^\infty(\overline{I}\times\omega) \times C^\infty(\overline{I}\times \R^3)$ and $\bfphi\circ\bfvarphi_\eta= \phi{\bfn}$, see Definition \ref{def:weakSolution} (c), rewriting the terms for the fluid equation as above (that is, setting $\overline{\bfphi}=\bfphi\circ\bfPsi_\eta$) and integrating by parts yields
\begin{align*}
\int_I \int_\omega \big(\partial_t \eta\, \partial_t\phi&-\partial_t\partial_y\eta\,\partial_y \phi-
 g\, \phi \big)\dy\dt-\int_I\int_\omega \partial_y^2\eta\,\partial_y^2 \phi\dy\dt\\&=-\int_\omega\bfn \big(\bfA_{\eta_0}\nabla\overline\bfu\big)-\mathbf{B}_{\eta_0}\overline\pi\big)\circ\bfvarphi\,\bfn\phi\dy\dt\\
 &+\int_\omega\bfn \bfH_\eta(\overline\bfu,\overline\pi)\circ\bfvarphi\,\bfn\phi\dy\dt
\end{align*}
due to \eqref{eq:2811}. Note that the terms on the right-hand side are the boundary terms which arise due to the integration by parts.
 This finishes the proof as all the manipulations can be reversed for strong solutions given the regularity of $\overline\bfu$ and $\overline\pi$ assumed in \eqref{datasetref}.
\end{proof}

\subsection{The linearised problem}
We will now consider solutions to the linearised problem for a given right-hand side, that is we analyse for given functions $\bfh,\bfH$ and $h$ the system
\begin{align}\label{momlin}
J_{\eta_0}\partial_t\overline\bfu+\Div\big(\mathbf{B}_{\eta_0}\overline\pi\big)-\Div\big(\bfA_{\eta_0}\nabla\overline\bfu\big)&=\bfh-\Div\bfH,\\
\label{divlin}\mathbf B_{\eta_0}:\nabla\overline\bfu&=h,\\
\nonumber
\partial_t^2\eta-\partial_t\partial_y^2\eta+\partial_y^4\eta&=g-\bfn \big(\bfA_{\eta_0}\nabla\overline\bfu-\mathbf{B}_{\eta_0}\overline\pi\big)\circ\bfvarphi\,\bfn\\
&+\bfn \bfH\circ\bfvarphi\,\bfn,
\label{beamlin}\\\label{boundarylina}
\overline\bfu\circ\bfvarphi& =\partial_t \eta{\bfn}\quad \text{on}\quad I\times\omega,\\
\overline\bfu(0)=\overline\bfu_0,\quad&\eta(0)=\eta_0,\quad\partial_t\eta(0)=\eta_1.\label{boundarylin}
\end{align}
Note that \eqref{momlin}--\eqref{boundarylin} is linear in $(\eta,\overline\bfu,\overline\pi)$ such that we expect strong solutions globally in time belonging to the regularity class specified in \eqref{datasetref}.
\begin{lemma}\label{lem2}
Suppose that the dataset $(\bfh,\bfH,h, g, \eta_0,  \overline{\bfu}_0, \eta_1)$
satisfies
\begin{align}
\nonumber
&\bfh \in L^2\big(I; L^2(\Omega)\big),\quad \bfH \in L^2\big(I; W^{1,2}(\Omega)\big),\quad h\in L^2\big(I; W^{1,2}(\Omega)\big)\cap  W^{1,2}\big(I; W^{-1,2}(\Omega)\big)\cap\{h(0,\cdot)=0\},\\ 
&g \in L^2\big(I; W^{1,2}(\omega)\big), \quad
\eta_0 \in W^{3,2}(\omega) \text{ with } \Vert \eta_0 \Vert_{L^\infty( \omega)} < L,  \quad
\eta_1 \in W^{1,2}(\omega),\label{datasetlin}
\\
& 
\overline\bfu_0\in W^{1,2}(\Omega) \text{ is such that }\overline\bfu_0\circ\bfvarphi =\eta_1 \bfn\text{ and }\mathbf B_{\eta_0}:\nabla\overline\bfu_0=0.\nonumber
\end{align}
Then there is a strong solution to \eqref{momlin}--\eqref{boundarylin} satisfying the estimate
\begin{align}\label{est:lin}
\begin{aligned}
\sup_I\int_\Omega&|\nabla\overline\bfu|^2\dx+\int_I\int_\Omega\big(|\nabla^2\overline\bfu|^2+|\partial_t\overline\bfu|^2+|\overline\pi|^2+|\nabla\overline\pi|^2\big)\dxt\\
 &+\sup_I\int_{\omega}\big(
|\partial_t\partial_y\eta |^2+
|\partial_y^3\eta |^2\big)\dy+\int_I\int_{\omega}
\big(|\partial_t\partial_y^2\eta |^2+|\partial_t^2\eta|^2\big)  \dy\dt\\
&\lesssim \int_\Omega|\nabla\overline\bfu_0|^2\dx+\int_I\int_\Omega\big(|\bfh|^2+|\nabla\bfH|^2+|\nabla h|^2\big)\dxt+\int_I\|\partial_t h\|_{W^{-1,2}(\Omega)}^2\dt\\
&+\int_\omega\big(|\partial_y^3\eta_0|^2+|\partial_y\eta_1|^2\big)\dy+\int_I\int_\omega|\partial_y g|^2\dy\dt\\
&+\overline{\mathcal{E}}(0)+\int_I\int_\Omega|\bfH|^2\dxt+\int_I\int_\omega|g|^2\dy\dt,
\end{aligned}
\end{align}
where the energy $\overline{\mathcal E}$ is given by
\begin{align*}
\overline{\mathcal{E}}(t)&=\frac{1}{2}\int_{\Omega}
 |\overline\bfu(t)|^2   \dx
 +
 \frac{1}{2}\int_{\omega}
|\partial_t\eta |^2  \dy
+ \frac{1}{2}\int_{\omega}
|\partial_y^2\eta |^2  \dy.
\end{align*} 
\end{lemma}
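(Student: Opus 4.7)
The plan is to establish existence of a strong solution to the linear system \eqref{momlin}--\eqref{boundarylin} via a Galerkin scheme adapted to the kinematic coupling, and then derive \eqref{est:lin} in three stages: a basic energy estimate, an acceleration estimate testing with time derivatives, and a slice-wise application of Theorem \ref{thm:stokessteady} that supplies the missing $W^{2,2}$-regularity of $\overline\bfu$ and $W^{1,2}$-regularity of $\overline\pi$. Linearity with the fixed coefficients $\bfA_{\eta_0}, \mathbf{B}_{\eta_0}, J_{\eta_0}$ (inherited from $\eta_0\in W^{3,2}(\omega)$) makes all arguments close globally in $I$.

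To absorb the non-vanishing divergence I subtract a Bogovski\u\i-type correction $\overline\bfw$ solving $\mathbf{B}_{\eta_0}\!:\!\nabla\overline\bfw=h$ on $\Omega$ with $\overline\bfw|_{\partial\Omega}=0$ and $\overline\bfw(0,\cdot)=0$; the hypotheses $h(0,\cdot)=0$ and $h\in L^2_tW^{1,2}_x\cap W^{1,2}_tW^{-1,2}_x$ place $\overline\bfw$ in the appropriate energy class. The shifted unknown $\overline\bfv=\overline\bfu-\overline\bfw$ satisfies $\mathbf{B}_{\eta_0}\!:\!\nabla\overline\bfv=0$ while preserving $\overline\bfv\circ\bfvarphi=\partial_t\eta\,\bfn$. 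A Galerkin scheme on a basis compatible with this coupling (as in \cite{LeRu}) produces approximate solutions, for which testing the momentum equation by $\overline\bfv$ and the beam equation by $\partial_t\eta$ yields the basic energy identity: the fluid stress trace on $\partial\Omega$ paired with $\partial_t\eta$ cancels with its counterpart on the beam side, and the $\overline\bfw$-contributions are absorbed by Young. This supplies $\sup_I\overline{\mathcal E}+\int_I(\|\nabla\overline\bfu\|^2+\|\partial_t\partial_y\eta\|^2)\lesssim \text{data}$.

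The decisive step is the acceleration estimate, obtained by testing the momentum equation by $\partial_t\overline\bfv$ and the beam equation by $\partial_t^2\eta$. These are compatible at the boundary, since differentiating $\overline\bfv\circ\bfvarphi=\partial_t\eta\,\bfn$ in time gives $\partial_t\overline\bfv\circ\bfvarphi=\partial_t^2\eta\,\bfn$, so the stress-trace cancellation persists. After integrating by parts, the viscosity term reassembles into $\tfrac12\tfrac{d}{dt}\!\int\bfA_{\eta_0}\nabla\overline\bfu{:}\nabla\overline\bfu$, the damping term into $\tfrac12\tfrac{d}{dt}\|\partial_y\partial_t\eta\|^2$, and the fourth-order beam term contributes $\tfrac{d}{dt}\langle\partial_y^2\eta,\partial_y^2\partial_t\eta\rangle-\|\partial_y^2\partial_t\eta\|^2$. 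The sign-indefinite piece $-\|\partial_y^2\partial_t\eta\|^2$ is balanced by a secondary test of the beam equation by $-\partial_y^2\partial_t\eta$, paired on the fluid side with an extension having this trace (built via $\mathscr F_{\eta_0}$ from Section \ref{sec:ext}); this simultaneously supplies $\sup_I\|\partial_y^3\eta\|^2+\int_I\|\partial_t\partial_y^2\eta\|^2$. Combined with $g\in L^2_tW^{1,2}_y$, $\eta_1\in W^{1,2}(\omega)$ and $\eta_0\in W^{3,2}(\omega)$, this yields control of $\int_I\|\partial_t\overline\bfu\|^2+\int_I\|\partial_t^2\eta\|^2+\sup_I(\|\nabla\overline\bfu\|^2+\|\partial_y\partial_t\eta\|^2+\|\partial_y^3\eta\|^2)$.

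Finally, to upgrade $\overline\bfu$ to $W^{2,2}(\Omega)$ and $\overline\pi$ to $W^{1,2}(\Omega)$ I invoke Theorem \ref{thm:stokessteady} slice-wise in $t$. Undoing the transformation via $\bfPsi_{\eta_0}$, on $\Omega_{\eta_0}$ the momentum equation is a genuine Stokes system with right-hand side $\bfh-\Div\bfH-J_{\eta_0}\partial_t\overline\bfu$ in $L^2(\Omega_{\eta_0})$ (by the acceleration bound) and boundary datum $\partial_t\eta\,\bfn\in W^{3/2,2}(\partial\Omega_{\eta_0})$ (by the beam estimates and the trace embedding \eqref{eq:traceembedding}). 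By Remark \ref{rem:stokes}, the hypothesis $\eta_0\in W^{3,2}$ places $\partial\Omega_{\eta_0}$ in $\bfB^\theta_{2,2}$ for some $\theta>3/2$ with arbitrarily small local Lipschitz constant after re-parametrisation, so Theorem \ref{thm:stokessteady} applies at $n=p=2$, $s=2$. Pulling the estimate back to $\Omega$ using \eqref{est:psietas}--\eqref{est:psi-1etas}, integrating in time, and invoking Gronwall's lemma close \eqref{est:lin}. The main obstacle is the careful bootstrapping between the acceleration estimate and the elliptic step, together with the verification that the Besov/multiplier hypotheses on $\partial\Omega_{\eta_0}$ from Section \ref{sec:para} are met under merely $W^{3,2}$-regularity of the reference geometry — this is precisely what the parametric framework of Section \ref{sec:para} has been designed for.
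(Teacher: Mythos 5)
Your high-level map -- basic energy test with $(\overline\bfu,\partial_t\eta)$, acceleration test with $(\partial_t\overline\bfu,\partial_t^2\eta)$, a further space-differentiated beam test, and a slice-in-time elliptic estimate for the transformed Stokes problem -- coincides with the paper's strategy, and the reduction to $h=0$ by subtracting a correction with $\bfB_{\eta_0}{:}\nabla\overline\bfw=h$, $\overline\bfw|_{\partial\Omega}=0$, $\overline\bfw(0,\cdot)=0$ is essentially the same idea the paper implements (via $\mathcal A^{-1}_{\eta_0}h$), just placed at the start instead of the end. However, your execution of the third test contains a gap. You propose to pair the beam test $-\partial_t\partial_y^2\eta$ with a fluid test $\mathscr F_{\eta_0}(-\partial_t\partial_y^2\eta\,\bfn)$ to cancel the stress-trace term. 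That pairing has no coercive structure on the fluid side (the inertia, viscosity and pressure pairings with this extension do not form time derivatives of anything useful), and the regularity available for the extension is exactly what you are trying to prove: with $\partial_t\partial_y^2\eta$ a priori only in $L^2_tL^2_y$, Lemma \ref{lem:3.8} gives the extension only in $L^2_tW^{1/2,2}_x$, which is insufficient to integrate against $\partial_t\overline\bfu$ or $\nabla^2\overline\bfu$ -- the argument becomes circular. The paper instead tests \emph{only} the beam equation with $-\partial_t\partial_y^2\eta$ and estimates the resulting boundary stress term $\int_\omega\bfn(\bfA_{\eta_0}\nabla\overline\bfu-\bfB_{\eta_0}\overline\pi)\circ\bfvarphi\,\bfn\,\partial_t\partial_y^2\eta$ directly via the trace theorem, interpolation of $\|\partial_t\eta\|_{W^{3/2,2}_y}$ between $W^{1,2}_y$ and $W^{2,2}_y$, and Young's inequality, producing only $\kappa$-sized multiples of $\|\nabla\overline\bfu\|^2_{W^{1,2}_x}$, $\|\overline\pi\|^2_{W^{1,2}_x}$ and $\|\partial_t\partial_y^2\eta\|^2_{L^2_y}$, which are afterward absorbed once the fluid elliptic estimate on $\Omega_{\eta_0}$ is brought in. You also skip the pressure normalisation: the elliptic Stokes estimate only controls a mean-free $\overline\pi_0$, and the full $L^2_tL^2_x$ control of $\overline\pi$ required in \eqref{est:lin} comes from the decomposition $\overline\pi=\overline\pi_0+c_{\overline\pi}(t)$ where $c_{\overline\pi}$ is pinned down by testing \eqref{beamlin} with the constant $1$; this step is necessary and should be made explicit. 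Two smaller points: since $\eta_0\in W^{3,2}(\omega)\hookrightarrow C^{2,1/2}(\omega)$, the domain $\Omega_{\eta_0}$ is $C^2$, so classical maximal regularity for the Stokes problem already applies and Theorem \ref{thm:stokessteady} is not needed in this lemma (it is essential only in the acceleration estimate on the merely $W^{2,2}$-regular moving domain, Proposition \ref{prop2}); and because the problem \eqref{momlin}--\eqref{boundarylin} is linear with time-independent coefficients, the estimates close by $\kappa$-absorption alone -- Gronwall is neither needed nor appropriate here.
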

\begin{proof}
Let us initially suppose that $h=0$, that is we have
$\bfB_{\eta_0}:\nabla\overline\bfu=0$.
We proceed formally; a rigorous proof can be obtained by working with a Galerkin approximation. Testing with $(\overline\bfu,\partial_t\eta)$, using $\bfB_{\eta_0}:\nabla\overline\bfu=0$, ellipticity of $\bfA_{\eta_0}$ (which follows from $\|\eta_0\|_{L^\infty_y}<L$) as well as the cancellation of the boundary terms due to \eqref{boundarylina}
 yields
\begin{align*}
\overline{\mathcal{E}}(t)
&+\int_0^t
 \int_{\Omega}|\nabla \overline\bfu |^2 \dx\ds+\int_0^t\int_{\omega}|\partial_t\partial_y\eta |^2 \dy\ds
\\
& \leq
 \overline{\mathcal{E}}(0)
 + \int_0^t\int_{\Omega}\overline\bfu\cdot\bfh\dxs + \int_0^t\int_{\Omega}\nabla\overline\bfu:\bfH\dxs
+
\int_0^t\int_\omega g\,\partial_t\eta\dy\ds.
\end{align*}
This implies
\begin{align}\label{apriorilin}
\begin{aligned}
\sup_I&\int_\Omega|\overline\bfu|^2\dx+\int_I\int_\Omega|\nabla\overline\bfu|^2\dx\dt\\&+\sup_I\int_\omega|\partial_t\eta|^2\dy+\sup_I\int_\omega|\partial_y^2\eta|^2\dy+\int_I\int_\omega|\partial_t\partial_y\eta|^2\dy\dt\\
&\lesssim  \overline{\mathcal{E}}(0)+\int_I\int_\Omega\big(|\bfh|^2+|\bfH|^2\big)\dxt+\int_I\int_\omega|g|^2\dy\dt.
\end{aligned}
\end{align}
Similarly, we can test by $(\partial_t\overline\bfu,\partial_t^2\eta)$ noticing that the coefficients in \eqref{momlin} and \eqref{divlin}
are independent of time
and that $\partial_t\overline\bfu$ and $\partial_t^2\eta$ match again at the boundary due to \eqref{boundarylina}. 
We obtain
\begin{align*}
\int_0^t\int_\Omega|\partial_t\overline\bfu|^2\dxs
&+
 \frac{1}{2}\int_{\Omega}|\nabla \overline\bfu|^2 \dx+\frac{1}{2}\int_{\omega}|\partial_t\partial_y\eta |^2 \dy+\int_0^t\int_{\omega}|\partial_t^2\eta |^2 \dy\ds
\\
& =
 \frac{1}{2}\int_{\Omega}|\nabla \overline\bfu_0|^2 \dx
 + \int_0^t\int_{\Omega}\partial_t\overline\bfu\cdot\bfh\dxs -\int_0^t\int_{\Omega}\partial_t\overline\bfu:\Div\bfH\dxs
\\&+\int_\omega\bfn \bfH\circ\bfvarphi\,\bfn\partial_t^2\eta\dy\dt+\frac{1}{2}\int_{\omega}|\partial_y\eta_1|^2 \dy+
\int_0^t\int_\omega g\,\partial_t^2\eta\dy\ds-\int_0^t\int_\omega \partial_y^4\eta\,\partial_t^2\eta\dy\ds,
\end{align*}
such that, using
\begin{align*}
-\int_{0}^t\int_{\omega}\partial_y^4\eta\,\partial_t^2\eta\dy\ds&=\int_{0}^t\int_{\omega}\partial_t(\partial_y^3\eta\,\partial_t\partial_y\eta)\dy\ds+\int_{0}^t\int_\omega|\partial_t\partial_y^2\eta|^2\dy\ds\\
&\leq\tfrac{1}{2}\sup_{I}\int_\omega|\partial_t\partial_y\eta|^2\dy+2\sup_{I}\int_\omega|\partial_y^3\eta|^2\dy+\int_{I}\int_\omega|\partial_t\partial_y^2\eta|^2\dy\ds,
\end{align*}
we have
\begin{align}
\nonumber
\int_I\int_\Omega|\partial_t\overline\bfu|^2\dxt
&+\sup_I
 \int_{\Omega}|\nabla \overline\bfu|^2 \dx+\sup_I\int_{\omega}|\partial_t\partial_y\eta|^2 \dy+\int_I\int_{\omega}|\partial_t^2\eta |^2\dy\dt\\
&\lesssim   \int_{\Omega}|\nabla \overline\bfu_0|^2 \dx+\int_{\omega}|\partial_y\eta_1|^2 \dy+\int_I\int_\Omega\big(|\bfh|^2+|\nabla\bfH|^2\big)\dxt+\int_I\int_{\partial\Omega}|\bfH|^2\,\dd\mathcal H^1\dt\nonumber\\
&+\int_I\int_\omega|g|^2\dy\dt+\sup_{I}\int_\omega|\partial_y^3\eta|^2\dy+\int_{I}\int_\omega|\partial_t\partial_y^2\eta|^2\dy.\label{apriorilin2}
\end{align}
Now we differentiate the structure equation in space by testing with $\partial_t\partial_y^2\eta$ which leads to
\begin{align*}
 \frac{1}{2}\int_{\omega}
|\partial_t\partial_y\eta |^2  \dy
&+ \frac{1}{2}\int_{\omega}
|\partial_y^3\eta |^2  \dy+\int_0^t\int_{\omega}|\partial_t\partial_y^2\eta |^2 \dy\ds\\&= \frac{1}{2}\int_{\omega}
|\partial_y\eta_1 |^2  \dy
+ \frac{1}{2}\int_{\omega}
|\partial_y^3\eta_0 |^2  \dy+\int_0^t\int_\omega g\,\partial_y^2\partial_t\eta\dy\ds
\\&+\int_I\int_\omega\bfn \big(\bfA_{\eta_0}\nabla\overline\bfu\big)-\mathbf{B}_{\eta_0}\overline\pi\big)\circ\bfvarphi\,\bfn\,\partial_t\partial_y^2\eta\dy\dt\\
 &-\int_I\int_\omega\bfn \bfH\circ\bfvarphi\,\bfn\,\partial_t\partial_y^2\eta\dy\dt.
\end{align*}
Let us explain how to control the last two integrals in the above. By the trace theorem, smoothness of $\bfA_{\eta_0}$, $\bfB_{\eta_0}$ and $\bfvarphi$ as well as interpolation we have
\begin{align*}
\int_I\int_\omega&\bfn \big(\bfA_{\eta_0}\nabla\overline\bfu\big)-\mathbf{B}_{\eta_0}\overline\pi\big)\circ\bfvarphi\,\bfn\,\partial_t\partial_y^2\eta\dy\dt\\&\leq\int_{I}\|\bfn \big(\bfA_{\eta_0}\nabla\overline\bfu\big)-\mathbf{B}_{\eta_0}\overline\pi\big)\circ\bfvarphi\,\bfn\|_{W^{1/2,2}(\omega)}\|\partial_t\partial_y^2\eta\|_{W^{-1/2,2}(\omega)}\dt\\&\leq\,c\int_{I}\big(\|\nabla\overline\bfu\|_{W^{1/2,2}(\partial\Omega)}+\|\overline\pi\|_{W^{1/2,2}(\partial\Omega)}\big)\|\partial_t\eta\|_{W^{3/2,2}(\omega)}\dt\\
&\leq\,c\int_{I}\big(\|\nabla\overline\bfu\|_{W^{1,2}(\Omega_\eta)}+\|\overline\pi\|_{W^{1,2}(\Omega)}\big)\|\partial_t\eta\|_{W^{1,2}(\omega)}^{1/2}\|\partial_t\eta\|_{W^{2,2}(\omega)}^{1/2}\dt\\
&\leq\,\kappa\int_{I}\big(\|\nabla\overline\bfu\|_{W^{1,2}_x}^2+\|\overline\pi\|_{W^{1,2}_x}^2\big)\dt+\kappa\int_I\|\partial_t\eta\|_{W^{2,2}_y}^2\dt+c(\kappa)\int_I\|\partial_t\eta\|_{W^{1,2}_y}^2\dt
\end{align*}
In order to controll the pressure we write similarly to \eqref{eq:pressure}
\begin{align*}
\overline\pi=\overline\pi_0+c_{\overline\pi},
\end{align*}
where $(\overline\pi_0)_\Omega=0$ and $c_{\overline\pi}$ is a function of time only. The latter satisfies
\begin{align*}
c_{\overline\pi}(t)\int_{\omega}\bfn\mathbf{B}_{\eta_0}\circ\bfvarphi\bfn\dy&=\int_{\omega}\bfn\big(\bfA_{\eta_0}\nabla\overline\bfu-\mathbf{B}_{\eta_0}\overline\pi_0\big)\circ\bfvarphi\bfn\dy\\&-\int_{\omega}\bfn\bfH\circ\bfvarphi\bfn\dy+\int_\omega\partial_t^2\eta\dy-\int_\omega g\dy
\end{align*}
due to equation \eqref{beamlin}. Noticing that $\bfB_{\eta_0}$ is uniformly elliptic we infer from Poincar\'e's inequality
\begin{align*}
\int_{I}\|\overline\pi\|^2_{W^{1,2}_x}\dt&\lesssim \int_{I}\|\nabla\overline\pi\|^2_{L^{2}_x}\dt+\int_{I}c_{\overline\pi}^2\dt\\
&\lesssim \int_{I}\|\nabla\overline\pi\|^2_{L^{2}_x}\dt+\int_{I}\int_\omega|\partial_t^2\eta|^2\dy\dt+\int_{I}\int_\omega|g|^2\dy\dt\\&+\int_{I^\ast}\|\bfH\|_{L^{2}(\partial\Omega)}^2\dt+\int_{I}\|\overline\pi_0\|_{L^{2}(\partial\Omega)}^2\dt+\int_{I}\|\nabla\overline\bfu\|_{L^{2}(\partial\Omega)}^2\dt.
\end{align*}
By the trace theorem and $(\overline\pi_0)=0$
we obtain
\begin{align*}
\int_{I}\|\nabla\overline\bfu\|_{W^{1/2,2}(\Omega)}^2\dt
&\lesssim\int_{I^\ast}\|\nabla\overline\bfu\|_{W^{1,2}_x}^{2}\dt,\quad \int_{I}\|\bfH\|_{W^{1/2,2}(\Omega)}^2\dt
\lesssim\int_{I^\ast}\|\bfH\|_{W^{1,2}_x}^{2}\dt,\\
\int_{I}\|\overline\pi_0\|_{L^{2}(\partial\Omega)}^2\dt
&\lesssim\int_{I}\|\overline\pi_0\|_{W^{1,2}_x}^{2}\dt\lesssim \int_{I}\|\nabla\overline\pi_0\|_{L^{2}_x}^{2}\dt=\int_{I}\|\nabla\overline\pi\|_{L^{2}_x}^{2}\dt.
\end{align*}
Similarly, it holds
\begin{align*}
-\int_I\int_\omega\bfn \bfH\circ\bfvarphi\,\bfn\,\partial_t\partial_y^2\eta\dy\dt\leq\int_{I}\|\bfH\|_{W^{1,2}_x}^2\dt+\kappa\int_I\|\partial_t\eta\|_{W^{2,2}_y}^2\dt+c(\kappa)\int_I\|\partial_t\eta\|_{W^{1,2}_y}^2\dt
\end{align*}
We conclude
\begin{align*}
 \sup_I\int_{\omega}
|\partial_t\partial_y\eta |^2  \dy
&+ \sup_I\int_{\omega}
|\partial_y^3\eta |^2  \dy+\int_I\int_{\omega}|\partial_t\partial_y^2\eta |^2 \dy\dt\\&\leq\,c(\kappa)\bigg(\int_{\omega}
|\partial_y\eta_1 |^2  \dy
+ \int_{\omega}
|\partial_y^3\eta_0 |^2  \dy+\int_I\int_\omega |g|^2\dy\dt+\overline{\mathcal E}(0)\bigg)
\\&+\kappa \bigg(\int_{I}\|\nabla\overline\bfu\|_{W^{1,2}_x}^{2}\dt+\int_{I}\|\nabla\overline\pi\|_{L^{2}_x}^{2}\dt\bigg)
\end{align*}
where $\kappa>0$ is arbitrary.
Now we consider the fluid equation \eqref{momlin}--\eqref{divlin} and transform it by means of $\bfPsi_{\eta_0}^{-1}$, that is, we set $\underline \pi=\overline\pi\circ\bfPsi_{\eta_0}^{-1}$ and $\underline{\bfu}=\overline\bfu\circ\bfPsi_{\eta_0}^{-1}$. Arguing as in the beginning of the proof of Lemma \ref{lem1} and noticing that this transformation is independent of time we get
 \begin{align}\label{momlin'}
\partial_t\underline\bfu+\nabla\underline\pi-\Delta\underline\bfu&=J_{\eta_0}^{-1}\big(\bfh\circ\bfPsi^{-1}_{\eta_0}-(\Div\bfH)\circ\bfPsi_{\eta_0}^{-1}\big),\quad
 \Div\underline\bfu=0,
\end{align}
in $I\times\Omega_{\eta_0}$ together with
\begin{align}\label{boundaryref}
\underline\bfu\circ {\bfvarphi_{\eta_0}} =\partial_t \eta {\bfn}\quad \text{on}\quad I\times\omega.
\end{align}
Maximal regularity theory for the classical Stokes problem (and smoothness of $\bfvarphi_{\eta_0}$) yields
\begin{align*}
\int_I\int_{\Omega_{\eta_0}}\big(|\nabla^2\underline\bfu|^2+|\partial_t\underline\bfu|^2+|\nabla\underline\pi|^2\big)\dxt&\lesssim \int_I\int_{\Omega_{\eta_0}}\big(|\bfh\circ\bfPsi_{\eta_0}^{-1}|^2+|(\Div\bfH)\circ\bfPsi_{\eta_0}^{-1}|^2\big)\dxt\\
&+\int_I\|\partial_t\eta\|^2_{W^{3/2,2}(\omega)}\dt
\end{align*}
such that, for $\kappa>0$ arbitrary,
\begin{align*}
\int_I\int_{\Omega}\big(|\nabla^2\overline\bfu|^2+|\partial_t\overline\bfu|^2+|\nabla\overline\pi|^2\big)\dxt&\leq\,c\int_I\int_{\Omega}\big(|\bfh|^2+|\nabla\bfH|^2\big)\dxt+c_\kappa\int_I\int_\omega|\partial_t\partial_y\eta|^2\dy\dt\\
&+\kappa\int_I\int_\omega|\partial_t\partial_y^2\eta|^2\dy\dt
\end{align*}
using again interpolation and transforming back to $\Omega$.

Collecting all the estimate and choosing $\kappa$ small enough proves the claim for $h=0$. Let us now explain how to remove this restriction. We consider the steady Stokes-type system
\begin{align}\label{steadyStokes}
\Div\big(\mathbf B_{\eta_0}\overline p\big)-\Div\big(\bfA_{\eta_0}\nabla\overline\bfv\big)=0,\quad
\bfB_{\eta_0}:\nabla\overline\bfv=h,\quad \overline\bfv|_{\partial\Omega}=0,
\end{align}
in $\Omega$ for a given function $h:\Omega\rightarrow\R$.
We denote the solution operator, which maps $h$ to $\overline \bfv$, by $\mathcal A_{\eta_0}^{-1}$ and 
We claim that the estimates
\begin{align}\label{est:steadystokes}
\int_{\Omega}|\nabla\mathcal A^{-1}_{\eta_0}h|^2\dx\lesssim \int_{\Omega}|h|^2\dx,\quad \int_{\Omega}|\nabla^2\mathcal A^{-1}_{\eta_0}h|^2\dx\lesssim \int_{\Omega}|\nabla h|^2\dx,
\end{align}
hold.
Indeed, transforming \eqref{steadyStokes} by means of $\bfPsi_{\eta_0}^{-1}$ (that is, setting $\underline{p}=\overline p\circ\bfPsi_{\eta_0}^{-1}$ and $\underline{\bfv}=\overline\bfv\circ\bfPsi_{\eta_0}^{-1}$) we obtain the system
\begin{align}\label{steadyStokes0}
\nabla\underline p-\Delta\underline\bfv=0,\quad
\Div\underline\bfv=h\circ \bfPsi_{\eta_0}^{-1},\quad \overline\bfv|_{\partial\Omega_{\eta_0}}=0,
\end{align}
in $\Omega_{\eta_0}$. The estimates
\begin{align}\label{est:steadystoke0s}
 \int_{\Omega}|\nabla\underline\bfv|^2\dx\lesssim \int_{\Omega}|h\circ \bfPsi_{\eta_0}^{-1}|^2\dx,\quad\int_{\Omega}|\nabla^2\underline\bfv|^2\dx\lesssim \int_{\Omega}|\nabla (h\circ \bfPsi_{\eta_0}^{-1})|^2\dx,
\end{align}
are classical and yield \eqref{est:steadystokes} by transformation (and smoothness of $\bfPsi_{\eta_0}$).
If $\overline\bfu$ satisfies \eqref{momlin}--\eqref{divlin}
for a given function $h$, then $\overline\bfu-\mathcal A_{\eta_0}^{-1}h$ satisfies the problem with homogenous constraint (note that \eqref{beamlin} and \eqref{boundarylina} do not change as $\mathcal A_{\eta_0}^{-1}h$ vanishes at the boundary) with the additional term $J_{\eta_0}\partial_t\mathcal A_{\eta_0}^{-1}h$ on the right-hand side of the mometum equation. Applying the previously proved estimate for the problem with homogeneous constraint we obtain the additional term
\begin{align*}
\int_I\int_{\Omega}\big(|\partial_t\mathcal A_{\eta_0}^{-1}h|^2+|\nabla^2\mathcal A_{\eta_0}^{-1}h|^2\big)\dxt
&\lesssim \int_I\int_\Omega|\nabla h|^2\dxt+\int_I\|\partial_t h\|_{W^{-1,2}(\Omega)}^2\dt\\
\sup_I\int_{\Omega}|\nabla\mathcal A_{\eta_0}^{-1}h|^2&\lesssim \int_I\int_{\Omega}\big(|\partial_t\mathcal A_{\eta_0}^{-1}h|^2+|\nabla^2\mathcal A_{\eta_0}^{-1}h|^2\big)\dxt\\
&\lesssim \int_I\int_\Omega|\nabla h|^2\dxt+\int_I\|\partial_t h\|_{W^{-1,2}(\Omega)}^2\dt
\end{align*}
using also \eqref{est:steadystokes} and $h(\cdot,0)=0$. The proof is now completed.
\end{proof}

\subsection{The fixed point argument}
We consider now for $(\zeta,\overline\bfw,\overline q)$ given the problem
\begin{align}\label{momfix}
J_{\eta_0}\partial_t\overline\bfu+\Div\big(\mathbf{B}_{\eta_0}\overline\pi\big)-\Div\big(\bfA_{\eta_0}\nabla\overline\bfu\big)&=\bfh_{\zeta}(\overline\bfw)-\Div\bfH_{\zeta}(\overline\bfw,\overline q),\\
\label{divfix}\bfB_{\eta_0}:\nabla\overline\bfu&=h_{\zeta}(\overline\bfw),\\
\label{beamfix}
 \partial_t^2\eta-\partial_t\partial_y^2\eta+\partial_y^4\eta&=-\bfn \big(\bfA_{\eta_0}\nabla\overline\bfu-\mathbf{B}_{\eta_0}\overline\pi\big)\circ\bfvarphi\,\bfn\\
&+\bfn \bfH_{\zeta}(\overline\bfw,\overline q)\circ\bfvarphi\,\bfn,
\nonumber\\\label{boundaryfix}
\overline\bfu\circ\bfvarphi& =\partial_t \eta {\bfn}\quad \text{on}\quad I\times\omega.
\end{align}
We consider the solution map $\mathscr T_{\eta_0}$ which maps $(\zeta,\overline\bfw,\overline q)$ to the solution $(\eta,\overline\bfu,\overline \pi)$ of \eqref{momfix}--\eqref{boundaryfix} (existence of which follows from Lemma \ref{lem2}). Setting $I^\ast=(0,T^\ast)$ for some small $T^\ast>0$ we must prove that it is a contraction on the space \begin{align*}\mathscr Y^\ast:=
 W^{1,\infty} &\big(I^\ast; W^{1,2}(\omega) \big)\cap W^{1,2} \big(I^\ast; W^{2,2}(\omega) \big)\cap  L^\infty \big(I^\ast; W^{3,2}(\omega) \big) \cap W^{2,2}(I^\ast;L^2(\omega))\\
 & \times L^\infty \big(I^\ast; W^{1,2}(\Omega) \big)\cap  W^{1,2} \big(I^\ast; L^{2}(\Omega) \big)\cap  L^2 \big(I^\ast; W^{2,2}(\Omega) \big)\times L^2 \big(I^\ast; W^{1,2}(\Omega)\big)
\end{align*} complemented with the norm
\begin{align*}
\|(\eta,\overline\bfu,\overline \pi)\|_{\mathscr Y^\ast}^2&:=\sup_{I^\ast}\int_\Omega|\overline\bfu|^2\dx+\int_{I^\ast}\int_\Omega|\nabla\overline\bfu|^2\dx\dt+\sup_{I^\ast}\int_{\omega}\big(
|\partial_t\eta |^2+
|\partial_y^2\eta |^2\big)\dy+\int_{I^\ast}\int_{\omega}
|\partial_t\partial_y\eta |^2\dy\dt\\
&+\sup_{I^\ast}\int_\Omega|\nabla\overline\bfu|^2\dx+\int_{I^\ast}\int_\Omega\big(|\nabla^2\overline\bfu|^2+|\partial_t\overline\bfu|^2+|\overline\pi|^2+|\nabla\overline\pi|^2\big)\dxt\\
 &+\sup_{I^\ast}\int_{\omega}\big(
|\partial_t\partial_y\eta |^2+
|\partial_y^3\eta |^2\big)\dy+\int_{I^\ast}\int_{\omega}
\big(|\partial_t\partial_y^2\eta |^2+|\partial_t^2\eta|^2\big)  \dy\dt
\end{align*}
given by the energy estimate from Lemma \ref{lem2}. This is the content of the following lemma in which we denote by $B_{R}^{\mathscr Y^\ast}(0)$ the ball in $\mathscr Y^\ast$ with radius $R$ around the origin.
\begin{lemma}\label{lem3}
Suppose that $(\bff,g,\eta_0,\bfu_0,\eta_1)$ satisfies \eqref{dataset} and \eqref{dataset'}. There are $R\gg1$ and $T^{\ast}\ll1$ such that $\mathscr T_{\eta_0}:B_{R}^{\mathscr Y^\ast}(0)\cap\{\eta(0)=\eta_0\}\rightarrow B_{R}^{\mathscr Y^\ast}(0)\cap\{\eta(0)=\eta_0\}$ is a contraction.
\end{lemma}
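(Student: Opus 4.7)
The plan is to apply Lemma~\ref{lem2} with the dataset $(\bfh_\zeta(\overline\bfw), \bfH_\zeta(\overline\bfw,\overline q), h_\zeta(\overline\bfw), g, \eta_0, \overline\bfu_0, \eta_1)$ and then deduce from the resulting estimate that $\mathscr T_{\eta_0}$ sends $B_R^{\mathscr Y^\ast}(0)\cap\{\eta(0)=\eta_0\}$ to itself and is contractive. The crucial structural point is that for any $(\zeta,\overline\bfw,\overline q)\in B_R^{\mathscr Y^\ast}(0)$ satisfying $\zeta(0)=\eta_0$, the coefficient differences $J_\zeta-J_{\eta_0}$, $\bfA_{\eta_0}-\bfA_\zeta$ and $\mathbf B_{\eta_0}-\mathbf B_\zeta$ vanish at $t=0$ and are small in suitable $L^\infty_{t,x}$-norms for small $T^\ast$. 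Indeed, thanks to the embedding $W^{2,2}(\omega)\hookrightarrow W^{1,\infty}(\omega)$ (since $\omega$ is $1$-dimensional), the bound $\zeta\in W^{1,2}(I^\ast;W^{2,2}(\omega))$ yields
\begin{align*}
\|\zeta-\eta_0\|_{L^\infty(I^\ast;W^{1,\infty}(\omega))}\lesssim (T^\ast)^{1/2}\|\partial_t\zeta\|_{L^2_tW^{2,2}_y}\lesssim (T^\ast)^{1/2}R,
\end{align*}
which, together with \eqref{est:psietazeta}, \eqref{est:psi-1etazeta} and the chain rule, controls $\|\bfA_{\eta_0}-\bfA_\zeta\|_{L^\infty_{t}W^{1,\infty}_x}+\|\mathbf B_{\eta_0}-\mathbf B_\zeta\|_{L^\infty_{t}W^{1,\infty}_x}+\|J_{\eta_0}-J_\zeta\|_{L^\infty_{t,x}}$ by a constant times $(T^\ast)^{1/2}R$.

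For the self-mapping property, I will estimate each source term in the norms required by Lemma~\ref{lem2}. The terms $\|\bfH_\zeta(\overline\bfw,\overline q)\|_{L^2_tW^{1,2}_x}$ and $\|h_\zeta(\overline\bfw)\|_{L^2_tW^{1,2}_x}$ are bounded by $(T^\ast)^{1/2}R\cdot R$ thanks to the smallness just discussed; similarly $\|\bfh_\zeta(\overline\bfw)\|_{L^2_tL^2_x}$ is controlled by $(T^\ast)^{1/2}R^2$ plus the fixed quantity $\|\bff\|_{L^2_tL^2_x}$, using that the worst term $(J_\zeta-J_{\eta_0})\partial_t\overline\bfw$ absorbs the $L^\infty$-smallness of $J_\zeta-J_{\eta_0}$ against $\|\partial_t\overline\bfw\|_{L^2_{t,x}}\leq R$. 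For $\partial_t h_\zeta(\overline\bfw)$, writing $\partial_t(\mathbf B_{\eta_0}-\mathbf B_\zeta):\nabla\overline\bfw+(\mathbf B_{\eta_0}-\mathbf B_\zeta):\nabla\partial_t\overline\bfw$ and noting that the second term pairs $W^{-1,2}$ with the Lipschitz factor $\mathbf B_{\eta_0}-\mathbf B_\zeta$ (small in $L^\infty$) and $\partial_t\overline\bfw\in L^2_{t,x}\hookrightarrow L^2_tW^{-1,2}_x$, while the first uses \eqref{est:psietast} and the smallness argument to produce a factor $(T^\ast)^{1/2}$, gives $\|\partial_t h_\zeta\|_{L^2_tW^{-1,2}_x}\leq (T^\ast)^{1/2}C R^2$. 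The condition $h_\zeta(\overline\bfw)(0,\cdot)=0$ holds since $\mathbf B_{\eta_0}-\mathbf B_{\zeta(0)}=0$. Plugging these bounds into \eqref{est:lin} yields $\|\mathscr T_{\eta_0}(\zeta,\overline\bfw,\overline q)\|_{\mathscr Y^\ast}^2\leq C_\mathrm{data}+C(T^\ast)^{1/2}R^2$, so fixing $R$ large enough compared to the data and then $T^\ast$ small enough in terms of $R$ gives the self-mapping property.

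For contractivity, I linearise the map at the level of differences. Given two inputs $(\zeta_i,\overline\bfw_i,\overline q_i)$, $i=1,2$, with images $(\eta_i,\overline\bfu_i,\overline\pi_i)$, the triple $(\eta_1-\eta_2,\overline\bfu_1-\overline\bfu_2,\overline\pi_1-\overline\pi_2)$ solves the linear system of Lemma~\ref{lem2} with zero initial data and source terms $\bfh_{\zeta_1}(\overline\bfw_1)-\bfh_{\zeta_2}(\overline\bfw_2)$, $\bfH_{\zeta_1}(\overline\bfw_1,\overline q_1)-\bfH_{\zeta_2}(\overline\bfw_2,\overline q_2)$ and $h_{\zeta_1}(\overline\bfw_1)-h_{\zeta_2}(\overline\bfw_2)$. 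Each of these differences splits into a piece linear in $(\overline\bfw_1-\overline\bfw_2,\overline q_1-\overline q_2)$ with a \emph{small} coefficient (of the form $\mathbf B_{\eta_0}-\mathbf B_{\zeta_1}$ etc.) and a piece involving $\zeta_1-\zeta_2$ multiplied by ($\overline\bfw_2,\overline q_2,\partial_t\overline\bfw_2$ which are bounded by $R$); using \eqref{est:psietazeta}--\eqref{est:psi-1etazeta} and the same $L^\infty$-in-time argument as above produces a factor $(T^\ast)^{1/2}$ in front of $\|(\zeta_1-\zeta_2,\overline\bfw_1-\overline\bfw_2,\overline q_1-\overline q_2)\|_{\mathscr Y^\ast}$, where the absence of initial data means no additive constant appears. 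Choosing $T^\ast$ still smaller then yields a Lipschitz constant strictly below~$1$.

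The main technical obstacle is the handling of the divergence source $h_\zeta(\overline\bfw)$, since Lemma~\ref{lem2} requires both $L^2_tW^{1,2}_x$ and $W^{1,2}_tW^{-1,2}_x$ regularity plus vanishing at $t=0$; verifying these for $h_\zeta$ and for the difference $h_{\zeta_1}(\overline\bfw_1)-h_{\zeta_2}(\overline\bfw_2)$, in particular controlling $\partial_t(\mathbf B_{\eta_0}-\mathbf B_\zeta)$, is where Sobolev multiplier estimates and the interpolation of $\zeta-\eta_0$ between $L^\infty_tW^{3,2}_y$ and $W^{1,2}_tW^{2,2}_y$ have to be used carefully. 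Once these ingredients are in place, the rest is a direct application of Lemma~\ref{lem2} and the Banach fixed point theorem on $B_R^{\mathscr Y^\ast}(0)\cap\{\eta(0)=\eta_0\}$, which is a complete metric space in the induced norm.
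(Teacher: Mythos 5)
Your proposal follows the same strategy as the paper: apply Lemma~\ref{lem2} with sources $\bfh_\zeta(\overline\bfw),\bfH_\zeta(\overline\bfw,\overline q),h_\zeta(\overline\bfw)$; exploit $\zeta(0)=\eta_0$ so that the coefficient differences vanish at $t=0$ and become small for small $T^\ast$; translate this into smallness of the sources and hence into the self-mapping and contraction properties of $\mathscr T_{\eta_0}$ on $B_R^{\mathscr Y^\ast}(0)\cap\{\eta(0)=\eta_0\}$.

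There is, however, one imprecision you should repair. You assert that the $L^\infty_t W^{1,\infty}_y$-smallness of $\zeta-\eta_0$ coming from $W^{2,2}(\omega)\hookrightarrow W^{1,\infty}(\omega)$ and the $W^{1,2}(I^\ast;W^{2,2}(\omega))$ bound already controls $\|\bfA_{\eta_0}-\bfA_\zeta\|_{L^\infty_t W^{1,\infty}_x}$ and $\|\mathbf B_{\eta_0}-\mathbf B_\zeta\|_{L^\infty_t W^{1,\infty}_x}$ by $(T^\ast)^{1/2}R$. This is not correct as stated: $\nabla\bfA_\zeta$ and $\nabla\mathbf B_\zeta$ involve $\nabla^2\bfPsi_\zeta$, i.e., $\partial_y^2\zeta$, so the $W^{1,\infty}_x$-norm of the coefficient differences is controlled by $\|\zeta-\eta_0\|_{L^\infty_t W^{2,\infty}_y}$, not by $\|\zeta-\eta_0\|_{L^\infty_t W^{1,\infty}_y}$ (indeed \eqref{est:psietazeta} with $k=2$, $p=\infty$ requires the $W^{2,\infty}_y$-difference). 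Obtaining smallness of $\|\zeta-\eta_0\|_{L^\infty_t W^{2,\infty}_y}$ needs interpolation between $W^{1,2}(I^\ast;W^{2,2}(\omega))$ and $L^\infty(I^\ast;W^{3,2}(\omega))$ (e.g., H\"older-$1/8$ in time with values in $W^{11/4,2}(\omega)\hookrightarrow W^{2,\infty}(\omega)$, as the paper uses), not the one-line fundamental-theorem-of-calculus estimate you write. You do flag this interpolation as a needed ingredient at the end, so I read this as an acknowledged place to fill in rather than a dead end, but the argument as written is internally inconsistent. The paper actually sidesteps full $W^{1,\infty}_x$-smallness by splitting products differently: the terms where $\partial_y^2(\zeta-\eta_0)$ appears are multiplied by $\sup$-in-time norms of $\overline\bfw$ or $\overline q$, picking up a factor $T^\ast$ from the time integral without needing that coefficient \emph{difference} to be small, and small coefficients are only needed against $L^2_t$-norms of second derivatives. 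Either route works; the fix you need is to be precise about which norm of $\zeta-\eta_0$ controls which norm of the transformed coefficients.
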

\begin{proof}
First of all, we choose $R$ sufficiently large compared to the dataset $(\bff,g,\eta_0,\bfu_0,\eta_1)$. We intend to control
the Lipschitz constants of the mappings 
\begin{align}\label{eq3011}
\begin{aligned}
\mathscr Y^\ast\ni(\zeta,\overline\bfw,\overline q)&\mapsto \bfh_\zeta(\overline\bfw)\in L^2(I^\ast;L^2(\Omega)),\\
\mathscr Y^\ast\ni(\zeta,\overline\bfw,\overline q)&\mapsto \bfH_\zeta(\overline\bfw,\overline q)\in L^2(I^\ast;W^{1,2}(\Omega)),\\
\mathscr Y^\ast\ni(\zeta,\overline\bfw,\overline q)&\mapsto h_\zeta(\overline\bfw)\in L^2(I^\ast;W^{1,2}(\Omega))\cap W^{1,2}(I^\ast,W^{-1,2}(\Omega)).
\end{aligned}
\end{align} 
Since all of them map $(\eta_0,0,0)$ to the origin, this will also imply that $\mathscr T_{\eta_0}:B_{R}^{\mathscr Y^\ast}(0)\cap\{\eta(0)=\eta_0\}\rightarrow B_{R}^{\mathscr Y^\ast}(0)$.

As far as $\bfH$ is concerned, we have\footnote{Here and in the remainder of this proof the hidden constants depend on $R$ but are independent of $T^\ast$.}
\begin{align*}
\int_{I^\ast}&\|\bfH_{\zeta_1}(\overline\bfw_1,\overline q_1)-\bfH_{\zeta_2}(\overline\bfw_2,\overline q_2)\|_{W^{1,2}_x}^2\dt\\&\lesssim\int_{I^\ast}\| (\bfA_{\eta_0}-\bfA_{ \zeta_1})(\nabla\overline\bfw_1-\nabla\overline\bfw_2)\|_{W^{1,2}_x}^2\dt+\int_{I^\ast}\| (\bfA_{ \zeta_1}-\bfA_{ \zeta_2})\nabla\overline\bfw_2\|_{W^{1,2}_x}^2\dt\\
&+\int_{I^\ast}\| (\bfB_{ \zeta_1}-\bfB_{\eta_0})(\overline q_1-\overline q_2)\|_{W^{1,2}_x}^2\dt+\int_{I^\ast}\| (\mathbf B_{ \zeta_1}-\mathbf B_{ \zeta_2})\overline q_2\|_{W^{1,2}_x}^2\dt,
\end{align*}
where, by the embeddings $W^{1,2}(I^\ast;L^\infty(\omega))\hookrightarrow L^\infty(I^\ast;L^\infty(\omega))$ and $W^{1,2}(\omega)\hookrightarrow L^{\infty}(\omega)$ as well as \eqref{est:psietazeta}--\eqref{est:psi-1etazeta},
\begin{align*}
\int_{I^\ast}&\| (\bfA_{\eta_0}-\bfA_{ \zeta_1})(\nabla\overline\bfw_1-\nabla\overline\bfw_2)\|_{W^{1,2}_x}^2\dt+\int_{I^\ast}\| (\bfA_{ \zeta_1}-\bfA_{ \zeta_2})\nabla\overline\bfw_2\|_{W^{1,2}_x}^2\dt\\
&\lesssim T^{\ast}\sup_{I^\ast}\|\partial_y^2 \zeta_1-\partial_y^2\eta_0\|_{L^\infty_y}^2\sup_{I^\ast}\|\nabla\overline\bfw_1-\nabla\overline\bfw_2\|^2_{L^2_x}+\sup_{I^\ast}\| \partial_y\zeta_1- \partial_y\eta_0\|_{L^\infty_y}^2\int_{ I^{\ast}}\|\nabla^2\overline\bfw_1-\nabla^2\overline\bfw_2\|^2_{L^2_x}\dt\\
&+ T^{\ast}\sup_{I^\ast}\|\partial_y^2 \zeta_1-\partial_y^2 \zeta_2\|_{L^\infty_y}^2\sup_{I^\ast}\|\nabla\overline\bfw_2\|^2_{L^2_x}+\sup_{I^\ast}\| \partial_y\zeta_1- \partial_y\zeta_2\|_{L^\infty_y}^2\int_{ I^{\ast}}\|\nabla^2\overline\bfw_2\|^2_{L^2_x}\dt\\
&\lesssim T^{\ast}\sup_{I^\ast}\|\nabla\overline\bfw_1-\nabla\overline\bfw_2\|^2_{L^2_x}+T^\ast\int_{ I^{\ast}}\|\nabla^2\overline\bfw_1-\nabla^2\overline\bfw_2\|^2_{L^2_x}\dt\\
&+ T^{\ast}\sup_{I^\ast}\|\partial_y^3 \zeta_1-\partial_y^3 \zeta_2\|_{L^2_y}^2+T^\ast\int_{I^\ast}\|\partial_t \partial_y^2\zeta_1-\partial_t \partial_y^2\zeta_2\|_{L^{2}_y}^2\dt\\
&\lesssim T^\ast\|( \zeta_1,\overline\bfw_1,\overline  q_1)-( \zeta_2,\overline\bfw_2,\overline  q_2)\|_{\mathscr Y^\ast}^2
\end{align*}
as well as
\begin{align*}
\int_{I^\ast}&\| (\mathbf{B}_{ \zeta_1}-\mathbf{B}_{\eta_0})(\overline q_1-\overline q_2)\|_{W^{1,2}_x}^2\dt+\int_{I^\ast}\| (\mathbf{B}_{ \zeta_1}-\mathbf{B}_{ \zeta_2})\overline q_2\|_{W^{1,2}_x}^2\dt\\
&\lesssim\sup_{I^\ast}\|\partial_y^2\zeta_1-\partial_y^2\eta_0\|_{L^4_y}^2\int_{I^\ast}\|\overline q_1-\overline q_2\|^2_{L^4_x}\dt+\sup_{I^\ast}\| \partial_y\zeta_1- \partial_y\eta_0\|_{L^\infty_y}^2\int_{ I^{\ast}}\|\nabla\overline q_1-\nabla\overline q_2\|^2_{L^2_x}\dt\\
&+ \sup_{I^\ast}\|\partial_y^2\zeta_1-\partial_y^2\zeta_2\|_{L^4_y}^2\int_{I^\ast}\|\overline q_2\|^2_{L^4_x}\dt+\sup_{I^\ast}\| \partial_y\zeta_1- \partial_y\zeta_2\|_{L^\infty_y}^2\int_{ I^{\ast}}\|\nabla\overline q_2\|^2_{L^2_x}\dt\\
&\lesssim\sup_{I^\ast}\|\zeta_1-\eta_0\|_{W^{5/2,2}_y}^2\|\overline q_1-\overline q_2\|^2_{W^{1,2}_x}\dt+\sup_{I^\ast}\| \partial_y^2\zeta_1- \partial_y^2\eta_0\|_{L^2_y}^2\int_{ I^{\ast}}\|\nabla\overline q_1-\nabla\overline q_2\|^2_{L^2_x}\dt\\
&+ \sup_{I^\ast}\|\zeta_1-\zeta_2\|_{W^{5/2,2}_y}^2\int_{I^\ast}\|\overline q_2\|^2_{W^{1,2}_x}\dt+\sup_{I^\ast}\| \partial_y^2\zeta_1- \partial_y^2\zeta_2\|_{L^2_y}^2\int_{ I^{\ast}}\|\nabla\overline q_2\|^2_{L^2_x}\dt\\
&\lesssim\sqrt{T^\ast}\int_{I^\ast}\|\overline q_1-\overline q_2\|^2_{W^{1,2}_x}\dt+T^\ast\int_{ I^{\ast}}\|\nabla\overline q_1-\nabla\overline q_2\|^2_{L^2_x}\dt\\
&+ \sqrt{T^\ast}\bigg(\sup_{I^\ast}\|\partial_y^3\zeta_1-\partial_y^3\zeta_2\|^2_{L^{2}_y}+\int_{I^\ast}\|\partial_t\partial_y^2\zeta_1-\partial_t\partial_y^2\zeta_2\|^2_{L^2_y}\dt\bigg)+T^\ast\int_{I^\ast}\| \partial_t\partial_y^2\zeta_1- \partial_t\partial_y^2\zeta_2\|_{L^2_y}^2\dt\\
&\lesssim \sqrt{T^\ast}\|( \zeta_1,\overline\bfw_1,\overline  q_1)-( \zeta_2,\overline\bfw_2,\overline  q_2)\|_{\mathscr Y^\ast}^2.
\end{align*}
Note that we also used the embedding
\begin{align*}
W^{1,2}(I^\ast;W^{2,2})\cap L^\infty(I^\ast;W^{3,2}(\Omega))\hookrightarrow C^{1/4}(I^\ast;W^{5/2,2}(\omega))\hookrightarrow C^{1/4}(I^\ast;W^{2,4}(\omega)).
\end{align*}
Similarly, it holds
\begin{align*}
\int_{I^\ast}&\|h_{\zeta_1}(\overline\bfw_1)-h_{\zeta_2}(\overline\bfw_2)\|_{W^{1,2}_x}^2\dt\\
&\lesssim\int_{I^\ast}\| (\mathbf{B}_{\zeta_0}-\mathbf{B}_{ \zeta_1}):(\nabla\overline\bfw_1-\nabla\overline\bfw_2)\|_{W^{1,2}_x}^2\dt+\int_{I^\ast}\| (\mathbf{B}_{ \zeta_1}-\mathbf{B}_{ \zeta_2}):\nabla\overline\bfw_2\|_{W^{1,2}_x}^2\dt\\&\lesssim T^{\ast}\sup_{I^\ast}\|\nabla\overline\bfw_1-\nabla\overline\bfw_2\|^2_{L^2_x}+T^\ast\int_{ I^{\ast}}\|\nabla^2\overline\bfw_1-\nabla^2\overline\bfw_2\|^2_{L^2_x}\dt\\
&+ T^{\ast}\sup_{I^\ast}\|\partial_y^3 \zeta_1-\partial_y^3 \zeta_2\|_{L^2_y}^2+T^\ast\int_{I^\ast}\|\partial_t \partial_y^2\zeta_1-\partial_t \partial_y^2\zeta_2\|_{L^{2}_y}^2\dt\\
&\lesssim T^\ast\|( \zeta_1,\overline\bfw_1,\overline  q_1)-( \zeta_2,\overline\bfw_2,\overline  q_2)\|_{\mathscr Y^\ast}^2.
\end{align*}
As far as the $W^{1,2}_tW^{-1,2}_x$-norm is concerned, we use the embeddings
\begin{align*}
L^2(I^\ast;W^{3,2}(\omega))\cap W^{2,2}(I^\ast;L^2(\omega))\hookrightarrow W^{1,2}(I^\ast;W^{1,4}(\omega)),\\
W^{1,2}(I^\ast;W^{2,2}(\omega))\cap L^\infty(I^\ast;W^{3,2}(\omega))\hookrightarrow C^{1/8}(I^\ast;W^{11/4,2}(\omega)),
\end{align*}
and obtain
\begin{align*}
\int_{I^\ast}&\|\partial_t(h_{\zeta_1}(\overline\bfw_1)-h_{\zeta_2}(\overline\bfw_2))\|_{W^{-1,2}_x}^2\dt\\
&\lesssim\int_{I^\ast}\|\partial_t\mathbf{B}_{ \zeta_1}:(\nabla\overline\bfw_1-\nabla\overline\bfw_2)\|_{W^{-1,2}_x}^2\dt+\int_{I^\ast}\| \partial_t(\mathbf{B}_{ \zeta_1}-\mathbf{B}_{ \zeta_2}):\nabla\overline\bfw_2\|_{W^{-1,2}_x}^2\dt\\
&+\int_{I^\ast}\|(\mathbf{B}_{\eta_0}-\mathbf{B}_{ \zeta_1}):\partial_t(\nabla\overline\bfw_1-\nabla\overline\bfw_2)\|_{W^{-1,2}_x}^2\dt+\int_{I^\ast}\| (\mathbf{B}_{ \zeta_1}-\mathbf{B}_{ \zeta_2}):\nabla\partial_t\overline\bfw_2\|_{W^{-1,2}_x}^2\dt\\
&\lesssim \sup_{I^\ast}\|\nabla\overline\bfw_1-\nabla\overline\bfw_2\|_{L^{2}_x}^2\int_{I^\ast}(1+\|\partial_t\partial_y\zeta_1\|_{L^4_y}^2)\dt+\sup_{I^\ast}\|\nabla\overline\bfw_2\|_{L^{2}_x}^2\int_{I^\ast}\|\partial_t\partial_y(\zeta_1-\zeta_2)\|_{L^4_y}^2\dt\\
&+\sup_{I^\ast}\|\partial_y^2(\zeta_0-\zeta_1)\|_{L^\infty_y}\int_{I^\ast}\|\partial_t(\overline\bfw_1-\overline\bfw_2)\|_{L^{2}_x}^2\dt+\sup_{I^\ast}\|\partial_y^2(\zeta_1-\zeta_2)\|_{L^\infty_y}\int_{I^\ast}\|\partial_t\overline\bfw_2\|_{L^{2}_x}^2\dt\\
&\lesssim \sqrt{T^{\ast}}\sup_{I^\ast}\|\nabla\overline\bfw_1-\nabla\overline\bfw_2\|^2_{L^2_x}+\sqrt{T^\ast}\sup_{I^\ast}\|\partial_y^3(\zeta_1-\zeta_2)\|_{L^2_y}\bigg(\int_{I^\ast}\|\partial_t\partial_y^2(\zeta_1-\zeta_2)\|^2_{L^2_y}\dt\bigg)^{\frac{1}{2}}\\
&+ \sqrt[4]{T^{\ast}}\sup_{I^\ast}\|\partial_t(\overline\bfw_1-\overline\bfw_2)\|_{L^2}^2\dt+\sqrt[4]{T^\ast}\bigg(\sup_{I^\ast}\|\partial_y^3\zeta_1-\partial_y^3\zeta_2\|^2_{L^{2}_y}+\int_{I^\ast}\|\partial_t\partial_y^2\zeta_1-\partial_t\partial_y^2\zeta_2\|^2_{L^2_y}\dt\bigg)\\
&\lesssim \sqrt[4]{T^\ast}\|( \zeta_1,\overline\bfw_1,\overline  q_1)-( \zeta_2,\overline\bfw_2,\overline  q_2)\|_{\mathscr Y^\ast}^2
\end{align*}
Moreover, we gain using again \eqref{est:psi-1eta} and \eqref{est:psi-1etazeta}
\begin{align*}
\int_{I^\ast}&\|\bfh_{\zeta_1}(\overline\bfw_1,\overline q_1)-\bfh_{\zeta_2}(\overline\bfw_2,\overline q_2)\|_{L^{2}_x}^2\dt\\
&\lesssim\int_{I^\ast}\|(J_{ \zeta_1}-J_{\eta_0})(\partial_t\overline\bfw_1-\partial_t\overline\bfw_2)\|_{L^2_x}^2\dt
+\int_{I^\ast}\|(J_{ \zeta_2}-J_{ \zeta_1})\partial_t\overline\bfw_2\|_{L^2_x}^2\dt\\
&+\int_{I^\ast}\|J_{ \zeta_1}\nabla \bfPsi_{ \zeta_1}^{-1}\circ\bfPsi_{ \zeta_1}(\nabla\overline\bfw_1-\nabla\overline\bfw_2)\partial_t\bfPsi_{ \zeta_1}^{-1}\circ\bfPsi_{ \zeta_1}\|_{L^2_x}^2\dt\\
&+\int_{I^\ast}\|\big(J_{ \zeta_1}\nabla \bfPsi_{ \zeta_1}^{-1}\circ\bfPsi_{ \zeta_1}-J_{ \zeta_2}\nabla \bfPsi_{ \zeta_2}^{-1}\circ\bfPsi_{ \zeta_2}\big)\nabla\overline\bfw_2\partial_t\bfPsi_{ \zeta_1}^{-1}\circ\bfPsi_{ \zeta_1}\|_{L^2_x}^2\dt\\
&+\int_{I^\ast}\|J_{ \zeta_2}\nabla \bfPsi_{ \zeta_2}^{-1}\circ\bfPsi_{ \zeta_2}\nabla\overline\bfw_2\big(\partial_t\bfPsi_{ \zeta_1}^{-1}\circ\bfPsi_{ \zeta_1}-\partial_t\bfPsi_{ \zeta_2}^{-1}\circ\bfPsi_{ \zeta_2}\big)\|_{L^2_x}^2\dt\\
&+\int_{I^\ast}\|J_{ \zeta_1}\nabla \bfPsi_{ \zeta_1}^{-1}\circ\bfPsi_{ \zeta_1}(\nabla\overline\bfw_1\overline\bfw_1-\nabla\overline\bfw_2\overline\bfw_2)\|_{L^2_x}^2\dt\\
&+\int_{I^\ast}\|(J_{ \zeta_1}\nabla \bfPsi_{ \zeta_1}^{-1}\circ\bfPsi_{ \zeta_1}-J_{ \zeta_2}\nabla \bfPsi_{ \zeta_2}^{-1}\circ\bfPsi_{ \zeta_2})\nabla\overline\bfw_2\overline\bfw_2\|_{L^2_x}^2\dt\\
&+\int_{I^\ast}\|J_{\zeta_1}(\bff\circ \bfPsi_{ \zeta_1}^{-1}-\bff\circ \bfPsi_{ \zeta_2}^{-1})\|_{L^2_x}^2\dt+\int_{I^\ast}\|(J_{\zeta_1}-J_{\zeta_2})\bff\circ \bfPsi_{ \zeta_2}^{-1}\|_{L^2_x}^2\dt\\
&\lesssim\sup_{I^\ast}\| \partial_y\zeta_1- \partial_y\eta_0\|_{L^\infty_y}^2\int_{I^\ast}\|\partial_t\overline\bfw_1-\partial_t\overline\bfw_2\|_{L^2_x}^2\dt
+\sup_{I^\ast}\| \partial_y\zeta_2- \partial_y\zeta_1\|^2_{L^\infty_y}\int_{I^\ast}\|\partial_t\overline\bfw_2\|_{L^2_x}^2\dt\\
&+T^\ast \sup_{I^\ast}\big(1+\| \partial_y\zeta_1\|^2_{L^\infty_y}\big)\sup_{I^\ast}\big(1+\|\partial_t \zeta_1\|^2_{L^\infty_y}\big)\sup_{I^\ast}\|\nabla\overline\bfw_1-\nabla\overline\bfw_2\|_{L^2_x}^2\\
&+T^\ast \sup_{I^\ast}\| \partial_y\zeta_1- \partial_y\zeta_2\|_{L^\infty_y}^2\sup_{I^\ast}\big(1+\|\partial_t \zeta_1\|_{L^\infty_y}^2\big)\sup_{I^\ast}\|\nabla\overline\bfw_2\|_{L^2_x}^2\\
&+T^\ast \sup_{I^\ast}\big(1+\| \partial_y\zeta_2\|^2_{L^\infty_y}\big)\sup_{I^\ast}\|\partial_t \zeta_1-\partial_t \zeta_2\|_{L^\infty_y}^2\sup_{I^\ast}\|\nabla\overline\bfw_2\|_{L^2_x}^2\\
&+\sup_{I^\ast}\big(1+\| \partial_y\zeta_1\|_{L^\infty_y}^2\big)\sup_{I^\ast}\|\overline\bfw_1-\overline\bfw_2\|^2_{L^4_x}\int_{I^\ast}\|\nabla\overline\bfw_1\|_{L^4_x}^2\dt\\
&+\sup_{I^\ast}\big(1+\| \partial_y\zeta_1\|_{L^\infty_y}^2\big)\sup_{I^\ast}\|\overline\bfw_1\|^2_{L^4_x}\int_{I^\ast}\|\nabla\overline\bfw_1-\nabla\overline\bfw_2\|_{L^4_x}^2\dt\\
&+\sup_{I^\ast}\| \partial_y\zeta_1- \partial_y\zeta_2\|_{L^\infty_y}^2\sup_{I^\ast}\|\overline\bfw_2\|^2_{L^4_x}\int_{I^\ast}\|\nabla\overline\bfw_2\|_{L^4_x}^2\dt\\
&+T^\ast\sup_{I^\ast}\|\nabla\bff\|_{L^2_x}^2\sup_{I^\ast}\| \zeta_1- \zeta_2\|^2_{L^\infty_y}.
\end{align*}
Finally, using the parabolic embeddings
\begin{align*}
L^\infty(I^\ast,L^2(\Omega))\cap L^2(I^\ast;W^{1,2}(\Omega))&\hookrightarrow L^4(I^\ast;L^4(\Omega)),\\
W^{1,2}(I^\ast;L^\infty(\omega))&\hookrightarrow L^\infty(I^\ast;L^\infty(\omega)),
\end{align*}
we obtain
\begin{align*}
\int_{I^\ast}&\|\bfh_{\zeta_1}(\overline\bfw_1,\overline q_1)-\bfh_{\zeta_2}(\overline\bfw_2,\overline q_2)\|_{L^{2}_x}^2\dt\\
&\lesssim T^\ast\|\partial_t\overline\bfw_1-\partial_t\overline\bfw_2\|_{L^2_x}^2\dt
+T^\ast\int_{I^\ast}\|\partial_t( \partial_y^2\zeta_2- \partial_y^2\zeta_1)\|^2_{L^2_y}\dt\\
&+T^\ast \sup_{I^\ast}\|\nabla\overline\bfw_1-\nabla\overline\bfw_2\|_{L^2_x}^2
+T^\ast \sup_{I^\ast}\| \partial_y^2\zeta_1- \partial_y^2\zeta_2\|_{L^\infty_y}^2+T^\ast\sup_{I^\ast}\|\partial_t \zeta_1-\partial_t \zeta_2\|_{L^\infty_y}^2\\
&+\sqrt{T^\ast}\sup_{I^\ast}\|\overline\bfw_1-\overline\bfw_2\|^2_{W^{1,2}_x}+\sqrt{T^\ast}\bigg(\int_{I^\ast}\|\nabla^2\overline\bfw_1-\nabla^2\overline\bfw_2\|_{L^2_x}^2\dt+\sup_{I^\ast}\|\nabla\overline\bfw_1-\nabla\overline\bfw_2\|_{L^2_x}^2\bigg)\\
&+T^\ast\sup_{I^\ast}\| \zeta_1- \zeta_2\|^2_{L^\infty_y}\\
&\lesssim \sqrt{T^\ast}\|( \zeta_1,\overline\bfw_1,\overline  q_1)-( \zeta_1,\overline\bfw_2,\overline  q_2)\|_{\mathscr Y^\ast}^2
\end{align*}
In conclusion, the Lipschitz constants of the mappings in \eqref{eq3011} can be made arbitrarily small if we choose $T^\ast$ appropriately. Combining this observation with the estimate from Lemma \ref{lem2} gives the claim.
\end{proof}

\begin{proof}[Proof of Proposition \ref{prop:local}]
Combining Lemmas \ref{lem1}--\ref{lem3} yields the claim by a standard fixed point argument.
\end{proof}

\section{Regularity estimates}
\label{sec:reg}
This section is devoted to the proof of Theorem \ref{thm:main} for the fluid-structure-interaction problem \eqref{1}--\eqref{2}. With the local strong solution from Proposition \ref{prop:local} at hand we have a sufficiently smooth object such that the following computations are well-defined.
The heart of our analysis is an
acceleration estimate in Proposition \ref{prop2}. It implies that there is no blow-up in finite time such that the global solution can be constructed by gluing local solutions together.

Let us start with
the standard energy estimate
(which is even satisfied by weak solutions).
Let $(\eta,\bfu,\pi)$ be the unique local strong solution from Proposition \ref{prop:local}. We choose
$(\bfu,\partial_t\eta)$ as a test-function in the weak formulation, cf. Definition \ref{def:weakSolution} (c). This yields
\begin{align}\label{eq:energy}
\begin{aligned}
\mathcal{E}(t)
&+\int_0^t
 \int_{\Omega_{\eta}}|\nabla \bfu |^2 \dx\ds+\int_0^t\int_{\omega}|\partial_t\partial_y\eta |^2 \dy\ds
\\
& \leq
 \mathcal{E}(0)
 + \int_0^t\int_{\Omega_\eta}\bfu\cdot\bff\dxs
+
\int_0^{t}\int_\omega g\,\partial_t\eta\dy\ds,\\
\mathcal{E}(t)&=\frac{1}{2}\int_{\Omega_{\eta(t)}}
 |\bfu(t)|^2   \dx
 +
 \frac{1}{2}\int_{\omega}
|\partial_t\eta(t) |^2  \dy
+ \frac{1}{2}\int_{\omega}
|\partial_y^2\eta(t) |^2  \dy
\end{aligned}
\end{align} 
and hence
\begin{align*}
\sup_{I^\ast}\mathcal E(t)&+ \int_{I^\ast}\int_{\Omega_{\eta}}|\nabla \bfu |^2 \dx\dt+\int_{I^\ast}\int_{\omega}|\partial_t\partial_y\eta |^2 \dy\dt\\&\lesssim \mathcal E(0)+\int_{I^\ast}\int_{\Omega_\eta}|\bff|^2\dxt+\int_{I^\ast}\int_\omega|g|^2\dy\dt=:C_0
\end{align*}
for all $t\in I^\ast=(0,T^\ast)$.
This implies
\begin{align}
\sup_{I^\ast}\|\bfu\|_{L^2_x}^2+\int_{I^\ast}\|\nabla\bfu\|_{L^2_x}^2\dt\lesssim\,C_0,\label{eq:aprioriu}\\
\sup_{I^\ast}\|\partial_t\eta\|_{L^2_y}^2+\sup_{I^\ast}\|\nabla^2\eta\|_{L^2_y}^2+\int_{I^\ast}\|\partial_t\partial_y\eta\|_{L^2_y}^2\dt\lesssim\,C_0.\label{eq:apriorieta}
\end{align}

\subsection{The acceleraton estimate}\label{sec:accest}
The acceleration estimate is the heart of our analysis. It heavily relies on the elliptic estimate for the Stokes system in irregular domains given in Theorem \ref{thm:stokessteady}. A further difference to \cite{GraHil} is that we cannot work with a solenoidal extension operator (as explained in Section \ref{sec:accintro}).
Hence we must estimate the pressure function.
\begin{proposition}\label{prop2}
Suppose that the assumptions of Theorem \ref{thm:main} hold and let $(\eta,\bfu,\pi)$ be the unique local strong solution from Proposition \ref{prop:local}. Then we have the estimate
\begin{align}
\sup_{I^\ast}\int_{\Omega_\eta}&|\nabla\bfu|^2\dx+\int_{I^\ast}\int_{\Omega_\eta}\big(|\nabla^2\bfu|^2+|\partial_t\bfu|^2+|\nabla\pi|^2\big)\dxt\nonumber\\
 &+\sup_{I^\ast}\int_{\omega}\big(
|\partial_t\partial_y\eta |^2+
|\partial_y^3\eta |^2\big)\dy+\int_{I^\ast}\int_{\omega}
\big(|\partial_t\partial_y^2\eta |^2+|\partial_t^2\eta|^2\big)  \dy\dt\label{est:reg}\\
&\lesssim\int_{\Omega_{\eta_0}}|\nabla\bfu_0|^2\dx
+\int_\omega\big(|\partial_y^3\eta_0|^2+|\partial_y\eta_1|^2\big)\dy+\int_{I^\ast}\int_\omega|\partial_y g|^2\dy\dt+1,
\nonumber
\end{align}
where the hidden constant depends on $C_0$.
\end{proposition}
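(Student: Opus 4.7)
The plan is to test the momentum equation in its strong form \eqref{1'} with the quasi-material derivative $\bfphi := \partial_t\bfu + (\mathscr F_\eta(\partial_t\eta\bfn)\cdot\nabla)\bfu$, and the beam equation \eqref{2'} with $\phi := \partial_t^2\eta$, then add. Time-differentiating the kinematic coupling $\bfu\circ\bfvarphi_\eta = \partial_t\eta\bfn$ yields $\bfphi\circ\bfvarphi_\eta = \partial_t^2\eta\bfn = \phi\bfn$, so after integration by parts the surface integrals involving the Cauchy stress in the momentum equation cancel exactly with the boundary forcing of the beam equation (modulo the transformation Jacobian $\partial_y\bfvarphi_\eta$, which is smooth in $\eta$). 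To capture the highest spatial derivative of $\eta$, I would supplement this by a second test of the beam equation with $\partial_t\partial_y^2\eta$, which, after integration by parts, produces $\|\partial_y g\|^2_{L^2_{t,y}}$ and $|\partial_y\eta_1|^2$ on the right-hand side as required by \eqref{est:reg}.

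The main positive contributions are $\int_{I^\ast}\|\partial_t\bfu\|_{L^2_x}^2$ and $\sup_{I^\ast}\|\nabla\bfu\|_{L^2_x}^2$ from the fluid, $\int_{I^\ast}\|\partial_t^2\eta\|_{L^2_y}^2$ and $\sup_{I^\ast}\|\partial_t\partial_y\eta\|_{L^2_y}^2$ from the shell inertia and viscoelasticity. The quartic stiffness against $\partial_t^2\eta$ is treated by
\begin{align*}
-\int_0^t\!\!\int_\omega \partial_y^4\eta\,\partial_t^2\eta\,\dy\ds = \Big[\int_\omega\partial_y^3\eta\,\partial_t\partial_y\eta\,\dy\Big]_0^t + \int_0^t\!\!\int_\omega|\partial_t\partial_y^2\eta|^2\,\dy\ds,
\end{align*}
where the boundary bracket is absorbed into the $\sup_I\|\partial_y^3\eta\|_{L^2_y}^2$-term delivered by the auxiliary test against $\partial_t\partial_y^2\eta$. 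Convective contributions $\int(\bfu\cdot\nabla)\bfu\cdot\bfphi$ and the mixed term $\int \partial_t\bfu\cdot(\mathscr F_\eta(\partial_t\eta\bfn)\cdot\nabla)\bfu$ are controlled by H\"older, Ladyzhenskaya in 2D, and the mapping properties of $\mathscr F_\eta$ from Lemma \ref{lem:3.8}, producing subcritical powers absorbable by Young's inequality.

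The novelty is the pressure control, since $\bfphi$ is not solenoidal: pairing $\nabla\pi$ against $\bfphi$ leaves a defect $-\int_{\Omega_\eta}\pi\,\Div\big((\mathscr F_\eta(\partial_t\eta\bfn)\cdot\nabla)\bfu\big)$. I would close this by reading \eqref{1'} at fixed $t$ as a steady Stokes system on $\Omega_{\eta(t)}$ with forcing $\partial_t\bfu+(\bfu\cdot\nabla)\bfu-\bff\in L^2$ and boundary datum $\partial_t\eta\bfn\circ\bfvarphi_{\eta(t)}^{-1}\in W^{3/2,2}(\partial\Omega_{\eta(t)})$, and invoking Theorem \ref{thm:stokessteady} with $n=p=2$, $s=2$ to obtain, uniformly in $t$,
\begin{align*}
\|\bfu(t)\|_{W^{2,2}(\Omega_{\eta(t)})} + \|\pi(t)\|_{W^{1,2}(\Omega_{\eta(t)})} \lesssim \|\partial_t\bfu(t)\|_{L^2_x} + \|(\bfu\cdot\nabla)\bfu(t)\|_{L^2_x} + \|\bff(t)\|_{L^2_x} + \|\partial_t\eta(t)\|_{W^{3/2,2}(\omega)}.
\end{align*}
The datum is interpolated as $\|\partial_t\eta\|_{W^{3/2,2}_y}\lesssim\|\partial_t\eta\|_{W^{1,2}_y}^{1/2}\|\partial_t\eta\|_{W^{2,2}_y}^{1/2}$, keeping it absorbable, and the time-only constant in the pressure decomposition is handled through \eqref{eq:pressure} exactly as in the proof of Lemma \ref{lem2}. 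Feeding this back into the $\pi$-defect and absorbing into the good $\|\nabla\pi\|_{L^2_{t,x}}^2$, $\|\nabla^2\bfu\|_{L^2_{t,x}}^2$, and $\|\partial_t\partial_y^2\eta\|_{L^2_{t,y}}^2$ terms, then applying Gr\"onwall's lemma in combination with the energy bounds \eqref{eq:aprioriu}--\eqref{eq:apriorieta}, yields \eqref{est:reg}.

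The main obstacle is verifying the hypotheses of Theorem \ref{thm:stokessteady} uniformly in time given only $\eta\in L^\infty(I^\ast;W^{2,2}(\omega))$. This is precisely the content of Remark \ref{rem:stokes}: at each $x_0\in\partial\Omega_{\eta(t)}$ one rotates axes so that $\partial_y\bfvarphi_{\eta(t)}(y(x_0))$ is horizontal, inverts $\varphi^1_{\eta(t)}$ locally to obtain a graph $\widetilde\varphi_{x_0}\in W^{2,2}\hookrightarrow\bfB^s_{2,2}$ for any $s\in(3/2,2)$ (which fits \eqref{eq:SMp} with $n=p=\varrho=2$), and makes the local Lipschitz constant small by shrinking the chart radius, exploiting $\partial_z\widetilde\varphi_{x_0}(z_0)=0$. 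The uniform $W^{2,2}$-bound on $\eta$ ensures that a finite, time-independent number of such charts suffices, which is what makes the Stokes constant uniform in $t$ and the whole scheme go through.
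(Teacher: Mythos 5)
Your proposal matches the paper's own proof essentially step for step: same test pair $\bigl(\partial_t\bfu + (\mathscr F_\eta(\partial_t\eta\bfn)\cdot\nabla)\bfu,\, \partial_t^2\eta\bigr)$, same supplementary test with $\partial_t\partial_y^2\eta$, same integration-by-parts identity for the stiffness term, same use of Theorem~\ref{thm:stokessteady} at fixed $t$ (with $n=p=2$, $s=2$ and the interpolation $\|\partial_t\eta\|_{W^{3/2,2}_y}\lesssim\|\partial_t\eta\|_{W^{1,2}_y}^{1/2}\|\partial_t\eta\|_{W^{2,2}_y}^{1/2}$) to close the pressure defect via \eqref{eq:pressure}, same appeal to Remark~\ref{rem:stokes} to verify the hypotheses of the Stokes estimate, and a closing Gr\"onwall. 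The only item the paper spells out and your sketch elides is the boundary contribution from Reynolds' transport theorem when producing $\sup_{I^\ast}\int_{\Omega_\eta}|\nabla\bfu|^2\dx$ (term $\mathrm{II}$ in the paper's inequality \eqref{eq:0302}), which also needs the trace theorem and Theorem~\ref{thm:stokessteady} to control, but this is estimated in the same spirit as your other boundary and convective terms and does not change the architecture.
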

\begin{proof}
We aim at testing the structure equation with $\partial_t^2\eta$ and seek for an appropriate test-function for the momentum equation. Due to the condition $\bfu(t,x+\eta\bfn)=\partial_t\eta\bfn$ at the boundary we have
\begin{align*}
\frac{\D}{\D t}\bfu&=\partial_t\bfu(t,x+\eta\bfn)+\bfu(t,x+\eta\bfn)\cdot\nabla\bfu(t,x+\eta\bfn)\\
&=\partial_t\bfu(t,x+\eta\bfn)+\partial_t\eta\bfn\cdot\nabla\bfu(t,x+\eta\bfn)
\end{align*}
such that the material derivative of the velocity field is the corresponding test-function for the momentum equation.
We use
\begin{align*}
\bfphi&=\partial_t\bfu+\mathscr F_\eta(\partial_t\eta\bfn)\cdot\nabla\bfu
\end{align*}
and $\phi=\partial_t^2\eta$ as test-function. Here $\mathscr F_\eta$ is the extension operator introduced in Section \ref{sec:ext}. Note that we have
\begin{align}
\|\mathscr F_\eta(\partial_t\eta\bfn)\|_{L^2_x}&\lesssim \|\partial_t\eta\|_{L^2_y},\quad
\|\mathscr F_\eta(\partial_t\eta\bfn)\|_{W^{1,2}_x}\lesssim \|\partial_t\eta\|_{W^{1,2}_y},\label{eq:feta}
\end{align}
as a consequence of Lemma \ref{lem:3.8}.
 From the momentum equation in the strong form \eqref{1'} we obtain
\begin{align*}
\int_{I^\ast}\int_{\Omega_\eta}&\big(\partial_t\bfu+\bfu\cdot\nabla\bfu\big)\cdot\big(\partial_t\bfu+\mathscr F_\eta(\partial_t\eta\bfn)\cdot\nabla\bfu\big)\dxt\\
&=\int_{I^\ast}\int_{\Omega_\eta}\Div\bftau\cdot\big(\partial_t\bfu+\mathscr F_\eta(\partial_t\eta\bfn)\cdot\nabla\bfu\big)\dxt+\int_{I^\ast}\int_{\Omega_\eta}\bff\cdot\big(\partial_t\bfu+\mathscr F_\eta(\partial_t\eta\bfn)\cdot\nabla\bfu)\big)\dxt,
\end{align*}
where $\bftau=\nabla\bfu+\nabla\bfu^\top-\pi\mathbb I_{2\times 2}$ is the Cauchy stress.
On the other hand, from the structure equation \eqref{2'} multiplied by $\partial_t^2\eta$,
and the formal computation
\begin{align*}
\int_{0}^t\int_{\omega}\partial_y^2\eta\,\partial_t^2\partial_y^2\eta\dy\ds&=-\int_{0}^t\int_{\omega}\partial_t(\partial_y^3\eta\,\partial_t\partial_y\eta)\dy\ds+\int_{0}^t\int_\omega|\partial_t\partial_y^2\eta|^2\dy\ds\\
&\leq\tfrac{1}{2}\sup_{I^\ast}\int_\omega|\partial_t\partial_y\eta|^2\dy+2\sup_{I^\ast}\int_\omega|\partial_y^3\eta|^2\dy+\int_{I^\ast}\int_\omega|\partial_t\partial_y^2\eta|^2\dy\ds
\end{align*}
we obtain
\begin{align*}
\int_{I^\ast}&\int_\omega|\partial_t^2\eta|^2\dy\dt+\sup_{I^\ast}\int_\omega|\partial_t\partial_y\eta|^2\dy\dt\\&\lesssim\int_\omega|\partial_y\eta_1|^2\dy+\sup_{I^\ast}\int_\omega|\partial_y^3\eta|^2\dy+\int_{I^\ast}\int_\omega|\partial_t\partial_y^2\eta|^2\dy+\int_{I^\ast}\int_\omega(g+\bfF)\,\partial_t^2\eta\dy\dt
\end{align*}
with $\bfF=-\bfn \bftau\circ\bfvarphi_\eta\partial_y\bfvarphi^\perp_\eta$.
Combining both, using Reynold's transport theorem (applied to $\int_{\Omega_{\eta(t)}}|\nabla\bfu(t)|^2\dx$) and Young's inequality and writing
$$\mathscr F_\eta(\partial_t\eta\bfn)\cdot\nabla\bfu=\bfu\cdot\nabla\bfu+\mathscr F_\eta(\partial_t\eta\bfn)\cdot\nabla\bfu-\bfu\cdot\nabla\bfu$$
gives
\begin{align}\label{eq:0302}
\begin{aligned}
\sup_{I^\ast}&\int_{\Omega_\eta}|\nabla\bfu|^2\dx+\int_{I^\ast}\int_{\Omega_\eta}|\partial_t\bfu+\bfu\cdot\nabla\bfu|^2
+\int_{I^\ast}\int_\omega|\partial_t^2\eta|^2\dy\dt+\sup_{I^\ast}\int_\omega|\partial_t\partial_y\eta|^2\dy\dt\\
&\lesssim\int_{I^\ast}\int_{\Omega_\eta}|\bfu\cdot\nabla\bfu|^2\dxt+\int_{I^\ast}\int_{\partial\Omega_\eta}(\partial_t\eta\bfn)\circ\bfvarphi_\eta^{-1}\cdot\bfn_\eta\circ\bfvarphi_\eta^{-1}|\nabla\bfu|^2\dd\mathcal H^1\dt\\&-\int_{I^\ast}\int_{\Omega_\eta}\big(\partial_t\bfu+\bfu\cdot\nabla\bfu\big)\cdot\big(\mathscr F_\eta(\partial_t\eta\bfn)\cdot\nabla\bfu\big)\dx\\
&-\int_{I^\ast}\int_{\Omega_\eta}\nabla\bfu:\big(\mathscr F_\eta(\partial_t\eta\bfn)^\top\nabla^2\bfu+\nabla\mathscr F_\eta(\partial_t\eta\bfn)\nabla\bfu^\top\big)\dxt\\
&+\int_{I^\ast}\int_{\Omega_\eta}\pi\,\Div\big(\mathscr F_\eta(\partial_t\eta\bfn)\nabla\bfu\big)\dxt+\int_{I^\ast}\int_{\Omega_\eta}|\bff|^2\dxt+\int_{\Omega_{\eta_0}}|\nabla\bfu_0|^2\dx\\
&+\int_\omega|\partial_y\eta_1|^2\dy+\sup_{I^\ast}\int_\omega|\partial_y^3\eta|^2\dy+\int_{I^\ast}\int_\omega|\partial_t\partial_y^2\eta|^2\dy\dt+\int_{I^\ast}\int_\omega|g|^2\dy\dt\\
&=:\mathrm{I}+\dots+\mathrm{XI}.
\end{aligned}
\end{align}
In order to control the first term we make use of Theorem \ref{thm:stokessteady}. Due to \eqref{eq:apriorieta} its application can be justified by Remark \ref{rem:stokes}.
We estimate for $\kappa>0$ arbitrary by Ladyshenskaya's inequality (recalling that $\partial\Omega_\eta$ is Lipschitz uniformly in time by  \eqref{eq:apriorieta}) and \eqref{eq:aprioriu}
\begin{align}\label{eq:reg1}\begin{aligned}
\mathrm{I}&\leq \int_{I^\ast}\|\bfu\|^2_{L^4_x}\|\nabla\bfu\|_{L^4_x}^2\dt\leq\,c\int_{I^\ast}\|\bfu\|_{L^2_x}\|\bfu\|^2_{W^{1,2}_x}\|\nabla\bfu\|_{W^{1,2}_x}\dt\\
&\leq \,c\int_{I^\ast}\|\bfu\|^2_{W^{1,2}_x}\big(\|\partial_t\bfu+\bfu\cdot\nabla\bfu\|_{L^2_x}+\|\bff\|_{L^2_x}+\|\partial_t\eta\|_{W^{3/2,2}_y}\big)\dt\\
&\leq \,c(\kappa)\bigg(\int_{I^\ast}\|\nabla\bfu\|^4_{L^2_x}\dt+1\bigg)+\kappa\int_{I^\ast}\big(\|\partial_t\bfu+\bfu\cdot\nabla\bfu\|_{L^2_x}^2+\|\bff\|^2_{L^2_x}+\|\partial_t\eta\|^2_{W^{3/2,2}_y}\big)\dt,
\end{aligned}
\end{align}
where the first part of the $\kappa$-term can be absorbed in the left-hand side of \eqref{eq:0302}. Note that we also used the estimate
\begin{align}\label{again}
\|\partial_t\eta\bfn\circ \bfvarphi_\eta^{-1}\|_{W^{3/2,2}_y}\lesssim\|\partial_t\eta\|_{W^{3/2,2}_y},
\end{align}
which is a consequence of \eqref{eq:apriorieta} and the definition ${\bfvarphi}_\eta={\bfvarphi}+ \eta{\bfn}$. In fact,
$\bfvarphi_\eta^{-1}$ is uniformly bounded in time in the space
of Sobolev multipliers on $W^{3/2,2}(\omega)$ by \eqref{eq:MSa}, \eqref{eq:MSb} and \eqref{eq:SMPhiPsi} (together with the assumption $\partial_y\bfvarphi_\eta\neq0$)
such that the transformation rule \eqref{lem:9.4.1} applies.
Similarly to \eqref{eq:reg1} we obtain
\begin{align*}
\mathrm{III}&\leq\,\kappa\int_{I^\ast}\|\partial_t\bfu+\nabla\bfu\cdot\nabla\bfu\|_{L^2_x}^2\dt
+c(\kappa)\int_{I^\ast}\|\nabla\bfu\|_{L^4_x}^2\|\mathscr F_\eta(\partial_t\eta\bfn)\|^2_{L^4_x}\dt\\
&\leq\,\kappa\int_{I^\ast}\|\partial_t\bfu+\nabla\bfu\cdot\nabla\bfu\|_{L^2_x}^2\dt\\&+c(\kappa)\int_{I^\ast}\|\nabla\bfu\|_{L^2_x}\|\nabla\bfu\|_{W^{1,2}_x}\|\mathscr F_\eta(\partial_t\eta\bfn)\|_{L^2_x}\|\mathscr F_\eta(\partial_t\eta\bfn)\|_{W^{1,2}_x}\dt.
\end{align*}
Due to  \eqref{eq:apriorieta}, \eqref{eq:feta} and Theorem \ref{thm:stokessteady} (which applies by \eqref{eq:apriorieta}, cf. Remark \ref{rem:stokes}) the second term can be estimated by
\begin{align*}
\int_{I^\ast}\|\nabla\bfu\|_{L^2_x}&\|\nabla\bfu\|_{W^{1,2}_x}\|\mathscr F_\eta(\partial_t\eta\bfn)\|_{W^{1,2}_x}\dt\\&\leq\,c(\kappa')\bigg(\int_{I^\ast}\|\nabla\bfu\|_{L^2_x}^4\dt+\int_{I^\ast}\|\mathscr F_\eta(\partial_t\eta\bfn)\|_{W^{1,2}_x}^4\dt\bigg)+\kappa'\int_{I^\ast}\|\nabla\bfu\|^2_{W^{1,2}_x}\dt\\
&\leq \,c(\kappa')\bigg(\int_{I^\ast}\|\nabla\bfu\|^4_{L^2_x}\dt+\int_{I^\ast}\|\partial_t\eta\|_{W^{1,2}_y}^4\dt\bigg)\\
&+\kappa'\int_{I^\ast}\big(\|\partial_t\bfu+\bfu\cdot\nabla\bfu\|_{L^2_x}^2+\|\bff\|_{L^2_x}^2+\|\partial_t\eta\|^2_{W^{3/2,2}_y}\big)\dt,
\end{align*}
where $\kappa'>0$ is arbitrary (recall also \eqref{again}).
Furthermore, we have by the trace theorem (recall that the boundary of $\Omega_\eta$ is Lipschitz continuous uniformly in time by \eqref{eq:apriorieta}), Sobolev's embedding and interpolation\footnote{Analogously to \eqref{again} one can show $\|\partial_t\eta\bfn\circ \bfvarphi_\eta^{-1}\|_{W^{1,2}_y}\lesssim\|\partial_t\eta\|_{W^{1,2}_y}$.}
\begin{align*}
\mathrm{II}&\leq\int_{I^\ast}\|\partial_t\eta\circ \bfvarphi_\eta^{-1}\|_{L^\infty(\partial\Omega_\eta)}\|\bfu\|_{W^{1,2}(\partial\Omega_\eta)}^2\dy\leq\,c\int_{I^\ast}\|\partial_t\eta\|_{W^{1,2}(\omega)}\|\bfu\|_{W^{3/2,2}(\Omega_\eta)}^2\dy\\
&\leq\,c\int_{I^\ast}\|\partial_t\eta\|_{W^{1,2}_y}\|\bfu\|_{W^{1,2}_x}\|\bfu\|_{W^{2,2}_x}\dy\\
&\leq\,c\int_{I^\ast}\|\partial_t\eta\|_{W^{1,2}_y}\|\bfu\|_{W^{1,2}_x}\big(\|\partial_t\bfu+\bfu\cdot\nabla\bfu\|_{L^2_x}+\|\bff\|_{L^2_x}+\|\partial_t\eta\|_{W^{3/2,2}_y}\big)\dy\\
&\leq \,c(\kappa)\bigg(\int_{I^\ast}\|\nabla\bfu\|^4_{L^2_x}\dt+\int_{I^\ast}\|\partial_t\eta\|_{W^{1,2}_y}^4\dt\bigg)\\&+\kappa\int_{I^\ast}\big(\|\partial_t\bfu+\bfu\cdot\nabla\bfu\|_{L^2_x}^2+\|\bff\|_{L^2_x}^2+\|\partial_t\eta\|^2_{W^{3/2,2}_y}\big)\dt.
\end{align*}
Note that we used again Theorem \ref{thm:stokessteady} and \eqref{again}. Similarly, it holds
\begin{align*}
\mathrm{IV}\leq\,\kappa \int_{I^\ast}\big(\|\partial_t\bfu+\bfu\cdot\nabla\bfu\|_{L^2_x}^2+\|\bff\|_{L^2_x}^2+\|\partial_t\eta\|^2_{W^{3/2,2}_y}\big)\dt
+c(\kappa)\bigg(\int_{I^\ast}\|\nabla\bfu\|_{L^2_x}^{4}\dt+\int_{I^\ast}\|\partial_t\eta\|_{W^{1,2}_y}^{4}\dt+1\bigg).
\end{align*}
For $\mathrm{V}$ we write
\begin{align*}
\mathrm{V}=-\int_{I^\ast}\int_{\Omega_\eta}\nabla\pi\cdot\mathscr F_\eta(\partial_t\eta\bfn)\nabla\bfu\dxt+\int_{I^\ast}\int_{\partial\Omega_\eta}\pi\,\mathscr F_\eta(\partial_t\eta\bfn)\nabla\bfu\,\bfn_\eta\circ\bfvarphi_\eta^{-1}\,\dd\mathcal H^1\dt,
\end{align*}
where
\begin{align*}
\int_{I^\ast}&\int_{\Omega_\eta}\nabla\pi\cdot\mathscr F_\eta(\partial_t\eta\bfn)\nabla\bfu\dxt\\
&\leq\int_{I^\ast}\|\nabla\pi\|_{L^2_x}\,\|\mathscr F_\eta(\partial_t\eta\bfn)\|_{L^4_x}\|\nabla\bfu\|_{L^4_x}\dt\\
&\leq\,c \int_{I^\ast}\|\nabla\pi\|_{L^2_x}\,\|\mathscr F_\eta(\partial_t\eta\bfn)\|_{W^{1/2,2}_x}\|\nabla\bfu\|_{L^2_x}^{\frac{1}{2}}\|\nabla\bfu\|_{W^{1,2}_x}^{\frac{1}{2}}\dt\\
&\leq\,c \int_{I^\ast}\|\nabla\pi\|_{L^2_x}\|\mathscr F_\eta(\partial_t\eta\bfn)\|^{1/2}_{L^{2}_x}\|\mathscr F_\eta(\partial_t\eta\bfn)\|_{W^{1,2}_x}^{1/2}\|\nabla\bfu\|_{L^2_x}^{\frac{1}{2}}\|\nabla\bfu\|_{W^{1,2}_x}^{\frac{1}{2}}\dt\\
&\leq \kappa\int_{I^\ast}\|\nabla\pi\|_{L^2_x}^{2}\dt+c(\kappa)\int_{I^\ast}\|\nabla\bfu\|_{L^2_x}^{4}\dt+c(\kappa)\int_{I^\ast}\|\partial_t\eta\|_{W^{1,2}_y}^{4}\dt+\kappa\int_{I^\ast}\|\nabla\bfu\|_{W^{1,2}_x}^{2}\dt
\end{align*}
by \eqref{eq:feta} and \eqref{eq:apriorieta}. Note that we used again interpolation, the trace theorem as well as Sobolev's embedding $W^{1/2,2}(\Omega_\eta)\hookrightarrow L^4(\Omega_\eta)$ (recall again that the boundary $\partial\Omega_\eta$ is uniformly Lipschitz by \eqref{eq:apriorieta}). As far as the second term in $\mathrm{IV}$ is concerned we estimate 
\begin{align*}
\int_{I^\ast}\int_{\partial\Omega_\eta}\pi\,\mathscr F_\eta(\partial_t\eta\bfn)\nabla\bfu\,\bfn_\eta\circ\bfvarphi_\eta^{-1}\,\dd\mathcal H^1\dt&\leq\int_{I^\ast}\|\pi\|_{L^4(\partial\Omega_\eta)}\|\mathscr F_\eta(\partial_t\eta\bfn)\|_{L^2(\partial\Omega_\eta)}\|\nabla\bfu\|_{L^4(\partial\Omega_\eta)}\dt\\
&\leq\,c\int_{I^\ast}\|\pi\|_{W^{1/4,2}(\partial\Omega_\eta)}\|\nabla\bfu\|_{W^{1/4,2}(\partial\Omega_\eta)}\dt\\
&\leq\,c\int_{I^\ast}\|\pi\|_{W^{3/4,2}(\Omega_\eta)}\|\nabla\bfu\|_{W^{3/4,2}(\Omega_\eta)}\dt\\
&\leq\,c\int_{I^\ast}\|\pi\|_{W^{1,2}(\Omega_\eta)}\|\nabla\bfu\|_{W^{1,2}(\Omega_\eta)}^{3/4}\|\nabla\bfu\|_{L^{2}(\Omega_\eta)}^{1/4}\dt\\
&\leq\kappa\int_{I^\ast}\|\pi\|^2_{W^{1,2}_x}\dt+\kappa\int_{I^\ast}\|\nabla\bfu\|_{W^{1,2}_x}^{2}\dt+c(\kappa)\int_{I^\ast}\|\nabla\bfu\|_{L^{2}_x}^{2}\dt
\end{align*}
using Sobolev's embedding $W^{1/4,2}(\partial\Omega_\eta)\hookrightarrow L^4(\partial\Omega_\eta)$, \eqref{eq:feta} and \eqref{eq:apriorieta} and the trace embedding $W^{3/4,2}(\Omega_\eta)\hookrightarrow W^{1/4,2}(\partial\Omega_\eta)$. Different to first term in $\mathrm{V}$ above we must estimate here also the $L^2$-norm of the pressure, for which we use
\eqref{eq:pressure} (noticing that $\int_{\omega}\bfn\cdot\bfn_\eta|\partial_y\bfvarphi_\eta|\dy$ is stricly positive by our assumption of non-degeneracy). We have
\begin{align*}
\int_{I^\ast}\|\pi\|^2_{W^{1,2}_x}\dt&\lesssim \int_{I^\ast}\|\nabla\pi\|^2_{L^{2}_x}\dt+\int_{I^\ast}c_\pi^2\dt\\
&\lesssim \int_{I^\ast}\|\nabla\pi\|^2_{L^{2}_x}\dt+\int_{I^\ast}\int_\omega|\partial_t^2\eta|^2\dy\dt+\int_{I^\ast}\int_\omega|g|^2\dy\dt\\&+\int_{I^\ast}\|\pi_0\|_{L^{2}(\partial\Omega_\eta)}^2\dt+\int_{I^\ast}\|\nabla\bfu\|_{L^{2}(\partial\Omega_\eta)}^2\dt,
\end{align*}
where the last term can be estimated by
\begin{align*}
\int_{I^\ast}\|\nabla\bfu\|_{W^{1/2,2}(\Omega_\eta)}^2\dt
&\lesssim\int_{I^\ast}\|\nabla\bfu\|_{W^{1,2}(\Omega_\eta)}\|\nabla\bfu\|_{L^{2}(\Omega_\eta)}\dt\\
&\lesssim\int_{I^\ast}\|\nabla\bfu\|_{W^{1,2}_x}^{2}\dt+\int_{I^\ast}\|\nabla\bfu\|_{L^{2}_x}^{2}\dt.
\end{align*}
Similarly, we infer from Poincar\'e's inequality
\begin{align*}
\int_{I^\ast}\|\pi_0\|_{L^{2}(\partial\Omega_\eta)}^2\dt
\lesssim\int_{I^\ast}\|\nabla\pi_0\|_{L^{2}_x}^{2}\dt+\int_{I^\ast}\|\pi_0\|_{L^{2}_x}^{2}\dt\lesssim \int_{I^\ast}\|\nabla\pi_0\|_{L^{2}_x}^{2}\dt=\int_{I^\ast}\|\nabla\pi\|_{L^{2}_x}^{2}\dt
\end{align*}
using that $(\pi_0)_{\Omega_{\eta}}=0$ by definition.
 As for the estimates for
$\mathrm{I}-\mathrm{III}$ the $\kappa$ terms in the above can now be controlled by means of Theorem \ref{thm:stokessteady}.
Combining everything, choosing $\kappa$ small enough and using \eqref{eq:apriorieta} once more we conclude that
\begin{align}\label{eq:reg2}
\begin{aligned}
\sup_{I^\ast}\int_{\Omega_\eta}|\nabla\bfu|^2\dx&+\int_{I^\ast}\int_{\Omega_\eta}|\partial_t\bfu+\bfu\cdot\nabla\bfu|^2
+\int_{I^\ast}\int_\omega|\partial_t^2\eta|^2\dx\dt\\
&\lesssim \int_{I^\ast}\|\nabla\bfu\|^4_{L^2_x}\dt+\int_{I^\ast}\|\partial_t\partial_y\eta\|_{L^{2}_y}^4\dt+\int_{I^\ast}\|\bff\|_{L^2_x}^2\dt+\|\nabla\bfu_0\|_{L^2_x}^2\\&+\sup_{I^\ast}\|\partial_y^3\eta\|_{L^2_y}^2\dt+\int_{I^\ast}\|\partial_t\partial_y^2\eta\|_{L^2_y}^2\dt+\|\partial_y\eta_1\|^2_{L^2_y}+1.
\end{aligned}
\end{align}
Testing the structure equation by $\partial_t\partial_y^2\eta$ yields
\begin{align*}
\frac{1}{2}\sup_{I^\ast}\int_\omega|\partial_t\partial_y\eta|^2\dy\dt&+\int_{I^\ast}\int_\omega|\partial_t\partial_y^2\eta|^2\dy+\frac{1}{2}\sup_{I^\ast}\int_\omega|\partial_y^3\eta|^2\dy\\&=\frac{1}{2}\int_\omega|\partial_y\eta_1|^2\dy+\frac{1}{2}\int_\omega|\partial_y^3\eta_0|^2\dy+\int_{I^\ast}\int_\omega(g+\bfF)\,\partial_t\partial_y^2\eta\dy\dt,
\end{align*}
where (arguing as in \eqref{again} to control $\bfF$ by $\bftau$ and arguing as for $\mathrm{I}-\mathrm{III}$ and $\mathrm{V}$ to estimate $\bftau$)
\begin{align*}
\int_{I^\ast}&\int_\omega\bfF\cdot\partial_t\partial_y^2\eta\dy\dt\leq\int_{I^\ast}\|\bfF\|_{W_y^{1/2,2}(\omega)}\|\partial_t\partial_y^2\eta\|_{W^{-1/2,2}(\omega)}\dt\leq\,c\int_{I^\ast}\|\bftau\|_{W^{1/2,2}(\partial\Omega_\eta)}\|\partial_t\eta\|_{W^{3/2,2}(\omega)}\dt\\
&\leq\,c\int_{I^\ast}\big(\|\partial_t\bfu+\bfu\cdot\nabla\bfu\|_{L^{2}_x}+\|\bff\|_{L^2_x}+\|\partial_t\eta\|_{W^{3/2,2}_y}+\|\partial_t^2\eta\|_{L^2_y}+\|g\|_{L^2_y}\big)\|\partial_t\eta\|_{W^{3/2,2}_y}\dt\\
&\leq\kappa\int_{I^\ast}\big(\|\partial_t\bfu+\bfu\cdot\nabla\bfu\|_{L^{2}_x}^2+\|\bff\|_{L^2_x}^2+\|\partial_t^2\eta\|_{L^2_y}^2+\|g\|_{L^2_y}^2\big)\dt+c(\kappa)\int_{I^\ast}\|\partial_t\eta\|_{W^{3/2,2}_y}^2\dt\\
&\leq\kappa\int_{I^\ast}\big(\|\partial_t\bfu+\bfu\cdot\nabla\bfu\|_{L^{2}_x}^2+\|\partial_t^2\eta\|_{L^2_y}^2+\|\partial_t\eta\|_{W^{2,2}_y}^2+\|\bff\|_{L^2_x}^2\big)\dt\\&+c(\kappa)\int_{I^\ast}\|\partial_t\eta\|_{W^{1,2}_y}^2\dt+c(\kappa)\int_{I^\ast}\|g\|_{L^{2}_y}^2\dt
\end{align*}
using interpolation in the last step.
Hence we have
\begin{align}\label{eq:reg3}
\begin{aligned}
\sup_{I^\ast}&\int_\omega|\partial_t\partial_y\eta|^2\dx\dt+\int_{I^\ast}\int_\omega|\partial_t\partial_y^2\eta|^2\dy+\sup_{I^\ast}\int_\omega|\partial_y^3\eta|^2\dy\\&\leq\kappa\int_{I^\ast}\big(\|\partial_t\bfu+\bfu\cdot\nabla\bfu\|_{L^{2}}^2+\|\partial_t\partial_y^2\eta\|_{L^{2}_y}^2+\|\partial_t^2\eta\|_{L^2_y}^2\big)\dt+ c(\kappa)\tilde C_0
\end{aligned}
\end{align}
by \eqref{eq:apriorieta}. Here we denoted
\begin{align*}
\tilde C_0=C_0+\int_\omega|\partial_y\eta_1|^2\dy+\int_\omega|\partial_y^3\eta_0|^2\dy.
\end{align*}
Combining \eqref{eq:reg2} and \eqref{eq:reg3} implies
\begin{align*}
\sup_{I^\ast}\int_{\Omega_\eta}&|\nabla\bfu|^2\dx+\int_{I^\ast}\int_{\Omega_\eta}|\partial_t\bfu+\bfu\cdot\nabla\bfu|^2
+\int_{I^\ast}\int_\omega|\partial_t^2\eta|^2\dy\dt\\
+\sup_{I^\ast}&\int_\omega|\partial_t\partial_y\eta|^2\dx\dt+\int_{I^\ast}\int_\omega|\partial_t\partial_y^2\eta|^2\dx+\sup_{I^\ast}\int_\omega|\partial_y^3\eta|^2\dy\\
&\leq\,c\bigg(\int_{I^\ast}\big(\|\nabla\bfu\|^2_{L^2_x}+\|\partial_t\partial_y\eta\|_{L^2_y}^2\big)^2\dt+1\bigg).
\end{align*}
By Gronwall's lemma, using that $\int_{I^\ast}\big(\|\nabla\bfu\|^2_{L^2_x}+\|\partial_t\partial_y\eta\|_{L^2_y}^2\big)\dt\leq\,c$ by \eqref{eq:aprioriu} and \eqref{eq:apriorieta} we obtain
\begin{align}\label{eq:reg4}
\begin{aligned}
\sup_{I^\ast}\int_{\Omega_\eta}&|\nabla\bfu|^2\dx+\int_{I^\ast}\int_{\Omega_\eta}|\partial_t\bfu+\bfu\cdot\nabla\bfu|^2\dx
+\int_{I^\ast}\int_\omega|\partial_t^2\eta|^2\dy\dt\leq\,c,\\
\sup_{I^\ast}\int_\omega&|\partial_t\partial_y\eta|^2\dy\dt+\int_{I^\ast}\int_\omega|\partial_t\partial_y^2\eta|^2\dy+\sup_{I^\ast}\int_\omega|\partial_y^3\eta|^2\dy\leq c.
\end{aligned}
\end{align}
We can now use the momentum equation and Theorem \ref{thm:stokessteady} again to obtain (recall \eqref{again})
\begin{align}\label{eq:reg5}
\begin{aligned}
\int_{I^\ast}&\int_{\Omega_\eta}|\nabla^2\bfu|^2\dx\dt+\int_{I^\ast}\int_{\Omega_\eta}|\nabla\pi|^2\dx\dt\\&\leq\,c\int_{I^\ast}\int_{\Omega_\eta}|\partial_t\bfu+\bfu\cdot\nabla\bfu|^2\dxt+\int_{I^*}\|\bff\|_{L^2_x}^2\dt
+\int_{I^\ast}\|\partial_t\eta\|_{W^{3/2,2}_y}^2\dt\leq\,c,
\end{aligned}
\end{align}
which completes the proof.
\end{proof}

\begin{proof}[Proof of Theorem \ref{thm:main}]
By Propositions \ref{prop2} and \ref{prop:local} we can obtain a strong solution in the interval $(T^\ast,2T^\ast)$ with initial data $\bfu(T^\ast),\eta(T^\ast),\partial_t\eta(T^\ast)$ with a corresponding regularity estimate.
This procedure can now be repated until the moving boundary approaches a self-intersection or degenerates (that is $\partial_y\bfvarphi_\eta(T,y)=0$ for some $y\in\omega$). In the latter case Theorem \ref{thm:stokessteady} is not applicable anymore, cf. Remark \ref{rem:stokes}.
\end{proof}


\section*{Compliance with Ethical Standards}\label{conflicts}
\smallskip
\par\noindent
{\bf Conflict of Interest}. The author declares that he has no conflict of interest.

\smallskip
\par\noindent
{\bf Data Availability}. Data sharing is not applicable to this article as no datasets were generated or analysed during the current study.

\end{document}